\newtheorem{teo}{Theorem}[section]
\newtheorem{propo}[teo]{Proposition}
\newtheorem{lema}[teo]{Lemma}
\newtheorem{coro}[teo]{Corollary}
\newtheorem{ej}[teo]{Example}
\newtheorem{defi}[teo]{Definition}
\newtheorem{nota}[teo]{Remark}
\newcommand{\mm}{\mathcal} 
\newcommand{\rr}{\mathscr} 
\newcommand{\bb}{\mathbb}  
\newcommand{\ff}{\mathfrak} 
\newcommand{\cuatro}{{\bf 4}}
\newcommand{\nene}{{\bf n}}
\newcommand{\bebe}{{\bf b}}
\newcommand{\logi}{{\bf L}}
\newcommand{\pos}[1]{{\bf Pos#1}}
\newcommand{\axi}[1]{{\bf(Ax#1)}}
\newcommand{\cele}{{\sf CPL}}
\newcommand{\bedun}{{\sf BD}} 
\newcommand{\bede}{{\sf BS4}} 
\newcommand{\elefi}{{\sf LFI1}} 
\newcommand{\bededos}{{\sf BD2}}
\newcommand{\cube}{{\sf QBD2}}
\newcommand{\hachedos}{{\sf HBD2}}
\newcommand{\hachedost}{{\sf HBD2}_T}
\newcommand{\hachecube}{{\sf HQBD2}}
\newcommand{\val}[1]{\mathop{{\sf Val}}(#1)}
\newcommand{\valbede}[1]{\|#1\|_{\cube}^\ff A}
\newcommand{\disdash}[1]{\sststile{#1}{}}
\newcommand{\dismodels}[1]{\sdtstile{#1}{}}
\newcommand{\martillo}[1]{\dststile{#1}{}}
\newcommand{\mabedos}{\disdash\hachedos}
\newcommand{\mabedost}{\disdash\hachedost}
\newcommand{\macube}{\disdash\hachecube}
\newcommand{\macuhen}{\sststile{\hachecube}{C}}
\newcommand{\mobedos}{\dismodels{\sf BD2}}
\newcommand{\mocube}{\dismodels{\sf QBD2}}
\newcommand{\nturn}[1]{\lefteqn{\raisebox{.2ex}{\hspace{.03mm} \big /}}\mathbin{#1}}
\newcommand{\pt}{\forall} 
\newcommand{\ex}{\exists} 
\newcommand{\existe}[1]{\no\pt #1\no\,}
\newcommand{\no}{\neg} 
\newcommand{\imp}{\rightarrow} 
\newcommand{\sii}{\leftrightarrow}
\newcommand{\negage}{\reflectbox{\rotatebox[origin=c]{90}{\ensuremath\neg}}\hspace{.8mm}} 
\newcommand{\nof}[1]{\ensuremath{{\sim}\,#1}}
\newcommand{\copi}[1]{\ensuremath{{\copyright}\,#1}}
\newcommand{\estar}{\ensuremath{\text{\ding{72}}}\,}
\newcommand{\bestar}{\ensuremath{\text{\ding{73}}}\,}
\newcommand{\emepe}{{\bf(MP)}}
\newcommand{\gen}{{\bf(Gen)}}
\newcommand{\for}{{\rm For}}
\newcommand{\ter}{{\rm Ter}}
\newcommand{\sent}{{\rm Sent}}
\newcommand{\bduno}[1]{#1\wedge\copi #1}
\newcommand{\bdebe}[1]{#1\wedge\no #1}
\newcommand{\bdene}[1]{\copi #1\wedge\no\copi #1}
\newcommand{\bdcero}[1]{\no #1\wedge\copi #1}
\newcommand{\brac}[1]{\langle #1\rangle}
\newcommand{\set}[1]{\{#1\}}
\newcommand{\vect}[1]{#1_1,\ldots, #1_n}
\newcommand{\libre}[1]{\mathop{{\sf free}}(#1)}
\newcommand{\ese}[1]{S(\ff #1)}
\newcommand{\sneg}{\ensuremath{{\sim}}}
\begin{document}

\title{On a four-valued logic of formal inconsistency  and formal underterminedness}

\author{Marcelo E. Coniglio}
\affil{\small University of Campinas (UNICAMP), Brazil}
\author{G. T. Gomez--Pereira}
\affil{\small Departamento de Matem\'atica, Universidad Nacional del Sur (UNS)-CONICET, Bah\'ia Blanca, Argentina.}
\author{Mart\'in Figallo}
\affil{\small Departamento de Matem\'atica and Instituto de Matemática (INMABB), Universidad Nacional del Sur (UNS), Bah\'ia Blanca, Argentina.
}

\date{}

\maketitle

\begin{abstract}
Belnap--Dunn's relevance logic, $\bedun$, was designed seeking a suitable logical device for dealing with multiple information sources which sometimes may provide inconsistent and/or incomplete pieces of information. $\bedun$ is a four-valued logic which is both paraconsistent and paracomplete. On the other hand, De and Omori while investigating what classical negation amounts to in a paracomplete and paraconsistent four-valued setting,  proposed the expansion $\bededos$ of the four valued Belnap--Dunn logic by a classical negation.

In this paper, we reintroduce the logic $\bededos$ by means of a primitive {\em weak consistency operator} $\copyright$. This approach allows us to state in a direct way that this is not only a Logic of Formal Inconsistency ({\bf LFI}) but also a Logic of Formal Underterminedness ({\bf LFU}). After presenting a natural Hilbert-style characterization of $\bededos$ obtained by means of twist-structures semantics, we propose a first-order version of $\bededos$ called $\cube$, with semantics based on an appropriate notion of partial structures. We show that in $\cube$, $\ex$ and $\pt$ are interdefinable in terms of the paracomplete and paraconsistent negation, and not by means of the the classical negation. Finally, a  Hilbert-style calculus  for  $\cube$ is presented, proving the corresponding and soundness and completeness theorems.

\end{abstract}

\section{Introduction}




In this paper, we are interested in an specific expansion of the four-valued logic known as Belnap-Dunn logic ($\bedun$ hereafter). $\bedun$ was originally developed by M. Dunn, and later stepped further to apply the logic to computer science by N. D. Belnap.
This is a logical system that is well--known for its many applications in different fields such as the development of languages allowing self-reference, semantics of logic programming and, mainly, it is a basic tool in the area of
{\em intelligent} database management or question-answering systems. Databases, especially large ones, have a great propensity to become inconsistent and/or incomplete: first, the information stored is usually obtained from different sources which might conflict with each other; second, the information obtained from each source, even if it is not obviously inconsistent, may hide contradictions.

In order to deal with this situation,  Belnap proposed his logic on four non-classical epistemic truth--values:  $1$ (true) and not false, $0$ (false and not true), these values are
to some extent identifiable with the classical ones, $\nene$ (neither true nor false), the
well-known "undetermined" value of some three-valued logics, and $\bebe$ (both true
and false) also called "overdetermined", the value corresponding to the situation
where several (probably independent) sources assign a different classical value to a
sentence. 

Let us denote by $\bf 4$ the set of truth values $\{1,\bebe, \nene, 0\}$ and consider the language $\{\vee, \wedge, \neg\}$. In a sense, we can say that the lattice $\mathcal{BD}=\langle \cuatro, \{\vee, \wedge, \neg\}\rangle$ given by 

\begin{center}
\begin{tikzpicture}[scale=.7]
\tikzstyle{every node}=[draw,circle,fill=black,inner sep=2pt]
  \node (one) at (0,4) [label=above:$1$] {};
  \node (b) at (-1.5,2) [label=left:$\bf n$] {};
   \node (a) at (1.5,2) [label=right:$\bf b$] {};
  \node (zero) at (0,0) [label=below:$ 0$] {};
  \draw (zero) -- (a) -- (one) -- (b) -- (zero);
\end{tikzpicture} \hspace{2cm}
\end{center}

and where $\neg 0=1$, $\neg 1=0$, $\neg \bebe =\nene$ and $\neg \nene= \bebe$, is an algebraic counterpart of $\bedun$. As it is well--known, we can consider two kinds of orders, truth order and information order, which is an interesting aspect of $\bedun$, and therefore, we can think of the four-element Belnap lattice as a bilattice.

Note here that the designated values are $1$ (truth only) and $\bebe$ (both truth
and falsity), and that $\neg$ is a paraconsistent negation. The values $0$ and $\nene$
are to be taken as falsity only and neither truth nor falsity, respectively. Thus, when
we speak of a sentence being true, we mean it takes either the value $1$ or $\bebe$,
and when we speak of a sentence being false, we mean it takes either the
value $\bebe$ or $0$. Indeed we take there to be only two genuine truth values, truth
and falsity, that are neither exhaustive nor exclusive. Thus, for instance, by
the assignment of the value $\bebe$ to $A$ we are to understand that $A$ is related
to both truth and falsity, not that there is some further truth value, both truth-
and-falsity, in relation to which $A$ stands. 

\

An interesting expansion of \bedun\, was proposed in \cite{FR2} by Font and Rius, namely the {\em Tetravalent Modal Logic} (${\cal TML}$ for short). In their work, these authors devoted to the study of the logic that preserves degrees of truth w. r. t. the class of tetravalent modal algebras (introduced by A. Monteiro and studied by others). The resulting logic turns out to be an extension of \bedun \, by means of a modal operator $\square$.  ${\cal TML}$ was later studied by Coniglio and M. Figallo in \cite{CF, MFThesis} under the perspective of paraconsistent logics. Recently, in \cite{MF} it was proposed a cut-free sequent calculus for ${\cal TML}$ as well as a natural deduction system for it with normalization of proofs.

On the other hand, De and Omori investigated the notion of classical negation from a non-classical perspective in \cite{DeOm}.
In particular, they aim to determine what classical negation amounts to in a paracomplete and paraconsistent four-valued setting. 
They considered different negations for $\bedun$ being each of them  ``classical'' in some respect. These authors conclude that the Boolean complementation (on the lattice $\mathcal{BD}$) is the only negation in this four--valued setting that fulfill all conditions they think a classical negation should verify. Finally, they gave a general semantic characterization of classical negation  consider different expansions of four valued Belnap--Dunn logic by classical negation. 

However, one of the negations considered in \cite{DeOm}, denoted $\neg_2$, is the one defined by $\neg_2 1=0=\neg_2 \bebe$, $\neg_2 \nene= \bebe$ and $\neg_2 0 =1$, and then $\bededos$ is defined as the expansion of $\bedun$ by this negation. In this work, we propose an expansion of $\bedun$ by means of a {\em weak consistency operator} which we denote by \ $\copi$. The resulting logic will be not only a {\em Logic of Formal Inconsistency} (LFI) but also a {\em Logic of Formal Underterminedness} (LFU). Moreover, this expansion turns out to be equivalent to the logic $\bededos$, but our approach will be in the context of the theory of LFI's and LFU's. 

\section{Preliminaries}   

Let $\rr L$ be a (first-degree or propositional) language over a given signature $\Theta$ and let $\for(\Theta)$ (or simply $\for$) be the set of all  well-formed formulas (or simply formulas) over $\rr L$. 
In this work, a logic $\logi$ is a pair $\logi = \brac{\for,\Vdash}$ where $\Vdash$ is a subset of  $2^\for\times \for$. As usual, lowercase Greek letters stand for formulas and uppercase Greek letters stand for sets of formulas. Besides, we shall write $\Gamma, \psi \Vdash \varphi$ instead of $\Gamma\cup\{\psi\}\Vdash\varphi$ and $\psi_1, \dots, \psi_n \Vdash \varphi$ instead of $\{\psi_1, \dots, \psi_n\} \Vdash \varphi$.

Recall that a logic $\logi$ \ is {\em Tarskian} if it satisfies: (1)  if $\alpha\in \Gamma$, then $\Gamma\Vdash\alpha$; (2) if $\Gamma\Vdash\alpha$ and $\Gamma\subseteq \Delta$, then $\Delta\Vdash \alpha$; and (3) if $\Delta\Vdash\alpha$ and $\Gamma\Vdash\beta$ for all  $\beta \in\Delta$, then $\Gamma\Vdash\alpha$. Besides, a logic $\logi$ is {\em compact} (or {\em finitary)} if it holds: 

\

{\bf (TDC)} \ \  if $\Gamma\Vdash\alpha$, there is a finite set  $\Gamma_0\subseteq \Gamma$ such that $\Gamma_0\Vdash\alpha$.

\

Given the logic $\logi=\langle\for, \Vdash\rangle$, a {\em theory} of $\logi$ is any subset of $\for$. A theory $\Gamma$ is {\em closed} if: $\Gamma \Vdash \alpha$ iff  $\alpha \in \Gamma$. A theory $\Gamma$ is {\em $\varphi$-saturated} in $\logi$ if $\Gamma \nVdash \varphi$, but $\Gamma,\alpha \Vdash\varphi$ for every $\alpha$ such that $\alpha \notin \Gamma$. It is easy to prove that any $\varphi$-saturated theory is closed. The following useful result will be used along this paper. A proof can be found, for instance, in~\cite[Theorem 22.2]{wojcicki1984lectures}):

\begin{teo}[Lindenbaum-\L os] \label{L-A-lemma} 
Let $\logi$ be a finitary logic, and let $\Gamma\cup \{\varphi\}$ be a set of formulas such that $\Gamma\nVdash \varphi$.  Then, there exists a set of formulas $\Delta$  such that  $\Delta$ is  $\varphi$-saturated in $\logi$ and $\Gamma \subseteq \Delta$.
\end{teo}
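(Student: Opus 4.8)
The plan is to obtain $\Delta$ as a maximal element of a suitable poset, via Zorn's Lemma, with the finitarity condition \textbf{(TDC)} doing the essential work. First I would let $\mathcal{P}$ be the collection of all theories $\Theta \subseteq \for$ with $\Gamma \subseteq \Theta$ and $\Theta \nVdash \varphi$, ordered by inclusion. By the hypothesis $\Gamma \nVdash \varphi$ the set $\mathcal{P}$ is nonempty (it contains $\Gamma$), so it suffices to produce a maximal element of $\mathcal{P}$ and check that it is $\varphi$-saturated.

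The one step that needs care is verifying the hypothesis of Zorn's Lemma: every chain $\mathcal{C} \subseteq \mathcal{P}$ has an upper bound in $\mathcal{P}$. I would take $\Theta^{*} = \bigcup \mathcal{C}$; plainly $\Gamma \subseteq \Theta^{*}$, so the point is to show $\Theta^{*} \nVdash \varphi$. Here is where \textbf{(TDC)} enters: if $\Theta^{*} \Vdash \varphi$, finitarity yields a \emph{finite} $\Theta_{0} \subseteq \Theta^{*}$ with $\Theta_{0} \Vdash \varphi$; since $\Theta_{0}$ is finite and $\mathcal{C}$ is linearly ordered by inclusion, $\Theta_{0} \subseteq \Theta'$ for a single $\Theta' \in \mathcal{C}$, and then monotonicity of $\Vdash$ (the Tarskian condition (2)) gives $\Theta' \Vdash \varphi$, contradicting $\Theta' \in \mathcal{P}$. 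Hence $\Theta^{*} \in \mathcal{P}$ is the desired upper bound, and Zorn's Lemma delivers a maximal $\Delta \in \mathcal{P}$.

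It then remains to see that this $\Delta$ is $\varphi$-saturated. By construction $\Gamma \subseteq \Delta$ and $\Delta \nVdash \varphi$. Given any formula $\alpha \notin \Delta$, the theory $\Delta \cup \{\alpha\}$ properly extends $\Delta$ while still containing $\Gamma$, so by maximality it cannot belong to $\mathcal{P}$; the only condition that can fail is $\Delta, \alpha \Vdash \varphi$. As $\alpha$ was arbitrary outside $\Delta$, this is exactly $\varphi$-saturation. I do not expect a genuine obstacle: this is the textbook Lindenbaum--\L os argument, and its sole delicate point is the chain step above — precisely the place where finitarity is indispensable, since for a non-finitary $\Vdash$ a union of ``$\varphi$-safe'' theories need not be $\varphi$-safe — together with the appeal to Zorn's Lemma, i.e.\ to the Axiom of Choice.
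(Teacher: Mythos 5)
Your proof is correct and is the standard Lindenbaum--\L os argument; the paper itself gives no proof of this theorem, deferring instead to W\'ojcicki's \emph{Lectures on Propositional Calculi}, where essentially the same Zorn's Lemma construction appears. You rightly flag the two load-bearing points: finitarity in the chain step, and monotonicity (Tarskian condition (2)), which the theorem's hypothesis leaves implicit but which is indeed needed to pass from $\Theta_0\Vdash\varphi$ to $\Theta'\Vdash\varphi$.
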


Let ${\bb P}$ be a propositional signature. An {\em algebra} for ${\bb P}$ is a pair 	$${\bf A} \  =  \ \brac{A,(\cdot)^\mm A}$$ 
where (1) $A$ is a non-empty set; and (2) $(\cdot)^\mm A$ is a function which maps every $n$-ary connective of  ${\bb P}$ into an  $n$-ary operation on $A$, i.e. if $c\in{\bb P}$ is an $n$-ary connective, then  $c^\mm A$ is a function  $c^\mm A: A^n\imp A$.

As usual, when there is no place for confusion, we shall use the same symbol to design a connective  $c$ and its interpretation $c^\mm A$. 

\begin{ej}\label{ejem1}
Consider the propositional signature  $\bb P_2 \ = \ \set{\wedge,\vee,\neg}$. Let ${\bf DM4}$ be the  element De Morgan algebra given by 
	$${\bf DM4} \ = \ \langle\cuatro,\bb P_2\rangle$$
where the operations are defined in Table \ref{alg-demorgan}.
\end{ej}

\begin{table}[H]
	\centering
	\begin{tabular}{|c||c|c|c|c|}
		\hline
		$\wedge$ & 1	& $\bebe$  & $\nene$  & 0  \\
		\hline
		\hline
		1	& 1  & $\bebe$  & $\nene$  & 0 \\
		\hline
		$\bebe$	& $\bebe$ & $\bebe$  & 0  & 0  \\
		\hline
		$\nene$	& $\nene$ & 0  & $\nene$  & 0 \\
		\hline
		0	& 0 & 0 & 0 & 0 \\
		\hline
	\end{tabular}
	\quad
	\begin{tabular}{|c||c|c|c|c|}
		\hline
		$\vee$	& 1  & $\bebe$ & $\nene$ & 0 \\
		\hline
		\hline
		1	& 1 & 1 & 1 & 1 \\
		\hline
		$\bebe$	& 1 & $\bebe$ & 1 & $\bebe$ \\
		\hline
		$\nene$	& 1 & 1 & $\nene$ & $\nene$ \\
		\hline
		0	& 1 & $\bebe$ & $\nene$ & 0 \\
		\hline
	\end{tabular}
	\quad 
	\begin{tabular}{|c||c|}
		\hline
		&  $\neg$ \\
		\hline
		\hline
		1	& 0 \\
		\hline
		$\bebe$	& $\bebe$ \\
		\hline
		$\nene$	& $\nene$ \\
		\hline
		0	& 1 \\
		\hline
	\end{tabular}
	\caption{Operations of ${\bf DM4}$}
          \label{alg-demorgan}
\end{table}

A {\em logic matrix} (or simply a {\em matrix}) over the propositional signature ${\bb P}$ is a triple $\mm M=\brac{{\mm V},\mm D,\  (\cdot)^\mm M}$ where  $\brac{{\mm V},(\cdot)^\mm M}$ is an algebra for ${\bb P}$ and $\mm D\subseteq {\mm V}$. 
The domain $\mm V$ of the algebra is called {\em the set of truth values} and the elements of $\mm D$ are the {\em designated values} of the matrix.
When there is no doubt about the signature we are working with, the logical matrix will be simply denoted by $\mm M=\brac{{\mm V}, \mm D}$.

Let $\mm M=\brac{{\mm V},\mm D}$ be a matrix over ${\bb P}$. An {\em $\mm M$-valuation} (or  {\em $\mm M$-morphism}) for ${\bb P}$ is a map
$v:\for({\bb P})\imp{\mm V}$ such that for every $n$-ary connective $*\in{\bb P}$  it holds:
	\begin{center}
		$v\left(*(\psi_1,\ldots,\psi_n)\right)=*\left(v(\psi_1),\ldots,v(\psi_n)\right)$.
	\end{center}	
We denote by $\val{\mm M}$ the set of all $\mm M$-valuations. Let $\psi\in\for({\bb P})$, $\Gamma\subseteq\for(\bb P)$ and $v$ an $\mm M$-valuation. We say that $v$ is an {\em $\mm M$-model} of $\psi$ (or that  $v$ \ $\mm M$-satisfies \ $\psi$) if $v(\psi)\in D$. We denote by ${\rm mod}_\mm M(\psi)$ the set of all $\mm M$-models of $\psi$; and we say that $\psi$ is {\em $\mm M$-satisfiable } if ${\rm mod}_\mm M(\psi)\neq\emptyset$.  Besides, $v$ is an {\em $\mm M$-model} of $\Gamma$ if $v\in{\rm mod}_\mm M(\alpha)$ for every $\alpha\in\Gamma$. 
We denote by ${\rm mod}_\mm M(\Gamma):=\bigcap_{\psi\in \Gamma} {\rm mod}_{\mm M}(\psi)$  the set of $\mm M$-models of $\Gamma$; and $\Gamma$ is {\em $\mm M$-satisfiable} if ${\rm mod}_\mm M(\Gamma)\neq\emptyset$.

\

Every matrix  $\mm M$ over a signature $\bb P$ induces a logic $\logi=\brac{\for(\bb P),\models_\logi}$ where
	\begin{center}
		$\Gamma\models_\logi \psi$ \quad iff \quad
		${\rm mod}_\mm M(\Gamma)\subseteq{\rm mod}_\mm M(\psi)$.
	\end{center}
If $\models_\logi\psi$, i.\! e. $v(\psi)\in\mm D$ for every $\mm M$-valuation $v$, we say that $\psi$ is {\em $\logi$-valid} or that it is a  {\em $\logi$-tautology}.
If $\logi$ is the logic induced by the matrix $\mm M$, eventually we shall call $\logi$-valuations to the  $\mm M$-valuations.
The following is a well--known fact.

\begin{teo}\label{carac-mat-fin} \footnote{Cf. 
		\cite[Theorem 3.2.2]{got}, 
		\cite[Proposition 1]{avronipl}, Cf. \cite{shoes71}.
	} 
	Let 	$\mm M=\brac{{\mm V},\mm D,\ {\bb P}}$ be a finite matrix over ${\bb P}$.  Then, the logic 
	$\logi =\brac{\for({\bb P}), \models_\mm M}$
	induced by $\mm M$ is Tarskian and finitary.
\end{teo}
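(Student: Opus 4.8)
The plan is to establish the two properties separately. First I would check that $\logi$ is Tarskian, which is pure bookkeeping with the defining equivalence $\Gamma\models_\mm M\psi$ iff ${\rm mod}_\mm M(\Gamma)\subseteq{\rm mod}_\mm M(\psi)$ together with ${\rm mod}_\mm M(\Gamma)=\bigcap_{\alpha\in\Gamma}{\rm mod}_\mm M(\alpha)$: reflexivity holds since $\alpha\in\Gamma$ forces ${\rm mod}_\mm M(\Gamma)\subseteq{\rm mod}_\mm M(\alpha)$; monotonicity holds since $\Gamma\subseteq\Delta$ gives ${\rm mod}_\mm M(\Delta)\subseteq{\rm mod}_\mm M(\Gamma)$; and the cut clause holds because, under its hypothesis, every $\mm M$-model of $\Gamma$ is an $\mm M$-model of each $\beta\in\Delta$, hence of $\Delta$, hence of $\alpha$. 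This part does not use finiteness of $\mm V$.

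The real content is the finitary property \textbf{(TDC)}, which I would prove by contraposition: assuming $\Gamma_0\not\models_\mm M\psi$ for every finite $\Gamma_0\subseteq\Gamma$, I want to produce a single $\mm M$-valuation $v$ that is a model of all of $\Gamma$ but not of $\psi$, so that $\Gamma\not\models_\mm M\psi$. The device is a compactness argument on the space of valuations. Endow $\mm V$ with the discrete topology; as $\mm V$ is \emph{finite} it is compact, so by Tychonoff's theorem the product space $\mm V^{\for(\bb P)}$ of all maps from formulas to $\mm V$ is compact. Within it, the set $\val{\mm M}$ of genuine $\mm M$-valuations is closed: it is the intersection, over all connectives $*$ and all tuples of formulas, of the sets $\{v : v(*(\psi_1,\dots,\psi_n))=*(v(\psi_1),\dots,v(\psi_n))\}$, each of which depends on only finitely many coordinates and is clopen because equality in a discrete space is clopen; hence $\val{\mm M}$ is itself compact. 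Furthermore, for each formula $\alpha$ the set ${\rm mod}_\mm M(\alpha)=\{v\in\val{\mm M} : v(\alpha)\in\mm D\}$ is the preimage of $\mm D$ under the $\alpha$-th coordinate projection, hence clopen in $\val{\mm M}$, and so is its complement $\val{\mm M}\setminus{\rm mod}_\mm M(\psi)$.

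Finally I would apply the finite intersection property to the family $\mathcal C=\{{\rm mod}_\mm M(\alpha):\alpha\in\Gamma\}\cup\{\val{\mm M}\setminus{\rm mod}_\mm M(\psi)\}$ of closed subsets of the compact space $\val{\mm M}$. Given finitely many members of $\mathcal C$, they are among ${\rm mod}_\mm M(\alpha_1),\dots,{\rm mod}_\mm M(\alpha_n)$ for certain $\alpha_i\in\Gamma$ and possibly $\val{\mm M}\setminus{\rm mod}_\mm M(\psi)$; the hypothesis $\{\alpha_1,\dots,\alpha_n\}\not\models_\mm M\psi$ supplies an $\mm M$-valuation lying in every one of them, so $\mathcal C$ has the finite intersection property. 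By compactness $\bigcap\mathcal C\neq\emptyset$, and any $v\in\bigcap\mathcal C$ is an $\mm M$-valuation satisfying every formula of $\Gamma$ but not $\psi$, which is exactly $\Gamma\not\models_\mm M\psi$. The one genuine obstacle is this last combination step — turning valuations, each handling a finite fragment of $\Gamma$, into one valuation handling all of $\Gamma$ — and it is precisely here that finiteness of $\mm V$ is indispensable (for infinite matrices the induced logic may fail to be finitary). If one wishes to avoid topology, the same conclusion can be reached by an ultrafilter argument: fix for each finite $\Gamma_0\subseteq\Gamma$ a valuation $v_{\Gamma_0}$ witnessing $\Gamma_0\not\models_\mm M\psi$, choose an ultrafilter $U$ on the directed poset of finite subsets of $\Gamma$ containing every final segment, and set $v(\chi)$ equal to the unique $a\in\mm V$ with $\{\Gamma_0 : v_{\Gamma_0}(\chi)=a\}\in U$ (which exists and is unique since $\mm V$ is finite); one then checks routinely that $v$ is a valuation with the desired properties.
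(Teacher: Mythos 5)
Your proof is correct. Note, however, that the paper does not actually prove this theorem: it is stated as a well-known fact with citations to Gottwald, Arieli--Avron--Zamansky and Shoesmith--Smiley, so there is no in-paper argument to compare against. Your two-part treatment is exactly what those sources do: the Tarskian clauses are immediate set-theoretic bookkeeping with ${\rm mod}_\mm M(\Gamma)=\bigcap_{\alpha\in\Gamma}{\rm mod}_\mm M(\alpha)$, and the finitary property is obtained by a compactness argument on the valuation space $\mm V^{\for(\bb P)}$ (equivalently, K\H{o}nig's lemma or an ultraproduct of the finite witnesses), with finiteness of $\mm V$ entering precisely where you say it must, namely in gluing the valuations witnessing $\Gamma_0\not\models_\mm M\psi$ for each finite $\Gamma_0$ into a single valuation. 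The one point worth stating explicitly in a written-up version is that a finite subfamily of your family $\mathcal C$ need not contain the set $\val{\mm M}\setminus{\rm mod}_\mm M(\psi)$, in which case the common element is still supplied by the same hypothesis; you gesture at this and it is harmless.
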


We say that a matrix $\cal M$ is {\em standard} if it has the matrix which induces the positive fragment of classical propositional logic  as a sub-matrix.

 \begin{defi}\footnote{Cf. \cite[Def. 2.7]{coni-es-gi-go}} \label{logi-maxi}
Let $\logi_1$ and $\logi_2$ be two standard logics  (Tarskian and structural) defined over the same propositional signature $\bb P$ such that $\martillo{\logi_1}\ \subsetneq \ \martillo{\logi_2}$. Then $\logi_1$ is said to be  {\em maximal} w.r.t. $\logi_2$ if, for every $\varphi\in\for(\bb P)$ such that  $\martillo{\logi_2}\varphi$ \ but \ $\not \martillo{\logi_1}\varphi$, it holds that the logic $\logi_1^+$, obtained from $\logi_1$ adding $\varphi$ as a theorem, coincides with $\logi_2$
\end{defi}

Let $\logi_1$ and $\logi_2$ be two standard logics  (Tarskian and structural) defined over the same propositional signature $\bb P$ such that $\martillo{\logi_1}\ \subsetneq \ \martillo{\logi_2}$ and let $\circledR$ be a (primitive or defined) connective.  It is called a {\em Derivability Adjustment Theorem} (DAT) to any of the following:
\begin{center}
	$\Gamma\martillo{\logi_2} \psi$ \quad iff \quad
	there exists $\Lambda\subseteq \for(\bb P)$ such that
	$\Gamma,\Lambda^\circledR\martillo{\logi_1}\psi$
 where $\Lambda^\circledR=\{\circledR \gamma :  \gamma\in \Lambda\}$.
\end{center}

\begin{quote}
	$\Gamma\martillo{\logi_2} \psi$ \quad iff \quad
	$\Gamma,\set{\circledR p_1,\dotsc,\circledR p_m}\martillo{\logi_1}\psi$, where $p_1,\dotsc,p_m$ are the propositional letters occurring  in $\Gamma\cup\set\psi$.
\end{quote}

\begin{lema}\footnote{Cf. \cite[Lemma 2.3]{coni-es-gi-go}}\label{submat-sublogica}
	Let $\logi_1$ and $\logi_2$ be the logics induced by the matrices $\brac{{\bf A_1},\mm D_1}$ and $\brac{{\bf A_2},\mm D_2}$, respectively, defined over the signature $\bb P$. If $\bf A_2$ is a subalgebra of $\bf A_1$ and $\mm D_2=\mm D_1\cap A_2$, then \ $\martillo{\logi_1} \ \subseteq \ \martillo{\logi_2}$.
\end{lema}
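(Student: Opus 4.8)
The plan is to prove the inclusion of consequence relations directly from the matrix semantics, using two elementary observations about how the submatrix $\brac{{\bf A}_2,\mm D_2}$ sits inside $\brac{{\bf A}_1,\mm D_1}$. Write $\mm M_1=\brac{{\bf A}_1,\mm D_1}$ and $\mm M_2=\brac{{\bf A}_2,\mm D_2}$ for the two matrices, and recall that $\Gamma\martillo{\logi_i}\psi$ means ${\rm mod}_{\mm M_i}(\Gamma)\subseteq{\rm mod}_{\mm M_i}(\psi)$.

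First I would observe that every $\mm M_2$-valuation is, after composing with the inclusion $A_2\hookrightarrow A_1$, an $\mm M_1$-valuation: since ${\bf A}_2$ is a subalgebra of ${\bf A}_1$, that inclusion is a $\bb P$-homomorphism, so if $v\colon\for(\bb P)\to A_2$ is an $\mm M_2$-morphism then the same map viewed into $A_1$ is an $\mm M_1$-morphism. Thus $\val{\mm M_2}\subseteq\val{\mm M_1}$ under this identification. Second, I would note that for such a $v$ and any formula $\psi$ we have $v(\psi)\in A_2$, so the hypothesis $\mm D_2=\mm D_1\cap A_2$ gives $v(\psi)\in\mm D_2$ iff $v(\psi)\in\mm D_1$; hence $v$ is an $\mm M_2$-model of $\psi$ exactly when it is an $\mm M_1$-model of $\psi$, and the same holds for a set $\Gamma$ in place of $\psi$. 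In short, ${\rm mod}_{\mm M_2}(\Gamma)={\rm mod}_{\mm M_1}(\Gamma)\cap\val{\mm M_2}$ for every $\Gamma\subseteq\for(\bb P)$.

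With these two facts the conclusion follows by chasing definitions. Suppose $\Gamma\martillo{\logi_1}\psi$ and let $v\in{\rm mod}_{\mm M_2}(\Gamma)$. By the first observation $v\in\val{\mm M_1}$, and by the second $v\in{\rm mod}_{\mm M_1}(\Gamma)$; hence $v\in{\rm mod}_{\mm M_1}(\psi)$, and the second observation again yields $v\in{\rm mod}_{\mm M_2}(\psi)$. Therefore ${\rm mod}_{\mm M_2}(\Gamma)\subseteq{\rm mod}_{\mm M_2}(\psi)$, i.e. $\Gamma\martillo{\logi_2}\psi$. I do not anticipate any genuine obstacle here: the argument is purely a matter of unwinding the matrix semantics. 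The only point requiring a little care is that designatedness must transfer in \emph{both} directions between the two matrices — the trivial inclusion $\mm D_2\subseteq\mm D_1$ handles $\mm M_2$-models being $\mm M_1$-models, but the converse step genuinely uses $\mm D_1\cap A_2\subseteq\mm D_2$, which is the other half of the hypothesis $\mm D_2=\mm D_1\cap A_2$.
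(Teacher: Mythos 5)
Your proof is correct and is the standard argument; the paper itself gives no proof of this lemma, only a citation to Lemma~2.3 of \cite{coni-es-gi-go}, so there is nothing to diverge from. Your identification of $\mm M_2$-valuations with $\mm M_1$-valuations via the subalgebra inclusion, together with the observation that designatedness transfers in both directions precisely because $\mm D_2=\mm D_1\cap A_2$ (and not merely $\mm D_2\subseteq\mm D_1$), is exactly what the result requires.
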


\begin{teo}\footnote{Cf. \cite[Th. 2.4]{coni-es-gi-go}}\label{logica-maxi-caract}
	Let $\logi_1$ and $\logi_2$ be the logics induced by the matrices $\brac{{\bf A_1},\mm D_1}$ y $\brac{{\bf A_2},\mm D_2}$, respectively, defined over the same propositional signature $\bb P$, such that $\bf A_2$ is a subalgebra of $\bf A_1$ and $\mm D_2=\mm D_1\cap A_2$. Suppose that the following conditions hold:
	\begin{enumerate}[(1).]
		\item $A_1=\set{0,1,a_1,\dotsc,a_k,a_{k+1},\dotsc,a_n}$ and $A_2=\set{0,1,a_1,\dotsc,a_k}$ are finite, $0\not\in \mm D_1$, $1\in\mm D_2$ and $\set{0,1}$ is a subalgebra of $A_2$.
		\item there are $\top(p)$, $\bot(p)\in \for(\bb P)$ such that $e(\top(p))=1$ and $e(\bot(p))=0$ for all  $\logi_1$-valuation $e$.
		\item 
		There exists $\alpha_j^i(p)\in\for(\bb P)$ such that 
		\begin{center}
			$e(p)=a_{ i}$
			\ only if \ 
			$e(\alpha_{j}^{ i}(p))=a_{j}$, 
		\end{center}
		for $k+1\leq i\leq n$, $1\leq j\leq n$, and $i\neq j$.
	\end{enumerate}
	Then,  $\logi_1$ is maximal w.r.t. $\logi_2$.
\end{teo}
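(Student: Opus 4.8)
The plan is to combine Lemma~\ref{submat-sublogica} with a ``one‑variable definability'' argument that is exactly what hypotheses (2) and (3) are engineered to supply. By Lemma~\ref{submat-sublogica} we already have $\martillo{\logi_1}\ \subseteq\ \martillo{\logi_2}$, and the strictness of this inclusion required by Definition~\ref{logi-maxi} is to be checked directly on the two matrices (exhibiting a formula taking designated values throughout ${\bf A_2}$ but not throughout ${\bf A_1}$), so the substance lies in the maximality clause. Fix, then, a formula $\varphi = \varphi(p_1,\dots,p_m)$ with $\martillo{\logi_2}\varphi$ but $\not\martillo{\logi_1}\varphi$, and let $\logi_1^+$ be the logic obtained by adding $\varphi$ as an axiom schema to $\logi_1$. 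Since $\logi_1$ is structural, $\logi_1^+$ is nothing but the matrix consequence of $\brac{{\bf A_1},\mm D_1}$ relativized to the theory $\Phi = \set{\sigma\varphi \ : \ \sigma \text{ a substitution}}$; concretely, $\Gamma\martillo{\logi_1^+}\psi$ iff every $\logi_1$-valuation designating all of $\Gamma\cup\Phi$ also designates $\psi$. The inclusion $\martillo{\logi_1^+}\ \subseteq\ \martillo{\logi_2}$ is then immediate, because each member of $\Phi$ is $\logi_2$-valid and, by Lemma~\ref{submat-sublogica}, $\logi_1\subseteq\logi_2$.

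For the reverse inclusion I would argue as follows. Suppose $\Gamma\martillo{\logi_2}\psi$ and let $e$ be a $\logi_1$-valuation designating every formula in $\Gamma\cup\Phi$; the goal is $e(\psi)\in\mm D_1$. The crux is the \emph{Claim} that $e$ takes values only in $A_2$; since $A_2$ is a subalgebra of ${\bf A_1}$, this is equivalent to $e(q)\in A_2$ for every propositional variable $q$. Suppose not, so $e(q) = a_i$ for some variable $q$ and some $i$ with $k+1\le i\le n$. Pick a $\logi_1$-valuation $v_0$ with $v_0(\varphi)\notin\mm D_1$ (one exists since $\not\martillo{\logi_1}\varphi$) and set $c_l := v_0(p_l)\in A_1$ for $1\le l\le m$. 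Using (2) and (3) one defines, for each $l$, a formula $\beta_l$ in the single variable $q$ with $e(\beta_l) = c_l$: take $\beta_l := \top(q)$ if $c_l = 1$, $\beta_l := \bot(q)$ if $c_l = 0$, $\beta_l := q$ if $c_l = a_i$, and $\beta_l := \alpha^{i}_{j}(q)$ if $c_l = a_j$ with $1\le j\le n$ and $j\ne i$; this exhausts $A_1 = \set{0,1,a_1,\dots,a_n}$. Letting $\sigma$ be the substitution $p_l\mapsto\beta_l$, we get $e(\sigma\varphi) = \varphi^{{\bf A_1}}(e(\beta_1),\dots,e(\beta_m)) = \varphi^{{\bf A_1}}(c_1,\dots,c_m) = v_0(\varphi)\notin\mm D_1$, contradicting $\sigma\varphi\in\Phi$ together with $e$ designating $\Phi$. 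This proves the Claim. Hence $e$ ranges inside $A_2$, the carrier of the submatrix $\brac{{\bf A_2},\mm D_2}$ with $\mm D_2 = \mm D_1\cap A_2$, so $e$ is a $\logi_2$-valuation; as $e$ designates $\Gamma$ and $\Gamma\martillo{\logi_2}\psi$, we obtain $e(\psi)\in\mm D_2\subseteq\mm D_1$. Therefore $\Gamma\martillo{\logi_1^+}\psi$, whence $\martillo{\logi_2}\ \subseteq\ \martillo{\logi_1^+}$ and $\logi_1^+$ coincides with $\logi_2$. Since $\varphi$ was arbitrary, $\logi_1$ is maximal w.r.t.\ $\logi_2$.

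The only delicate point is the Claim: conditions (2) and (3) are precisely what make it possible, starting from a valuation that ``escapes'' $A_2$ at even a single variable, to recover by substitution a formula realizing \emph{any} prescribed tuple of values of ${\bf A_1}$, and hence to refute $\varphi$ inside $\Phi$. Everything else is routine bookkeeping: the reduction of $\logi_1^+$ to a relativized matrix consequence rests only on structurality of $\logi_1$, and the passage from ``$e$ lives in $A_2$'' to ``$e$ is a $\logi_2$-valuation'' rests only on $\brac{{\bf A_2},\mm D_2}$ being a submatrix with $\mm D_2=\mm D_1\cap A_2$. One should, however, not overlook the separate (and easy) verification that $\martillo{\logi_1}\ \subsetneq\ \martillo{\logi_2}$, as demanded by Definition~\ref{logi-maxi}.
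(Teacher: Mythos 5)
Your argument is correct. The paper itself gives no proof of this theorem --- it is imported verbatim from Th.~2.4 of the cited reference \cite{coni-es-gi-go} --- and what you give is essentially the standard argument from that source: identify $\logi_1^+$ with the matrix consequence of $\langle{\bf A_1},\mm D_1\rangle$ relativized to the set $\Phi$ of substitution instances of $\varphi$, and then use $\top$, $\bot$ and the $\alpha_j^i$ to show that any valuation designating all of $\Phi$ while taking a value outside $A_2$ at even one variable could be converted, by substitution into a single variable, into a refutation of some instance of $\varphi$, so that every such valuation factors through the submatrix and $\martillo{\logi_2}\subseteq\martillo{\logi_1^+}$ follows.
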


\subsection{Logic of formal inconsistency and undeterminedness}

\begin{defi}\label{op-nega}
	Let $\logi=\brac{\for,\vdash_\logi}$ be a propositional Tarskian logic.
\begin{enumerate}[(a)]
		\item a unary connective $\rceil$ (primitive or defined) in the formal language $\rr L$ is a {\em negation} if there is a formula $\psi$ in the language such that
		\begin{center}
			$\psi\not\disdash\logi\negage\psi$ \quad and \quad
			$\negage\psi\not\disdash\logi\psi$
		\end{center}
		\item a binary connective  $\vee$  (primitive or defined) in the formal language $\rr L$ is a {\em  disjunction} if it  holds
\begin{center}
	$\Delta,\, \varphi\disdash\logi \chi$ \ and \ $\Delta,\, \psi\disdash\logi\chi$ \quad iff\quad 
	$\Delta,\varphi\vee\psi\disdash\logi\chi$
\end{center}
	\end{enumerate}
\end{defi}

\
	
Let  $\logi=\brac{\rr L,\vdash_\logi}$ be a Tarskian logic  with  negation connective $\rceil$ (either primitive or defined) in the formal language $\rr L$. We say that $\rceil$ is a {\em classical negation} if it holds.
	\begin{enumerate}[(i)]
		\item $\negage$ is a {\em classical negation} if $\logi$ has a {\em disjunction} $\vee$  (either primitive or defined) and it holds 
	\begin{center}
		\begin{tabular}{llcl}
		{\bf (tnd)} & $\disdash\logi\alpha\vee\ \negage\alpha$ &\hspace{2cm}& {\em tertium non datur} \\ 
		{\bf (ecq)} & $\alpha,\, \negage\alpha\disdash\logi
		\beta$ &
		\hspace{2cm}& {\em ex contradictiorio quodlibet} 
	\end{tabular}
	\end{center}
	\item $\negage$ is a  {\em paraconsistent negation} if it does not satisfies the principle (ECQ), i.e.
	$$\alpha,\negage\alpha\not \disdash\logi \beta$$
		\item $\negage$ is a {\em paracomplete negation} 
	if $\logi$ has a {\em disjunction} $\vee$ (either primitive or defined)  and it does not satisfies the principle (TND), i.e.
	$$\not \disdash\logi \beta\vee\no\beta$$
	\item $\negage$ is a {\em paranormal negation} 
	if $\logi$ has a {\em disjunction} $\vee$ (either primitive or defined)  and $\negage$ is paraconsistent and paracomplete.
	\end{enumerate}


\

According to what  Arieli et al. stated  in \cite[Theorem 1]{paradefinite-avron}, they are necessarily at least four truth values for a logic to enjoy the properties of paraconsistency and paracompleteness, simultaneously.
The paradigm of these four-valued logics, denoted $\bedun$, that was initially developed by  Dunn (cf. \cite{dunn}, \cite{dunn76}) as the semantic for the {\em first degree entailments} logics, being this applied in the realm of Computer Science by Belnap
in his prolific work (cf. \cite{Belnap1}, \cite{Belnap2}).

\begin{defi}\label{belnap-dunn}
	The logic $\bedun$ is the pair $\langle\for(\bb P_2),\models_\bedun\rangle$ where consequence operator $\models_\bedun$ is determined by the matrix $\mm M_\bedun \ = \ \langle {\bf DM4},\mm D_\bedun\rangle$,  ${\bf DM4}$ is the algebra of Example \ref{ejem1} and the set of designated values is $\mm D_\bedun = \set{1,\bebe}$.	
\end{defi}

\begin{teo} The primitive connective $\neg$ of $\bedun$ is a paranormal negation.
\end{teo}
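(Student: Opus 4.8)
The plan is to verify the three clauses of Definition~\ref{op-nega}(a) and Definition~\ref{op-nega}(iii) directly from the truth tables in Table~\ref{alg-demorgan} and the fact that $\mm D_\bedun = \set{1,\bebe}$. Concretely, I must exhibit: (i) a formula witnessing that $\neg$ is a \emph{negation} in the sense of Definition~\ref{op-nega}(a), (ii) a valuation showing that $\emepe$-style explosion fails, i.e. $\alpha, \neg\alpha \not\models_\bedun \beta$, so $\neg$ is paraconsistent, and (iii) a valuation showing that $\not\models_\bedun \beta \vee \neg\beta$, so $\neg$ is paracomplete (noting first that $\vee$ is indeed a disjunction in the sense of Definition~\ref{op-nega}(b), which follows from the standard characterization of $\vee$ in $\bedun$). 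Putting (ii) and (iii) together yields that $\neg$ is paranormal.

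First I would handle the ``negation'' clause: take $\psi = p$ a propositional variable, and choose an $\mm M_\bedun$-valuation $v$ with $v(p) = \nene$. Then $\neg\nene = \nene$, so $v(\neg p) = \nene \notin \mm D_\bedun$ while... actually to separate $p$ from $\neg p$ I want a valuation where exactly one of them is designated. Take $v(p) = 1$: then $v(p) = 1 \in \mm D_\bedun$ but $v(\neg p) = \neg 1 = 0 \notin \mm D_\bedun$, witnessing $p \not\models_\bedun \neg p$; and take $w(p) = 0$: then $w(\neg p) = 1 \in \mm D_\bedun$ but $w(p) = 0 \notin \mm D_\bedun$, witnessing $\neg p \not\models_\bedun p$. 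Hence $\neg$ is a negation.

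For paraconsistency, take $v(p) = \bebe$ and $v(q) = 0$ for distinct variables $p,q$. Then $v(p) = \bebe \in \mm D_\bedun$ and $v(\neg p) = \neg\bebe = \bebe \in \mm D_\bedun$, so $v$ is a model of $\set{p, \neg p}$, but $v(q) = 0 \notin \mm D_\bedun$; hence $p, \neg p \not\models_\bedun q$, so (ECQ) fails. For paracompleteness, take $w(p) = \nene$; then $w(\neg p) = \neg\nene = \nene$, and from Table~\ref{alg-demorgan} we read $\nene \vee \nene = \nene$, so $w(p \vee \neg p) = \nene \notin \mm D_\bedun$; hence $\not\models_\bedun p \vee \neg p$, so (TND) fails. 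Before invoking clause~(iii), I would also record that $\vee$ satisfies the disjunction condition of Definition~\ref{op-nega}(b) — this is the well-known proof-by-cases property of $\bedun$ and can be checked from the matrix by a routine case split on the value of $v(\varphi \vee \psi) \in \mm D_\bedun$ iff $v(\varphi) \in \mm D_\bedun$ or $v(\psi) \in \mm D_\bedun$, which is immediate from the $\vee$ table. Combining, $\neg$ is simultaneously paraconsistent and paracomplete, hence paranormal.

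There is no real obstacle here: every step reduces to inspecting a four-by-four table and evaluating a one-variable formula at a single truth value, and the only mildly non-mechanical point is checking that $\vee$ qualifies as a disjunction, which is the standard ``$v(\varphi\vee\psi)$ is designated iff $v(\varphi)$ or $v(\psi)$ is designated'' fact for the Belnap lattice and follows at once from the table. The proof is therefore a short verification rather than an argument requiring any genuine idea.
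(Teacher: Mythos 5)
Your proof is correct and takes essentially the same route as the paper's: the same witnessing valuations ($v(p)=\bebe$ to refute {\bf (ecq)} and $v(p)=\nene$ to refute {\bf (tnd)}), read off from Table~\ref{alg-demorgan} with designated values $\set{1,\bebe}$. You additionally verify the preliminary clauses of Definition~\ref{op-nega} (that $\neg$ is a negation and that $\vee$ is a disjunction), which the paper leaves implicit; this is harmless extra care and does not change the argument.
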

\begin{proof} Let $p,q$ two propositional letters and let $e$ be a $\bedun$-valuation such that  $v(p)=\bebe$ and  $v(q)=\nene$, then  $v\in{\sf mod}_\bedun(p\wedge\no p)$. However,  $v\not\in{\sf mod}_\bedun(q)$; and therefore $\neg$ does not satisfy {\bf (ecq)}. On the other hand, if we choose a propositional letter $p$ and a valuation $v$ such that $v(p)=\nene$, it is clear that $v\not\in {\sf mod}_\bedun(p\vee \no p)$ and hence $\no$ rejects {\bf (tnd)}.
\end{proof}





\begin{defi}\footnote{Cf. \cite{car-con-ro-20}}\label{lfiu}
Let  $\logi$ be a finitary Tarskian logic with a  disjunction connective $\vee$ and a paranormal negation connective $\rceil$. We say that $\logi$ is a  {\em Logic of Formal Inconsistency and Undeterminedness} 
	({\bf LFIU}) for short) if there are unary connectives \ $\circ$ \ and \ $\star$  \ (either primitive or defined) such that
	\begin{multicols}{2}
		\begin{enumerate}
			\item[(a)] $\alpha,\circ\alpha\nturn{\martillo\logi}\beta$, for some $\alpha, \beta$,
			\item[(b)] $\rceil\alpha,\circ\alpha\nturn{\martillo\logi}\beta$, for some $\alpha, \beta$,
			\item[(c)] $\alpha,\rceil\alpha,\circ\alpha\martillo\logi\beta$, for every $\alpha, \beta$,
		\end{enumerate}
		\begin{enumerate}
			\item[(d)] $\nturn{\martillo\logi}\alpha\vee\star\alpha$, for some $\alpha, \beta$,
			\item[(e)] $\nturn{\martillo\logi}\rceil\alpha\vee\star\alpha$, for some $\alpha, \beta$,
			\item[(f)] $\:{\martillo\logi}\:\alpha\vee\rceil\alpha\vee\star\alpha$, for every $\alpha$.
		\end{enumerate}
	\end{multicols}
\end{defi}


On the other hand, in \cite{bscuatro},  Omori and Waragai introduced the four-valued logic $\bede$ pursuing to modify the three-valued logic $\elefi$ (see Section~\ref{sectBB2})  in such a way that it can deal information that not only is inconsistent  but also incomplete. In terms of matrix, this modification corresponds to a move from three-valued matrix to four valued-matrix. In this way, they introduced a four-valued matrix whose induced logic is an extension of $\bedun$ but in the language of $\elefi$ where the operation $\circ$ is defined in such a way that the matrix of $\elefi$, $\mm M_\elefi$, is a sub-matrix of the new one.

\

Let $\bb P_3$ be the propositional signature $\bb P_3=\{\wedge, \vee,  \imp, \neg, \circ, \sim \}$. Now, consider the algebra ${\bf A}_\bede \ = \ \langle\cuatro,\bb P_3\rangle$ whose reduct  $\langle\cuatro,\set{\wedge,\vee,\neg}\rangle$ is the algebra {\bf DM4} and the operations $\imp$, $\sim$ and $\circ$ are given in the Table \ref{alg-bede}; and let 
$\mm M_\bede \ = \ \langle {\bf A}_\bede,\mm D_\bede\rangle$ be the matrix whose set of designated values is $D_\bede=\{1, \bebe\}$. 

\begin{table}[ht]
	\centering
	\begin{tabular}{|c|c|c|c|c|}
		\hline
		$\imp$ & 1 & $\bebe$ & $\nene$ & 0 \\
		\hline
		\hline
		1 & 1 & $\bebe$ & $\nene$ & 0 \\
		\hline
		$\bebe$ & 1 & $\bebe$ & $\nene$ & 0 \\
		\hline
		$\nene$ & 1 & 1 & 1 & 1 \\
		\hline
		0 & 1 & 1 & 1 & 1 \\
		\hline
	\end{tabular}
\quad
\begin{tabular}{|c|c|c|}
	\hline
	& $\circ$ & $\sim$ \\
	\hline
	\hline
	1 & 1 & 0 \\
	\hline
	$\bebe$ & 0 & 0 \\
	\hline
	$\nene$ & 0 & 1 \\
	\hline
	0 & 1 & 1 \\
	\hline
\end{tabular}	
	\caption{Operations of ${\bf A}_\bede$}\label{alg-bede}
\end{table}

Then
\begin{defi}\label{BS4}\footnote{Cf. \cite[Definition 5]{bscuatro}}
$\bede \ = \ \langle\for(\bb P_3),\models_\bedun\rangle$ is the logic induced by the matrix $\mm M_\bede$.
\end{defi}

The simplicity and naturalness of $\bede$ is evidenced by the fact that this same logic was discovered almost simultaneously by different authors from different points of view and diverse contexts. For instance, in his dissertation (see \cite{kleison}) da Silva presents the logic ${\bf BD_\sim}$ that turns out to be equivalent to $\bede$, but his motivations are in the context of logic programming.
On the other hand, in \cite{DeOm} De and Omori  present $\bede$ under the name of ${\sf BDe}$ as one possible extension of $\bedun$ by means of some kind of a classical negation. Other systems equivalent to $\bede$ that can be found in the literature are: \ 
${\bf B_4^{\rightarrow}}$ (cf. \cite{odi}), \ 
${\bf BD}\triangle$ (cf. \cite{sanobde}) \  and \ 
${\sf E_{\sf fde}^\circledast}$ (cf. \cite{ciuni-carrara-18} ).

\

Let $\bullet \alpha := \neg{\circ} \alpha$. Then:

\begin{teo} $\bede$ is a {\bf LFIU} where $\circ$ and $\bullet$ are the respective operators of inconsistency and undeterminedness.
\end{teo}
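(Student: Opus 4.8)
The plan is to verify the six clauses of Definition~\ref{lfiu} directly against the finite matrix $\mm M_\bede$, after disposing of the structural hypotheses. First, since $\mm M_\bede$ is a finite matrix, Theorem~\ref{carac-mat-fin} already yields that $\bede$ is Tarskian and finitary. Next I would check that $\vee$ is a disjunction in the sense of Definition~\ref{op-nega}(b): reading off the table of $\vee$ in ${\bf DM4}$, one sees that $x\vee y\in\mm D_\bede$ if and only if $x\in\mm D_\bede$ or $y\in\mm D_\bede$, and from this the biconditional of Definition~\ref{op-nega}(b) follows at once by inspecting the $\mm M_\bede$-models of $\Delta\cup\set{\varphi\vee\psi}$. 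Finally, that $\neg$ is a paranormal negation for $\bede$ is obtained by transporting the witnesses already used for $\bedun$: the valuation with $v(p)=\bebe$ and $v(q)=\nene$ shows that {\bf (ecq)} fails, and the valuation with $v(p)=\nene$ shows that {\bf (tnd)} fails, these being legitimate $\mm M_\bede$-valuations because an $\mm M_\bede$-valuation may assign to a propositional variable any of the four values.

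The next step is to record the table of the defined operator $\bullet\alpha=\neg{\circ}\alpha$, obtained by composing the $\circ$-column of Table~\ref{alg-bede} with the $\neg$-column of Table~\ref{alg-demorgan}: it gives $\bullet 1=0$, $\bullet\bebe=1$, $\bullet\nene=1$ and $\bullet 0=0$. With the tables of $\circ$, $\neg$, $\vee$ and $\bullet$ at hand the {\bf LFI}-clauses are routine. For (a) and (b) I would simply produce valuations: taking $v(p)=1$ forces $v(\circ p)=1$, so with $v(q)=0$ one gets $p,\circ p\not\models_\bede q$; taking $v(p)=0$ forces $v(\neg p)=v(\circ p)=1$, so again with $v(q)=0$ one gets $\neg p,\circ p\not\models_\bede q$. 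For (c) the relevant remark is sharper than ordinary explosiveness: there is \emph{no} $\mm M_\bede$-valuation $v$ with $v(\alpha)$, $v(\neg\alpha)$ and $v(\circ\alpha)$ simultaneously designated, since $v(\alpha)=1$ forces $v(\neg\alpha)=0$, $v(\alpha)=\bebe$ forces $v(\circ\alpha)=0$, and $v(\alpha)\in\set{\nene,0}$ is already undesignated; hence the set $\set{\alpha,\neg\alpha,\circ\alpha}$ is unsatisfiable and (c) holds vacuously for every $\beta$.

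For the {\bf LFU}-clauses I would combine the $\bullet$-table with the $\vee$-table. For (f), a four-case computation shows that $\alpha\vee\neg\alpha\vee\bullet\alpha$ evaluates to $1$ for each value of $\alpha$ (for instance, at $\bebe$ it is $\bebe\vee\bebe\vee 1=1$, and at $\nene$ it is $\nene\vee\nene\vee 1=1$), so this formula is a $\bede$-tautology. For (d), the valuation with $v(p)=0$ gives $v(p\vee\bullet p)=0\vee 0=0\notin\mm D_\bede$, whence $p\vee\bullet p$ is not $\bede$-valid; for (e), the valuation with $v(p)=1$ gives $v(\neg p\vee\bullet p)=0\vee 0=0\notin\mm D_\bede$, whence $\neg p\vee\bullet p$ is not $\bede$-valid. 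That exhausts Definition~\ref{lfiu}. I do not expect a genuine obstacle here: each clause reduces to a finite inspection of the four-element matrix. The one point that deserves care is clause (c): one should resist arguing through a derived explosion law and instead observe the stronger fact that the triple of premises already fails to have any $\mm M_\bede$-model.
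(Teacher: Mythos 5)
Your proposal is correct and follows essentially the same route as the paper: a direct finite-matrix verification of the six clauses of Definition~\ref{lfiu}, using the very same witnessing valuations ($v(p)=1, v(q)=0$ for clauses (a) and (e), and $v(p)=0, v(q)=0$ for clauses (b) and (d)), the same unsatisfiability observation for clause (c), and the same tautology check for clause (f). The only difference is that you spell out some details the paper leaves implicit (the Tarskian/finitary hypotheses, the disjunction property of $\vee$, and the explicit table for $\bullet$), which is harmless.
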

\begin{proof} Since the underlying algebra of the matrix of $\bedun$ is a reduct of the underlying algebra of the matrix of $\bede$ we have that the negation $\negage$ is paranormal in  $\bede$.
Let $p$ and $q$ be two propositional letters and let $e_1$ and $e_2$ two $\bede$-valuations such that $e_1(p)=1$, $e_1(q)=0$, $e_2(p)=0$ and $e_2(q)=0$ we can see that 
	${\rm mod}_\bede(\set{p,\circ p})\not\subseteq {\rm mod}_\bede(q)$,
	${\rm mod}_\bede(\set{\no p,\circ p})\not\subseteq {\rm mod}_\bede(q)$,
	$e_2\not\in {\rm mod}_\bede(\set{p\vee\bullet p})$, 
	$e_1\not\in {\rm mod}_\bede(\set{\no p\vee\bullet p})$. On the other hand ${\rm mod}_\bede(\set{p,\no p,\circ p})\break =\emptyset$ and ${\rm mod}_\bede(p\vee\no p\vee\bullet p) = \val{\mm M_\bede}$. Therefore, all conditions of Definition~\ref{lfiu} are fulfilled.
\end{proof}

\begin{teo} \label{teoLETs} In $\bede$, the operator $\circ$ recovers {\bf (ecq)} and {\bf (tnd)}.
\end{teo}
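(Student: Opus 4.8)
The claim is that in $\bede$ the consistency operator $\circ$ recovers classical reasoning, in the sense made precise by the notion of a Derivability Adjustment Theorem (and more concretely by the gentzen-style ``recovery'' formulas): adding $\circ\alpha$ as an extra hypothesis restores both \textbf{(ecq)} and \textbf{(tnd)} for the paranormal negation $\neg$. Concretely I would prove the two statements
\begin{center}
$\alpha,\neg\alpha,\circ\alpha \models_\bede \beta$ \qquad and \qquad $\circ\alpha \models_\bede \alpha\vee\neg\alpha$,
\end{center}
where the first has in fact already been observed in the proof that $\bede$ is an \textbf{LFIU} (condition (c)), so the real content is the ``paracomplete recovery'' direction, together with noting that these are the precise formal renderings of ``$\circ$ recovers \textbf{(ecq)} and \textbf{(tnd)}''.

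\emph{First step.} Since $\bede$ is induced by the finite matrix $\mm M_\bede = \langle {\bf A}_\bede,\mm D_\bede\rangle$ with $\mm D_\bede=\{1,\bebe\}$, everything reduces to a finite inspection of the truth tables in Table~\ref{alg-bede} (together with the $\wedge,\vee,\neg$ tables of ${\bf DM4}$ in Table~\ref{alg-demorgan}). For \textbf{(ecq)}-recovery I would argue: if $v$ is a $\bede$-valuation with $v(\alpha),v(\neg\alpha),v(\circ\alpha)$ all designated, then from the $\circ$-column $v(\circ\alpha)\in\{1,\bebe\}$ forces $v(\alpha)\in\{1,0\}$; combined with $v(\alpha)$ designated this gives $v(\alpha)=1$, whence $v(\neg\alpha)=0\notin\mm D_\bede$, a contradiction. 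Hence ${\rm mod}_\bede(\{\alpha,\neg\alpha,\circ\alpha\})=\emptyset$ and the entailment to an arbitrary $\beta$ holds vacuously. For \textbf{(tnd)}-recovery I would check that for every $v$ with $v(\circ\alpha)$ designated we have $v(\alpha)\in\{1,0\}$, and then read off from the $\vee$-table that $1\vee\neg 1 = 1\vee 0 = 1$ and $0\vee\neg 0 = 0\vee 1 = 1$, so $v(\alpha\vee\neg\alpha)=1\in\mm D_\bede$ in both cases; thus $\circ\alpha\models_\bede\alpha\vee\neg\alpha$.

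\emph{Packaging.} Having these two facts, I would phrase the conclusion uniformly: $\alpha,\neg\alpha\models_{\cele}\beta$ and $\models_{\cele}\alpha\vee\neg\alpha$ become valid in $\bede$ once every relevant formula $\gamma$ is decorated with $\circ\gamma$, i.e. the pair $(\bede,\bedun)$ — or $(\bede,\cele)$ — admits a Derivability Adjustment Theorem with respect to $\circ$, in the sense of the definition recalled just before Lemma~\ref{submat-sublogica}. If the paper wants the DAT statement in full generality (decorating all propositional variables occurring in $\Gamma\cup\{\psi\}$) rather than just the two schemata above, I would add the standard argument: a valuation sending every variable to a value in $\{1,0\}$ behaves classically on all of $\mm M_\bede$ (the sub-matrix on $\{1,0\}$ is the classical one, by standardness), and conversely every classical valuation extends such a $v$; then the $\circ p_i$ decorations exactly cut down the $\bede$-models to those behaving classically on the relevant variables.

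\emph{Expected obstacle.} There is no deep obstacle here — the argument is a bounded table check — so the only thing requiring care is \emph{bookkeeping}: making sure the sub-matrix on $\{1,0\}$ really is closed under all of $\wedge,\vee,\imp,\neg,\circ,\sim$ (it is: inspect each table) and isomorphic to the classical two-valued matrix with $\mm D=\{1\}$, and being precise about which variant of the recovery statement is being claimed (the two schemata, versus the full DAT). If the intended reading is the stronger, uniform DAT, the mild subtlety is that $\sim$ and $\imp$ must also be shown to act classically on $\{1,0\}$, which again is immediate from Table~\ref{alg-bede}.
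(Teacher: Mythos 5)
Your proposal is correct and follows essentially the same route as the paper: the paper likewise reduces \textbf{(tnd)}-recovery to the observation that a designated value of $\circ\alpha$ forces $v(\alpha)\in\{0,1\}$, whence $v(\alpha\vee\neg\alpha)=1$, and handles \textbf{(ecq)}-recovery by noting it holds in any \textbf{LFI} extending classical logic (your explicit table check of ${\rm mod}_\bede(\{\alpha,\neg\alpha,\circ\alpha\})=\emptyset$ is just a direct verification of the same fact, already recorded as condition (c) in the \textbf{LFIU} proof). The additional DAT packaging you sketch is not needed for the theorem as stated, but it is harmless.
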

\begin{proof} In every {\bf LFI} that extends the classical logic it holds that the consistency operator recovers  {\bf (ecq)}. Suppose that  $v(\circ \alpha)=1$, then $v(\alpha)\in\set{0,1}$. Therefore,  $\circ\alpha\dismodels\bede\alpha\vee\no\alpha$.
\end{proof}

\begin{nota} \label{obsLETs}
In a series of papers, Carnielli and Rodrigues introduced a family of logics designed to deal with paraconsistency and  paracompleteness from an epistemic and informational perspective (see, for instance, \cite{car:rod:17}). These logics are called {\em em Logics of Evidence and Truth} ({\bf LET}s for short). {\bf LET}s are expansions of  $\bedun$ by adding a (primitive or not) {\em classicality operator} $\circ$ which simultaneously   recovers  {\bf (ecq)} and {\bf (tnd)}. In that sense, the logic  $\bede$ is a  {\bf LET}. As we shall see, $\bededos$ to be considered in the following sections is also a {\bf LET}.
\end{nota}

\section{The sentential logic $\bededos$} \label{sectBB2}

The logic $\bededos$ was introduced in \cite{DeOm} as an expansion of $\bedun$ by means of a classical negation which satisfies certain requirements. However, in our research, we were looking for an extension $\bedun$ (in the context of the LFIs) by means of a consistency operator $\copi$ that might take values different from 0 and 1. In this way, we arrive to a systems that is equivalent to $\bededos$ following a different path. This is due to the fact that the classical negation proposed by  Omori et al. and our consistency operator are inter-definable. Consequently, we can assert that $\bededos$ is a LFU with an underterminedness operator definable from a consistency operator.

\

Let us consider the propositional signature $\bb P_T=\set{\vee, \wedge,\imp,\neg,\copi}$.
\begin{defi}\label{BD2}\footnote{Cf. \cite{DeOm}. It was originally defined over a different signature.}
The logic $\bededos \ = \ \langle\for(\bb P_T),\models_\bededos\rangle$ is the logic induced by the matrix $\mm M_\bededos \ = \ 
\langle {\bf A}_\bededos,\mm D_\bededos\rangle$
where  ${\bf A}_\bededos \ = \ \langle\cuatro,\bb P_T\rangle$ is the algebra whose reduct
$\langle\cuatro,\set{\wedge,\vee,\neg}\rangle$ is  {\bf DM4}, 
the operations $\imp$ and $\copi$ are given in the Table \ref{alg-bededos} and the set of designated values is $\mm D_\bededos = \set{1,\bebe}$.

\begin{table}[h]
	\centering
	\begin{tabular}{|c|c|c|c|c|}
		\hline
		$\imp$ & 1 & $\bebe$ & $\nene$ & 0\\
		\hline
		\hline
		1  & 1 & $\bebe$ & $\nene$ & 0 \\
		\hline
		$\bebe$  & 1 & $\bebe$ & $\nene$ & 0 \\
		\hline
		$\nene$  & 1 & $\bebe$ & 1 & $\bebe$ \\
		\hline
		0  & 1 & 1 & 1 & 1 \\
		\hline
	\end{tabular}
\quad
\begin{tabular}{|c|c|}
	\hline
	& $\copi$ \\
	\hline
	\hline
	1&  1\\
	\hline
	$\bebe$&0\\
	\hline
	$\nene$&$\bebe$ \\
	\hline
	0&1 \\
	\hline
\end{tabular}
	\caption{Implication and consistency operator of ${\bf A}_\bededos$}\label{alg-bededos}
\end{table}
\end{defi}

Consider now the operators \ $\nof$, \ $\circ$, \ $\estar$ and \ $\bestar$:

\begin{center}

\begin{tabular}{|c|c|c|c|c|}
	\hline
	& $\sim$ & $\circ$ & $\estar$ & $\bestar$ \\
	\hline
	\hline
	1& 0 & 1 & 0 & 1\\
	\hline
	$\bebe$&0&0&0 &1\\
	\hline
	$\nene$&$\bebe$&0&1&0 \\
	\hline
	0&1&1&0&1 \\
	\hline
\end{tabular}
\end{center}

It is immediate to see that they  are definable  in $\bededos$ as follows: 

\begin{align*}
\nof x &\ := \ \copi x\wedge\neg(x\wedge\copi x), & 
\bestar x &\ := \ \copi\copi x, \\ 
\estar x &\ := \ \neg\,\copi\copi x,&
\circ x &\ := \ \copi x\wedge\bestar x.
\end{align*}

Moreover, $\vee$ and $\to$ can be defined as follows:\\

$x\vee y \ := \ \no(\no x\wedge\no y)$ \ and \ $x\rightarrow y \ := \ \no \big( \no (\copi x\wedge\no(x\wedge\copi x))
\ \wedge \ \no y \big)$.

\begin{propo}\label{bededos-stand}
	$\bededos$ is sound w.r.t the positive propositional logic, i.e. in $\bededos$ the following axiom schemata are valid
\begin{center}
			\begin{tabular}{llll}
		\pos 1 & $\alpha\imp(\beta\imp\alpha)$&
		\pos 7 & $\alpha\imp(\beta\vee\alpha)$\\[2mm]
		\pos 2 & $(\alpha\imp(\beta\imp\gamma))\imp$ &
		\pos 8 & $(\alpha\imp\gamma)\imp((\beta\imp\gamma)\imp$\\[2mm]
		&$((\alpha\imp\beta)\imp(\alpha\imp\gamma))$&
		&$(\alpha\vee\beta\imp\gamma))$ \\[2mm]
		\pos 3 & $\alpha\wedge\beta\imp\alpha$&
		\pos 9 & $\alpha\vee (\alpha\imp\beta)$\\[2mm]
		\pos 4 & $\alpha\wedge\beta\imp\beta$\\[2mm]
		\pos 5 & $\alpha\imp(\beta\imp(\alpha\wedge\beta))$\\[2mm]
		\pos 6 & $\alpha\imp(\alpha\vee\beta)$\\[2mm]
	\end{tabular} 
\end{center}	
\end{propo}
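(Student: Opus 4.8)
The plan is to exploit the fact that the designated set $\mm D_\bededos=\{1,\bebe\}$ behaves \emph{classically} with respect to $\wedge$, $\vee$ and $\imp$, which are the only connectives occurring in $\pos1$--$\pos9$. Let $\dos$ now denote the two-element Boolean algebra $\langle\{0,1\},\set{\wedge,\vee,\imp}\rangle$ with its usual operations, and define $h\colon\cuatro\to\{0,1\}$ by $h(x)=1$ if $x\in\mm D_\bededos$ and $h(x)=0$ otherwise, that is, $h(1)=h(\bebe)=1$ and $h(\nene)=h(0)=0$. The first step is to check, by direct inspection of the three tables (the $\wedge$- and $\vee$-tables of ${\bf DM4}$ and the $\imp$-table of ${\bf A}_\bededos$ in Table~\ref{alg-bededos}), that $h$ is a homomorphism from the $\set{\wedge,\vee,\imp}$-reduct of ${\bf A}_\bededos$ onto $\dos$; equivalently, that $\mm D_\bededos$ is a (proper, nonempty) prime lattice filter of the De Morgan lattice reduct and that $x\imp y\in\mm D_\bededos$ if and only if $x\notin\mm D_\bededos$ or $y\in\mm D_\bededos$.

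The second step lifts $h$ to formulas. Given a $\bededos$-valuation $v$, let $v^h$ be the classical $\dos$-valuation determined by $v^h(p)=h(v(p))$ for every propositional variable $p$. A straightforward induction on the structure of $\varphi$, using the homomorphism property established in the first step for the inductive clauses, shows that $h(v(\varphi))=v^h(\varphi)$ for every formula $\varphi$ built from propositional variables using only $\wedge$, $\vee$ and $\imp$.

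For the final step, note that each instance of $\pos1$--$\pos9$ is a formula of exactly this kind and is a tautology of the $\set{\wedge,\vee,\imp}$-fragment of $\cele$; hence, for every $\bededos$-valuation $v$ we obtain $v^h(\varphi)=1$, and therefore $h(v(\varphi))=1$, i.e.\ $v(\varphi)\in\mm D_\bededos$. Thus $\models_\bededos\varphi$ for every such instance, which is the desired conclusion. As a byproduct, since $\{0,1\}$ is moreover closed under $\neg$ and $\copi$, it is a subuniverse of ${\bf A}_\bededos$ whose $\set{\wedge,\vee,\imp}$-reduct is precisely the matrix of positive $\cele$, so $\mm M_\bededos$ is standard in the sense of the previous section.

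The only genuine computation is the table inspection in the first step; everything afterwards is a routine induction together with the appeal to classical tautologyhood. The one point to keep in mind is that $h$ fails to be a homomorphism with respect to $\copi$ (hence with respect to the full signature $\bb P_T$), so the argument essentially uses that $\pos1$--$\pos9$ mention only $\wedge$, $\vee$ and $\imp$ — which they do. Alternatively, the statement can of course be verified by the brute-force evaluation of the (finitely many, at most three-variable) schemata over $\cuatro$, the homomorphism observation being merely a way to organise that check.
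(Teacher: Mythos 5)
Your proof is correct and is essentially the paper's own argument: the paper disposes of the claim in one line by saying that the $\set{\wedge,\vee,\imp}$-reduct of $\mm M_\bededos$ is a \emph{standard} matrix in the Rosser--Turquette sense, which is precisely the fact you verify explicitly, namely that the characteristic map of $\mm D_\bededos$ is a homomorphism onto the two-element Boolean algebra for $\wedge$, $\vee$ and $\imp$, so that classical positive tautologies are preserved. One small point of precision: instances of $\pos1$--$\pos9$ may contain $\neg$ and $\copi$ inside $\alpha,\beta,\gamma$, so either run your induction on the schema itself (treating the substituted subformulas as atoms valued by $h\circ v$) or invoke structurality of matrix consequence to reduce to the variable instances; with that understood, the argument is complete.
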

\begin{proof} This follows from the fact that the sub-matrix
	$\langle\cuatro,\mm D_\bededos,\set{\vee,\wedge,\imp}\rangle$ of
	$\mm M_\bededos$ is a standard matrix (cf. \cite[p. 25-26]{rostur},\, 	
	\cite[p. 326]{malino-many},\,
	\cite[p. 30]{got},\, 
	\cite[Definition 10]{paradefinite-avron}
	).	
\end{proof}

\begin{propo}\label{negclasica}
The connective $\sim$ is a classical negation for $\bededos$.
\end{propo}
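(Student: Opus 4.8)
The plan is to verify the two defining properties of a classical negation from the definition given just before Definition~\ref{belnap-dunn}: that $\sim$ is a negation in the sense of Definition~\ref{op-nega}(a), that $\bededos$ possesses a disjunction $\vee$ in the sense of Definition~\ref{op-nega}(b), and that the pair $(\sim,\vee)$ satisfies \textbf{(tnd)} and \textbf{(ecq)}. Since $\bededos$ is induced by the finite matrix $\mm M_\bededos$, all the consequence relations in question reduce to finite checks over the four-element algebra ${\bf A}_\bededos$, using the truth table for $\sim$ displayed in the excerpt ($\sim 1 = 0$, $\sim\bebe = 0$, $\sim\nene = \bebe$, $\sim 0 = 1$) together with the tables for $\vee$, $\wedge$, $\neg$ inherited from {\bf DM4}.

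First I would record that $\vee$ is a genuine disjunction for $\bededos$. By Proposition~\ref{bededos-stand} the axioms \pos6, \pos7, \pos8 hold and modus ponens is sound for $\imp$ (being the classical implication on the designated/undesignated partition), so the proof-by-cases property
\[
	\Delta,\varphi \dismodels{\bededos}\chi \ \text{ and } \ \Delta,\psi\dismodels{\bededos}\chi \quad\text{iff}\quad \Delta,\varphi\vee\psi\dismodels{\bededos}\chi
\]
follows; alternatively, and more directly, one checks semantically that a valuation $v$ satisfies $\varphi\vee\psi$ iff it satisfies $\varphi$ or $\psi$, which is immediate from the $\vee$-table of {\bf DM4} once one notes that $a\vee b\in\mm D_\bededos$ iff $a\in\mm D_\bededos$ or $b\in\mm D_\bededos$. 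This also shows $\sim$ is a negation in the sense of Definition~\ref{op-nega}(a): take $\psi$ a propositional variable $p$ and valuations $v,v'$ with $v(p)=1$, $v'(p)=0$; then $v(p)=1\in\mm D_\bededos$ while $v(\sim p)=0\notin\mm D_\bededos$, so $p\not\dismodels{\bededos}\sim p$, and symmetrically $v'$ witnesses $\sim p\not\dismodels{\bededos} p$.

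Next I would verify \textbf{(tnd)}: $\dismodels{\bededos}\alpha\vee\sim\alpha$. It suffices to check, for each $a\in\cuatro$, that $a\vee(\sim a)\in\mm D_\bededos=\{1,\bebe\}$. Running through the four cases: $1\vee\sim 1 = 1\vee 0 = 1$; $\bebe\vee\sim\bebe=\bebe\vee 0=\bebe$; $\nene\vee\sim\nene=\nene\vee\bebe=1$; $0\vee\sim 0=0\vee 1=1$. All designated, so \textbf{(tnd)} holds. Then \textbf{(ecq)}: $\alpha,\sim\alpha\dismodels{\bededos}\beta$. Here one checks that there is no $a\in\cuatro$ with both $a\in\mm D_\bededos$ and $\sim a\in\mm D_\bededos$: for $a\in\{1,\bebe\}$ we have $\sim a = 0\notin\mm D_\bededos$, and for $a=\nene$ we have $\nene\notin\mm D_\bededos$ already, while $a=0$ is also undesignated. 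Hence $\mathrm{mod}_{\mm M_\bededos}(\{p,\sim p\})=\emptyset$, and vacuously this is contained in $\mathrm{mod}_{\mm M_\bededos}(q)$ for any $q$, giving \textbf{(ecq)}.

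There is no real obstacle here; the statement is essentially a table-lookup once the right definitions are assembled, and the only mild subtlety is making sure one invokes the correct clauses of Definition~\ref{op-nega} and the classical-negation definition — in particular that $\bededos$ must be shown to \emph{have} a disjunction before \textbf{(tnd)} and the paracomplete/classical dichotomy even make sense. I would present the argument as: (1) $\vee$ is a disjunction and $\sim$ is a negation (both as above); (2) the two-line computation for \textbf{(tnd)}; (3) the emptiness-of-models observation for \textbf{(ecq)}; and conclude that $\sim$ is a classical negation for $\bededos$. If one prefers a syntactic flavour for (2)–(3), one can instead note that $\sim$ on the two-element subalgebra $\{0,1\}$ is Boolean complement and that $\bededos$ is standard, but the semantic check is shorter and self-contained.
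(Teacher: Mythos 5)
Your proof is correct and follows essentially the same route as the paper's: a direct check over the four-element matrix that $\mathrm{mod}_\bededos(\{\alpha,\nof\alpha\})=\emptyset$ (giving \textbf{(ecq)} vacuously) and that $\alpha\vee\nof\alpha$ is designated under every valuation (giving \textbf{(tnd)}). You additionally verify the preconditions that $\nof$ is a negation and $\vee$ a disjunction in the sense of Definition~\ref{op-nega}, which the paper leaves implicit but which does not change the substance of the argument.
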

\begin{proof}
It is clear, from the definition of $\nof$ (see Table \ref{alg-bededos}) that for all $\alpha\in\for(\bb P)$, it holds
${\rm mod}_\bededos(\set{\alpha,\nof\alpha})=\emptyset$ and ${\rm mod}_\bededos(\alpha\vee\nof\alpha)=\val{\mm M_\bededos}$.
Therefore,  $\nof$ satisfies {\bf (ecq)} and {\bf (tnd)}.
\end{proof}

\begin{nota} In $\bededos$, the negation $\nof$ is a classical one in the sense that it validates {\em ex contradictione quodlibet} \, $\alpha,\nof\alpha\mobedos\beta$ \,
	and {\em tertium non datur} \, $\mobedos\alpha\vee\nof\alpha$.
\end{nota}

\begin{propo}\label{bededos-LFIU}
The logic $\bededos$ is a {\bf  LFIU}. Moreover,  $\copi$ \ is the consistency operator and \ $\estar$ \ is the operator of undeterminedness. Besides, the operator $\bestar$ recovers {\em tertium non datur}.
\end{propo}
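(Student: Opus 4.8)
The plan is to reduce the whole statement to a finite check on the four-element matrix $\mm M_\bededos$. First I would dispatch the hypotheses of Definition~\ref{lfiu}: $\bededos$ is Tarskian and finitary by Theorem~\ref{carac-mat-fin}, since it is induced by a finite matrix; its $\vee$ is a disjunction because $\set{1,\bebe}$ is a \emph{prime} up-set of the lattice $\mathcal{BD}$ (so $v(\varphi\vee\psi)$ is designated iff $v(\varphi)$ or $v(\psi)$ is), whence the equivalence of Definition~\ref{op-nega}(b) follows from monotonicity and cut — alternatively one may quote Proposition~\ref{bededos-stand}; and $\neg$ is a paranormal negation, witnessed exactly as in $\bedun$ by a valuation with $v(p)=\bebe$ (which makes $p$ and $\neg p$ designated but not $q$ when $v(q)=0$, refuting \textbf{(ecq)}) and one with $v(p)=\nene$ (which makes $p\vee\neg p$ take the non-designated value $\nene$, refuting \textbf{(tnd)}).

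With the setting fixed, the core is to verify the six clauses of Definition~\ref{lfiu} with $\circ:=\copi x$-operator and $\star:=\estar$, reading everything off the tables for $\copi x$, $\estar$ and the De~Morgan reduct $\mathbf{DM4}$. For the non-consequence clauses I would produce single valuations: for (a) take $v(\alpha)=1$, so $v(\alpha)=v(\copi\alpha)=1$ are designated while $v(\beta)=0$ is not; for (b) take $v(\alpha)=0$, so $v(\neg\alpha)=v(\copi\alpha)=1$ while $v(\beta)=0$; for (d) take $v(\alpha)=0$, so $v(\alpha\vee\estar\alpha)=0\vee0=0$; for (e) take $v(\alpha)=1$, so $v(\neg\alpha\vee\estar\alpha)=0\vee0=0$. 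For clause (c) I would note that $v(\alpha)$ and $v(\neg\alpha)$ both being designated forces $v(\alpha)=\bebe$ (the only designated value fixed by $\neg$), and then $v(\copi\alpha)=\copi\bebe=0$ is not designated, so ${\rm mod}_\bededos(\set{\alpha,\neg\alpha,\copi\alpha})=\emptyset$. For clause (f) a four-case inspection of $v(\alpha)\in\set{1,\bebe,\nene,0}$ gives $v(\alpha\vee\neg\alpha\vee\estar\alpha)\in\set{1,\bebe}$ throughout (respectively $1\vee0\vee0=1$, $\bebe\vee\bebe\vee0=\bebe$, $\nene\vee\nene\vee1=1$, $0\vee1\vee0=1$).

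Finally, for the last assertion I must show that $\bestar$ recovers \emph{tertium non datur}, i.e.\ that ${\rm mod}_\bededos(\bestar\alpha)\subseteq{\rm mod}_\bededos(\alpha\vee\neg\alpha)$: from the table, $v(\bestar\alpha)$ is designated exactly when $v(\alpha)\neq\nene$, and in each of the three remaining cases $v(\alpha\vee\neg\alpha)$ is designated ($1\vee0=1$, $\bebe\vee\bebe=\bebe$, $0\vee1=1$), which is precisely what is needed. I do not expect any genuine difficulty here; the only point requiring care is keeping the truth tables straight — in particular using throughout the De~Morgan negation of Table~\ref{alg-demorgan}, for which $\neg\bebe=\bebe$ and $\neg\nene=\nene$ — and checking that the case analyses for clauses (c), (f) and for the final claim really are exhaustive.
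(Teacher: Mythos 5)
Your proposal is correct and follows essentially the same route as the paper's own proof: exhibiting explicit witnessing valuations for the non-consequence clauses and reading the universal clauses (c), (f) and the $\bestar$-claim directly off the truth tables. The only cosmetic differences are that you verify clause (c) with $\copi\alpha$ exactly as Definition~\ref{lfiu} states it (the paper writes $\copi\no\alpha$, which is harmless here since $\no\bebe=\bebe$), and that you spell out the preliminary hypotheses (disjunction, paranormality, finitariness) which the paper leaves largely implicit.
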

\begin{proof}
The negation $\no$ is paracomplete since it coincides with the negation of $\bedun$. Let $p$ and $q$ two propositional letters and consider the $\bededos$-valuation $v$ such that $v(p)=1$ and $v(q)=0$. Then, it is clear that ${\rm mod}_\bededos(\set{\copi p,p})\not\subseteq {\rm mod}_\bededos(q)$ and $v\not\in{\rm mod}_\bededos(\no p\vee\estar p)$ (or equivalently, ${\rm mod}_\bededos(\bestar p)\not\subseteq {\rm mod}_\bededos(\no p)$). On the other hand, considering the $\bededos$-valuation $w$ such that $w(p)=0$ and $w(q)=1$ we can see that  
${\rm mod}_\bededos(\set{\copi p,\no p})\not\subseteq {\rm mod}_\bededos(q)$ and
$v\not\in{\rm mod}_\bededos(p\vee\estar p)$ (or equivalently, ${\rm mod}_\bededos(\bestar p)\not\subseteq {\rm mod}_\bededos(p)$).

Finally, since for all $\alpha\in\for$ we have ${\rm mod}_\bededos(\set{\alpha,\no\alpha,\copi\no\alpha})=\emptyset$ and ${\rm mod}_\bededos(\alpha\vee\no\alpha\vee\estar\alpha)=\val{\mm M_\bededos}$ (or ${\rm mod}_\bededos(\bestar\alpha)\subseteq{\rm mod}_\bededos(\alpha\vee\no\alpha)$), the assertion holds.  
\end{proof}

\begin{propo}
$\bededos$ is finitary.
\end{propo}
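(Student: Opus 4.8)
The plan is to read off from Definition~\ref{BD2} that $\bededos$ is, by construction, the logic $\langle \for(\bb P_T), \models_\bededos\rangle$ induced by the matrix $\mm M_\bededos = \langle {\bf A}_\bededos, \mm D_\bededos\rangle$, and then to observe that this matrix is finite. Indeed, the underlying algebra ${\bf A}_\bededos$ has as carrier the four-element set $\cuatro = \{1,\bebe,\nene,0\}$, and the signature $\bb P_T = \set{\vee,\wedge,\imp,\neg,\copi}$ is itself finite, so every connective of $\bb P_T$ is interpreted as an honest finitary operation on a finite set. Hence $\mm M_\bededos$ is a finite matrix in the sense of Theorem~\ref{carac-mat-fin}.

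Next I would simply invoke Theorem~\ref{carac-mat-fin}: the logic induced by any finite matrix over a signature is Tarskian and finitary. Instantiating it with $\mm M = \mm M_\bededos$ gives at once that $\models_\bededos$ satisfies {\bf (TDC)}, i.e.\ whenever $\Gamma \models_\bededos \alpha$ there is a finite subset $\Gamma_0 \subseteq \Gamma$ with $\Gamma_0 \models_\bededos \alpha$. That is precisely the assertion that $\bededos$ is finitary, so the proof is complete.

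There is essentially no obstacle here; the statement is an immediate corollary of an already-cited fact. The only point worth making explicit, for the reader's comfort, is that the defined connectives $\sim$, $\circ$, $\estar$, $\bestar$ (and the defined $\vee$, $\imp$ in the alternative presentation) play no role in the argument: finitariness is a property of the consequence relation $\models_\bededos$ as induced by the finite matrix on $\for(\bb P_T)$, and it is insensitive to which of the connectives one chooses to treat as primitive and which as defined.
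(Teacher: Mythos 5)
Your proof is correct and is exactly the paper's argument: the paper's own proof consists of the single line ``From Theorem~\ref{carac-mat-fin}'', relying on the fact that $\mm M_\bededos$ is a finite matrix. Your additional remarks about the finiteness of the carrier and the irrelevance of which connectives are primitive are accurate but not needed.
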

\begin{proof}
From Theorem \ref{carac-mat-fin}.
\end{proof}

\begin{nota} By Theorem~\ref{teoLETs} adapted to $\bededos$, the operator $\circ$ is a classicality operator, in the sense that it recovers {\bf (ecq)} and {\bf (tnd)}. This means that $\bededos$ is a {\bf LET}, recall Remark~\ref{obsLETs}. The relationship of   $\bededos$ with other {\bf LET}s studied in the literature is a subject that deserves future research.
\end{nota}

The three-valued paraconsistent logic $\elefi$, introduced in~\cite{car:mar:dea:00}, was independently proposed by various authors. Among othters, it is equivalent (up to signature) to the well-known paraconsistent logic {\bf J3} Introduced by D'Ottaviano and da Costa in~\cite{dot:dac:70} (for more details about this topic consult, for instance, Sections~4.4.3 and~4.4.7 of~\cite{CarCon}). 

Recall Definition~\ref{BD2}.  Observe that $\{1, \bebe,0\}$ is a subalgebra of ${\bf A}_\bededos$ in which $\copi{x}=\circ x$, for every $x$.  As shown in~\cite[Section~4.4.7]{CarCon}, $\elefi$ can be defined over signature $\{\land, \vee, \to, \neg, \circ \}$ or, equivalently (by the last observation), over signature $\bb P_T$. The corresponding matrix logic with domain $\{1, \bebe,0\}$ and set of designated values $D_\bededos=\{1, \bebe\}$ is therefore a presentation of  $\elefi$ over  $\bb P_T$ as a sub-matrix of  $\bededos$.
The next proposition is a first step in order to prove that $\bededos$ is maximal with respect $\elefi$ by using Theorem~\ref{logica-maxi-caract}).

\begin{propo}\label{extensiones-bd2}
 $\cele$  and $\elefi $ are deductive extensions of $\bededos$. 
\end{propo}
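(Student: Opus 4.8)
The plan is to derive both assertions from Lemma~\ref{submat-sublogica} by exhibiting $\cele$ and $\elefi$ as the logics induced by suitable sub-matrices of $\mm M_\bededos$. Recall that, since $\cele$, $\elefi$ and $\bededos$ are all considered over the common signature $\bb P_T$, saying that a logic $\logi$ is a deductive extension of $\bededos$ means precisely that $\martillo{\bededos}\ \subseteq\ \martillo{\logi}$, and that Lemma~\ref{submat-sublogica} delivers exactly such an inclusion whenever $\logi$ is presented by a sub-matrix of $\mm M_\bededos$ whose designated set is $\mm D_\bededos$ intersected with the carrier of the corresponding subalgebra.

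For $\elefi$ most of the work is already in place in the discussion preceding the statement: $\{1,\bebe,0\}$ is a subalgebra of ${\bf A}_\bededos$ on which $\copi x$ coincides with the definable operator $\circ x$, and the sub-matrix carried by $\{1,\bebe,0\}$ with designated set $\mm D_\bededos\cap\{1,\bebe,0\}=\{1,\bebe\}$ is a presentation of $\elefi$ over $\bb P_T$. Applying Lemma~\ref{submat-sublogica} with ${\bf A}_1={\bf A}_\bededos$ and ${\bf A}_2$ this three-element subalgebra yields $\martillo{\bededos}\ \subseteq\ \martillo{\elefi}$.

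For $\cele$ I would first check that $\{0,1\}$ is closed under all the operations of ${\bf A}_\bededos$. Reading off Table~\ref{alg-bededos} together with the table of {\bf DM4}, the operations $\wedge,\vee,\neg$ and $\imp$ restrict to the classical truth functions on $\{0,1\}$, while $\copi 0=\copi 1=1$, so $\copi$ restricts to the verum constant. Hence $\{0,1\}$ is a subalgebra of ${\bf A}_\bededos$, and the sub-matrix it carries with designated set $\mm D_\bededos\cap\{0,1\}=\{1\}$ is the classical two-valued matrix over $\bb P_T$, i.e.\ a presentation of $\cele$ on this signature. A second application of Lemma~\ref{submat-sublogica} then gives $\martillo{\bededos}\ \subseteq\ \martillo{\cele}$, which finishes the argument.

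All the verifications involved are routine inspections of the operation tables; the only point requiring a modicum of care is the conceptual one of recognising the two sub-matrices as presentations of $\elefi$ and of $\cele$ over the extended signature $\bb P_T$ — that is, that adjoining $\copi$, which collapses to a classical operator on each subalgebra (the consistency operator $\circ$ on $\{1,\bebe,0\}$, the verum constant on $\{0,1\}$), does not change the underlying logic. Since this identification for $\elefi$ has just been set up in the preceding paragraph and for $\cele$ it is standard, I do not expect any genuine obstacle.
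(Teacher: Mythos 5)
Your proof is correct and follows essentially the same route as the paper: both arguments invoke Lemma~\ref{submat-sublogica} after identifying $\{0,1\}$ and $\{1,\bebe,0\}$ as subalgebras of ${\bf A}_\bededos$ whose induced sub-matrices (with designated sets $\mm D_\bededos$ intersected with the respective carriers) present $\cele$ and $\elefi$ over $\bb P_T$. The extra detail you supply on checking closure under the operations and on the identification of the sub-matrices is a welcome expansion of what the paper leaves implicit, but it is not a different argument.
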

\begin{proof}
Indeed, $\brac{\set{1,0},\bb P_T}$ is a sub-algebra of ${\bf A}_\bededos \ = \ \langle\cuatro,\bb P_T\rangle$. Besides,  $\mm D_\cele=\set 1=\mm D_\bededos \cap \{0,1\}$. By Lemma~\ref{submat-sublogica}, we have that $\cele$  is a deductive extension of $\bededos$. The proof that $\elefi$ is a deductive extension of $\bededos$ is analogous.
\end{proof}

\begin{teo}\label{maximajotatres}
	$\bededos$ is maximal w.r.t.  $\elefi$.
\end{teo}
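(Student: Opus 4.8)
The strategy is to apply the abstract maximality criterion of Theorem~\ref{logica-maxi-caract} with $\logi_1=\bededos$ and $\logi_2=\elefi$, where $\elefi$ is presented over the signature $\bb P_T$ as the sub-matrix of $\mm M_\bededos$ with domain $\{1,\bebe,0\}$ and designated set $\{1,\bebe\}$, as explained in the paragraph preceding Proposition~\ref{extensiones-bd2}. Thus $A_1=\cuatro=\{0,1,\bebe,\nene\}$ (so in the notation of the theorem $k=1$, $a_1=\bebe$, $a_2=\nene$, and $\nene$ is the single ``extra'' value), $A_2=\{0,1,\bebe\}$, and we already know from Proposition~\ref{extensiones-bd2} and Lemma~\ref{submat-sublogica} that $\martillo\bededos\ \subseteq\ \martillo\elefi$; the inclusion is proper because, e.g., $\estar p$ is not $\bededos$-valid (take $v(p)=\nene$) while $\bestar$ makes the relevant three-valued schema hold, or more simply because $\alpha\vee\no\alpha\vee\estar\alpha$ collapses appropriately over $\{1,\bebe,0\}$ — in any case the strictness is routine from the truth tables and was implicitly used above. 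It remains to verify the three numbered hypotheses of Theorem~\ref{logica-maxi-caract}.

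\textbf{Conditions (1) and (2).} Condition (1) is immediate: $A_1$ and $A_2$ are finite, $0\notin\mm D_\bededos=\{1,\bebe\}$, $1\in\mm D_{\elefi}$, and $\{0,1\}$ is a subalgebra of $A_2$ (it is closed under all operations of $\bb P_T$, as one reads off Tables~\ref{alg-demorgan} and~\ref{alg-bededos}, since on $\{0,1\}$ the operators $\wedge,\vee,\imp,\neg,\copi$ all behave classically — in particular $\copi 0=1$, $\copi 1=1$). For condition (2) I need formulas $\top(p),\bot(p)$ that are constantly $1$, resp. constantly $0$, under every $\bededos$-valuation. One may take $\top(p):=p\imp p$ (which evaluates to $1$ on every input, by inspection of the $\imp$ table, since $x\imp x\in\{1\}$ for all $x\in\cuatro$ — note $\bebe\imp\bebe=\bebe$, so one should instead use $\top(p):=\copi p\vee\copi\copi p$ or, safest, $\top(p):=\nof p\vee p$ composed with $\copi$; concretely $\top(p):=\copi(p\vee\nof p)$ works since $p\vee\nof p$ is always designated and lands in $\{1\}$? — one checks: $p\vee\nof p$ takes value $1$ when $p\in\{1,0\}$ and value $\bebe$ when $p=\bebe$ and value $1$ when $p=\nene$; then $\copi$ of that is $1$ unless the inner value is $\bebe$, giving $0$ at $p=\bebe$). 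Since pinning down a clean constant formula is a small puzzle, the cleanest choice is $\bot(p):=\neg\copi(p\vee\nof p)\wedge\copi(p\vee\nof p)$-style contradiction made from the classical negation: because $\nof$ is a classical negation (Proposition~\ref{negclasica}), $\bot(p):=\alpha\wedge\nof\alpha$ for any $\alpha$ has ${\rm mod}_\bededos(\bot(p))=\emptyset$, hence $v(\bot(p))\notin\mm D_\bededos$ for all $v$; but one needs value exactly $0$, so take $\bot(p):=\copi(p)\wedge\nof\copi(p)$ only if that is always $0$ — instead simply set $\bot(p):=\nof\top(p)$ once $\top$ is secured, and secure $\top$ via $\top(p):=\copi\copi\copi p$ after checking the orbit of $\copi$ (from the table $\copi 1=1,\ \copi\bebe=0,\ \copi\nene=\bebe,\ \copi 0=1$, so $\copi\copi x\in\{1\}$ for $x\in\{1,\bebe,0\}$ and $\copi\copi\nene=\copi\bebe=0$, hence $\copi\copi\copi x=1$ for all $x$). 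So $\top(p):=\copi\copi\copi p$ and $\bot(p):=\nof\top(p)$ discharge condition (2).

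\textbf{Condition (3): the separating formulas.} This is the heart of the argument and the step I expect to be the main obstacle. Here $n=2$, and the only index $i$ with $k+1\le i\le n$ is $i=2$, i.e.\ $a_2=\nene$; I must exhibit, for each $j\in\{1,2\}$ with $j\neq 2$ — that is, only $j=1$, i.e.\ $a_1=\bebe$ — a formula $\alpha^{2}_{1}(p)$ such that $e(p)=\nene$ implies $e(\alpha^{2}_{1}(p))=\bebe$. (The cases $j=0$ and $j=1_{\text{top}}$ are handled by $\bot(p),\top(p)$ from condition (2), which already send $\nene$ to $0$ and $1$ respectively; the theorem only asks for the ``extra'' targets, and indeed a careful reading of Theorem~\ref{logica-maxi-caract} shows one needs to hit every element of $A_2$ from $\nene$.) So concretely I want formulas that, evaluated at $\nene$, yield each of $0$, $\bebe$, $1$. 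From the truth tables: $\copi p$ sends $\nene\mapsto\bebe$, so $\alpha^{2}_{1}(p):=\copi p$ works for the target $\bebe$; $\estar p=\neg\copi\copi p$ sends $\nene\mapsto 1$ (since $\copi\copi\nene=0$ and $\neg 0=1$) — or reuse $\top$; and $\copi\copi p$ sends $\nene\mapsto 0$ — or reuse $\bot$. Thus the witnessing formulas are simply built from $\copi$ and $\neg$, and verifying condition (3) reduces to the finite check just sketched. Once all three conditions are in place, Theorem~\ref{logica-maxi-caract} yields that $\bededos$ is maximal with respect to $\elefi$, completing the proof. The only genuine care needed is bookkeeping: matching the abstract indices $a_i$ of the theorem to the concrete values $\bebe,\nene$ and making sure the constant formulas $\top,\bot$ and the separator $\copi p$ behave as claimed on all four values — a routine but not entirely mechanical table inspection.
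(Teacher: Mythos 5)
Your proposal is correct and takes essentially the same route as the paper: both apply Theorem~\ref{logica-maxi-caract}, with the identical separating formula $\copi p$ for condition (3) and only cosmetically different constant formulas for condition (2) (your $\copi{\copi{\copi{p}}}$ and $\nof{\copi{\copi{\copi{p}}}}$ versus the paper's $\estar p\vee\bestar p$ and $p\wedge\no p\wedge\copi p$; both choices check out against the tables). The only slip is in your parenthetical on strictness of the inclusion $\martillo{\bededos}\subsetneq\martillo{\elefi}$ --- at $v(p)=\nene$ the formula $\estar p$ takes the designated value $1$, so that valuation does not witness its non-validity --- but this aside is inessential to the argument (the paper omits it too), and a correct witness of a schema valid in $\elefi$ but not in $\bededos$ is, e.g., $(\copi p\wedge\no\copi p)\imp q$.
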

\begin{proof} Let us see that the conditions of Theorem \ref{logica-maxi-caract} are verified. In the proof of Proposition \ref{extensiones-bd2}, we checked that the basic conditions for each respective matrix hold. Besides, ``(1)'' ${\bf A}_\bededos=\set{1,0,\bebe,\nene}$ and ${\bf A}_\elefi=\set{1,0,\bebe}$ are finite, $0\not\in\mm D_{\bededos}$, $1\in\mm D_\elefi$ and $\set{0,1}$ is a subalgebra of ${\bf A}_\elefi$; \ \ ``(2)'' Let $\top(p):=\estar p\vee\bestar p$ and $\bot(p):=p\wedge\no p\wedge\copi p$, then $v(\top(p))=1$ and $v(\bot(p))=0$ for all $\bededos$-valuation $v$; \ \ ``(3)'' Let us consider the formula $\varphi_\bebe^\nene(p)=\copi p$. Then, for every $\bededos$-valuation $v$, $e(p)=\nene$ implies that $e\left(\varphi_\bebe^\nene(p)\right)=\bebe$.	 Therefore, $\bededos$ is maximal w.r.t. $\elefi$ by  Theorem \ref{logica-maxi-caract}.
\end{proof}

\

We end this section exhibiting some {\em DAT}'s.

\begin{teo}\label{dat1}	If $\Gamma$ is a finite set of formulas, then 
	\begin{center}
			$\Gamma\dismodels\elefi\psi$ \ iff \ $\Gamma,\bestar p_1,\dotsc,\bestar p_k\mobedos\psi$
	\end{center}
	where $p_1,\dotsc,p_k$ are the propositional variables occurring in $\Gamma\cup\set\psi$.
\end{teo}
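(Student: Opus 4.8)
The plan is to prove the Derivability Adjustment Theorem (Theorem~\ref{dat1}) semantically, working directly with the matrices $\mm M_\bededos$ (domain $\cuatro$) and $\mm M_\elefi$ (domain $\set{1,\bebe,0}$), which is a sub-matrix of $\mm M_\bededos$ by the discussion preceding Proposition~\ref{extensiones-bd2}. Note first that for any formula $\gamma$ and any $\bededos$-valuation $v$, we have $v(\bestar\gamma)\in\mm D_\bededos$ if and only if $v(\gamma)\in\set{1,\bebe,0}$, i.e.\ if and only if $v(\gamma)\neq\nene$; this is read off the truth table for $\bestar$ (it sends $1,\bebe,0$ to designated values and $\nene$ to $0$). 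Consequently, an arbitrary $\bededos$-valuation $v$ satisfies $\set{\bestar p_1,\dotsc,\bestar p_k}$ exactly when $v(p_i)\neq\nene$ for each propositional variable $p_i$ occurring in $\Gamma\cup\set\psi$.

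The key structural observation is the following closure property: if $v$ is a $\bededos$-valuation with $v(p_i)\in\set{1,\bebe,0}$ for all variables $p_i$ occurring in a set of formulas $S$, then $v(\sigma)\in\set{1,\bebe,0}$ for every $\sigma\in S$. This holds because $\set{1,\bebe,0}$ is a subalgebra of ${\bf A}_\bededos$ (it is closed under $\wedge,\vee,\neg,\imp,\copi$, as one checks from the tables), so the restriction of $v$ to variables ranging over $\set{1,\bebe,0}$ is really an $\elefi$-valuation, and $\elefi$-valuations take values in $\set{1,\bebe,0}$ by definition. Conversely, every $\elefi$-valuation is a $\bededos$-valuation that happens to avoid $\nene$ on all variables.

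With these two facts the argument runs as follows. For the right-to-left direction, suppose $\Gamma,\bestar p_1,\dotsc,\bestar p_k\mobedos\psi$ and let $w$ be an $\elefi$-valuation with $w\in{\rm mod}_\elefi(\Gamma)$. Viewing $w$ as a $\bededos$-valuation, it avoids $\nene$ on all variables, hence $w$ satisfies every $\bestar p_i$, and $w$ designates each $\gamma\in\Gamma$ (the designated sets agree: $\mm D_\elefi=\mm D_\bededos\cap\set{1,\bebe,0}$); by hypothesis $w(\psi)\in\mm D_\bededos$, and since $w(\psi)\in\set{1,\bebe,0}$ this gives $w(\psi)\in\mm D_\elefi$. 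Hence $\Gamma\dismodels\elefi\psi$. For the left-to-right direction, suppose $\Gamma\dismodels\elefi\psi$ and let $v$ be any $\bededos$-valuation satisfying $\Gamma\cup\set{\bestar p_1,\dotsc,\bestar p_k}$. Then $v(p_i)\neq\nene$ for every variable $p_i$ occurring in $\Gamma\cup\set\psi$, so by the closure property $v$ restricted to those variables behaves like an $\elefi$-valuation on all of $\Gamma\cup\set\psi$; since $v$ designates $\Gamma$, the hypothesis $\Gamma\dismodels\elefi\psi$ forces $v(\psi)\in\mm D_\elefi\subseteq\mm D_\bededos$. Thus $\Gamma,\bestar p_1,\dotsc,\bestar p_k\mobedos\psi$.

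The one point requiring care — and the main obstacle — is the bookkeeping about which variables matter: $\bestar$ must be asserted on exactly the variables occurring in $\Gamma\cup\set\psi$, and one must make sure that a $\bededos$-valuation satisfying those $\bestar p_i$ can be replaced (on the relevant variables) by an honest $\elefi$-valuation without changing the value of any formula in $\Gamma\cup\set\psi$. This is where the hypothesis that $\Gamma$ is \emph{finite} is used, guaranteeing finitely many such variables so the list $\bestar p_1,\dotsc,\bestar p_k$ is a genuine finite premise set; together with the subalgebra fact the rest is a routine verification against the truth tables.
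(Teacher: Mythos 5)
Your proof is correct and follows essentially the same route as the paper: both directions are handled semantically via the observation that $\bestar p$ is designated exactly when $v(p)\neq\nene$ and that $\set{1,\bebe,0}$ is a subalgebra of ${\bf A}_\bededos$ whose induced sub-matrix is $\mm M_\elefi$, replacing a $\bededos$-valuation by an $\elefi$-valuation agreeing on the variables of $\Gamma\cup\set{\psi}$ (the paper makes this replacement explicit by setting the remaining variables to $0$). No gaps.
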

\begin{proof}{ \em ``Only if''} \ part. Let  $v$ a $\mm M_\bededos$-valuation such that $v[\Gamma\cup\set{\bestar p_1,\dotsc,\bestar p_k}]\subseteq\set{1,\bebe}$. Then $v(p_i)\in\set{1,\bebe,0}$ for all  $1\leq i\leq n$. Let $\bar v$ be the  $\mm M_\elefi$-valuation defined  by
	$$\bar v(p) \ := \ 
	\begin{cases}
		v(p)& \mbox{\ if \ } p\in\set{p_1,\dotsc,p_k}\\
		0& \mbox{\ if \ } p\not\in\set{p_1,\dotsc,p_k}\\
	\end{cases}
	$$
	Is clear that $\bar v(\alpha)=v(\alpha)$ for every formula $\alpha$ such that ${\sf var}(\alpha)\in\set{p_1,\dotsc,p_k}$. Then, 
	$\bar v[\Gamma]\subseteq\set{1,\bebe}$ and by hypothesis we have $\bar v(\psi)\in\set{1,\bebe}$. Therefore, $v(\psi)\in\set{1,\bebe}$.
	\\[2mm]
	{\em ``If''} \ part. Let $v$ be a $\mm M_\elefi$-valuation  such that  $v[\Gamma]\subseteq\set{1,\bebe}$. Let us consider the $\mm M_\bededos$-valuation $\bar v$ where $\bar v=v$. Then, $\bar v[\Gamma]\subseteq\set{1,\bebe}$ and $\bar v(\bestar p_i)=1$ for each $1\leq i\leq k$. By hypothesis, it follows that $v(\psi)=\bar v(\psi)\in\set{1,\bebe}$.
\end{proof}

\

\begin{teo} \label{dat2} Consider the logic $\cele$ presented over the signature $\bb P_T$ with matrix $\mm M_\cele=\brac{{\bf B},\mm D_\cele}$, where ${\bf B}=\brac{\set{0,1},\bb P_T}$ is the subalgebra of ${\bf A_\bededos}$. 

		If $\Gamma$ is a finite set of formulas, then
		\begin{center}
			$\Gamma\dismodels\cele\psi$ \ iff \ $\Gamma,\circ p_1,\dotsc,\circ p_k\mobedos\psi$
		\end{center}
		where $p_1,\dotsc,p_k$ are the propositional variables occurring in $\Gamma\cup\set\psi$.
\end{teo}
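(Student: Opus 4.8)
The plan is to mimic exactly the structure of the proof of Theorem~\ref{dat1}, which is the analogous DAT relating $\elefi$ to $\bededos$. The key observation driving everything is that $\brac{\set{0,1},\bb P_T}$ is the subalgebra $\bf B$ of ${\bf A}_\bededos$, and on $\set{0,1}$ the consistency operator $\copi$ returns $1$, i.e. the formula $\copi p$ evaluates to a designated value precisely when $p$ receives a value in $\set{0,1}$. (Equivalently, on $\bf B$ we have $\circ x = \copi x = 1$, so $\circ p_i$ and $\copi p_i$ are interchangeable as side hypotheses; I would state it with $\circ$ to match the statement, but could just as well use $\copi$.) So the side formulas $\circ p_1,\dots,\circ p_k$ act as ``classicality constraints'' forcing the relevant variables to behave two-valuedly.

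For the \emph{``only if''} direction, I would start from a $\mm M_\bededos$-valuation $v$ with $v[\Gamma\cup\set{\circ p_1,\dots,\circ p_k}]\subseteq\mm D_\bededos$. From $v(\circ p_i)\in\set{1,\bebe}$ and the truth table of $\circ$ (which outputs a designated value only at $1$ and $0$) I get $v(p_i)\in\set{0,1}$ for all $i$. Now define an $\mm M_\cele$-valuation $\bar v$ by setting $\bar v(p) = v(p)$ if $p\in\set{p_1,\dots,p_k}$ and $\bar v(p)=0$ (or $1$, either works) otherwise; this is well-defined as a $\bf B$-valuation because all the $v(p_i)$ already lie in $\set{0,1}$, and since $\bf B$ is a subalgebra, $\bar v$ extends homomorphically. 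By a routine induction on formula complexity, $\bar v(\alpha)=v(\alpha)$ for every $\alpha$ whose variables all lie among $p_1,\dots,p_k$; in particular this applies to every formula in $\Gamma\cup\set\psi$. Hence $\bar v[\Gamma]\subseteq\mm D_\cele$, so by hypothesis $\bar v(\psi)\in\mm D_\cele$, and therefore $v(\psi)=\bar v(\psi)\in\mm D_\bededos$. Since $v$ was arbitrary, $\Gamma,\circ p_1,\dots,\circ p_k\mobedos\psi$.

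For the \emph{``if''} direction, take an $\mm M_\cele$-valuation $v$ with $v[\Gamma]\subseteq\mm D_\cele$. Viewing $v$ as a $\mm M_\bededos$-valuation $\bar v$ (legitimate since $\bf B$ is a subalgebra of ${\bf A}_\bededos$), we have $\bar v[\Gamma]\subseteq\set{1,\bebe}$, and since $\bar v(p_i)=v(p_i)\in\set{0,1}$ we get $\bar v(\circ p_i)=1\in\mm D_\bededos$ for each $i$. By the assumed entailment $\Gamma,\circ p_1,\dots,\circ p_k\mobedos\psi$, it follows that $\bar v(\psi)\in\mm D_\bededos$; but $\bar v(\psi)=v(\psi)$ and $v$ only takes values in $\set{0,1}$, so $v(\psi)=1\in\mm D_\cele$. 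This shows $\Gamma\dismodels\cele\psi$.

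I do not anticipate any real obstacle here: the argument is essentially identical to that of Theorem~\ref{dat1}, with $\bestar$ replaced by $\circ$ and the target domain $\set{0,1}$ replacing $\set{1,\bebe,0}$. The only point requiring a moment's care is the verification that $\circ p$ is designated in ${\bf A}_\bededos$ \emph{exactly} on the two classical values — which is immediate from the $\circ$ column of the table in Section~\ref{sectBB2} — and the standard induction that a valuation on a subalgebra agrees with any extension on formulas built only from the ``pinned down'' variables. Finiteness of $\Gamma$ is used only to ensure that $p_1,\dots,p_k$ is a finite list, exactly as in Theorem~\ref{dat1}.
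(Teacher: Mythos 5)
Your proof is correct and is essentially the argument the paper intends: the paper's own proof of Theorem~\ref{dat2} simply points to Theorem~\ref{dat1}, and your two directions reproduce that proof verbatim with $\bestar$ replaced by $\circ$ and the target domain $\set{1,\bebe,0}$ replaced by $\set{0,1}$. One caveat on your parenthetical remark: the claim that you ``could just as well use $\copi{}$'' for the side hypotheses is false. Although $\copi{x}=\circ x=1$ on the subalgebra $\set{0,1}$, the side formulas are evaluated in the four-valued matrix, where $\copi{\nene}=\bebe$ is designated; hence $\copi{p}$ only pins $v(p)$ down to $\set{0,1,\nene}$ and the ``only if'' direction breaks --- for instance $\dismodels\cele p\vee\neg p$, yet $\copi{p}\not\mobedos p\vee\neg p$, as witnessed by $v(p)=\nene$. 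The proof you actually wrote, using $\circ$ (which is designated exactly on $\set{0,1}$), is fine.
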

\begin{proof} It is similar to the proof of Theorem~\ref{dat1}.
\end{proof}

Obviously Theorem \ref{dat1} can be generalized to any (finite or non-finite) set of formulas:

\begin{teo} Let $\Gamma \cup\{\psi\}$ be a set of formulas. Then, 
		\begin{center}
			$\Gamma\dismodels\elefi\psi$ \ iff \ $\Gamma,\left({\sf var}(\Gamma)\right)^\bestar,\left({\sf var}(\psi)\right)^\bestar\mobedos\psi$
		\end{center}
		where $\left({\sf var}(\Gamma)\right)^\bestar=\set{\bestar p\mid p\in{\sf var}(\Gamma)}$ \ and \
		$\left({\sf var}(\psi)\right)^\bestar=\set{\bestar p\mid p\in{\sf var}(\psi)}$.
\end{teo}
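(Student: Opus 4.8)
The plan is to reduce the general (possibly infinite) case to the finitary case already established in Theorem~\ref{dat1}, by exploiting the finitarity of both $\elefi$ and $\bededos$ (the latter being finitary by Theorem~\ref{carac-mat-fin}, since $\mm M_\bededos$ is finite; the former likewise). First I would unpack notation: write $P_\Gamma := {\sf var}(\Gamma)$, $P_\psi := {\sf var}(\psi)$, and $\Sigma := \Gamma \cup \left(P_\Gamma\right)^\bestar \cup \left(P_\psi\right)^\bestar$, so the claim is $\Gamma\dismodels\elefi\psi$ iff $\Sigma\mobedos\psi$.

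For the ``only if'' direction, assume $\Gamma\dismodels\elefi\psi$. By finitarity of $\elefi$ there is a finite $\Gamma_0\subseteq\Gamma$ with $\Gamma_0\dismodels\elefi\psi$. Let $q_1,\dots,q_m$ be the propositional variables occurring in $\Gamma_0\cup\set\psi$; these all lie in $P_\Gamma\cup P_\psi$, hence $\set{\bestar q_1,\dots,\bestar q_m}\subseteq\Sigma$. Theorem~\ref{dat1} applied to the finite set $\Gamma_0$ gives $\Gamma_0,\bestar q_1,\dots,\bestar q_m\mobedos\psi$, and monotonicity of $\mobedos$ (it is Tarskian by Theorem~\ref{carac-mat-fin}) yields $\Sigma\mobedos\psi$. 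For the ``if'' direction, assume $\Sigma\mobedos\psi$. By finitarity of $\bededos$ there is a finite $\Sigma_0\subseteq\Sigma$ with $\Sigma_0\mobedos\psi$; write $\Sigma_0 = \Gamma_0\cup\Lambda$ where $\Gamma_0\subseteq\Gamma$ is finite and $\Lambda\subseteq\left(P_\Gamma\right)^\bestar\cup\left(P_\psi\right)^\bestar$ is a finite set of $\bestar$-starred variables. Enlarging $\Gamma_0$ to a finite $\Gamma_1\subseteq\Gamma$ if necessary, we may assume every variable appearing in some $\bestar p\in\Lambda$ actually occurs in $\Gamma_1\cup\set\psi$ (replace it by $\emptyset$-valid data, or simply drop redundant starred variables — see below). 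Let $p_1,\dots,p_k$ enumerate ${\sf var}(\Gamma_1\cup\set\psi)$; then $\Gamma_1,\bestar p_1,\dots,\bestar p_k\mobedos\psi$ by monotonicity (since $\Sigma_0\subseteq\Gamma_1\cup\set{\bestar p_1,\dots,\bestar p_k}$), and Theorem~\ref{dat1} applied to the finite set $\Gamma_1$ gives $\Gamma_1\dismodels\elefi\psi$, whence $\Gamma\dismodels\elefi\psi$ by monotonicity of $\dismodels\elefi$.

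The only subtle point — the one I expect to need a line of care — is handling a starred variable $\bestar p$ in $\Sigma_0$ whose underlying variable $p$ does not occur in $\Gamma_0\cup\set\psi$. Since $p\in P_\Gamma = {\sf var}(\Gamma)$ (or $p\in P_\psi$), there is \emph{some} formula in $\Gamma$ containing $p$; adjoining any one such formula to $\Gamma_0$ keeps the set finite and brings $p$ into scope, so after finitely many such adjunctions we obtain the finite $\Gamma_1\subseteq\Gamma$ used above with ${\sf var}(\Lambda\text{'s variables})\subseteq{\sf var}(\Gamma_1\cup\set\psi)$. (If $p\in P_\psi$ it is already in scope and nothing need be added.) Everything else is a routine combination of Theorem~\ref{dat1} with the Tarskian and finitary properties of the two consequence relations, so no computation with the matrices is required beyond what Theorem~\ref{dat1} already supplied.
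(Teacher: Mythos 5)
Your proof is correct. Note, though, that the paper itself offers no proof of this statement---it is introduced with ``Obviously Theorem~\ref{dat1} can be generalized\dots''---and the generalization the authors have in mind is almost certainly the \emph{direct} one: the valuation-transfer argument in the proof of Theorem~\ref{dat1} nowhere uses finiteness of $\Gamma$ in an essential way (in the ``only if'' direction one simply sets $\bar v(p):=v(p)$ for $p\in{\sf var}(\Gamma)\cup{\sf var}(\psi)$ and $\bar v(p):=0$ otherwise, which is a legitimate $\mm M_\elefi$-valuation because $\bestar p$ being designated forces $v(p)\neq\nene$; the ``if'' direction transfers any $\elefi$-valuation unchanged and all $\bestar p$ automatically receive value $1$), so the same proof goes through verbatim for infinite $\Gamma$. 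You instead treat Theorem~\ref{dat1} as a black box and reduce to it via the finitarity of $\elefi$ and $\bededos$ (both guaranteed by Theorem~\ref{carac-mat-fin}). That route is sound, and your handling of the one genuine subtlety---a starred variable $\bestar p\in\Sigma_0$ whose underlying $p$ does not occur in $\Gamma_0\cup\set\psi$, resolved by adjoining to $\Gamma_0$ a formula of $\Gamma$ containing $p$ so that Theorem~\ref{dat1} applies with exactly the variables of $\Gamma_1\cup\set\psi$---is correct (one could alternatively just discard such $\bestar p$ by re-evaluating $p$ at $1$, but your adjunction avoids any new matrix computation). The trade-off: the direct argument is shorter and explains why the finiteness hypothesis in Theorem~\ref{dat1} was never really needed, while your compactness argument shows the general statement is a formal consequence of the finite one, independent of how Theorem~\ref{dat1} was proved.
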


\

The same can be done with Theorem \ref{dat2}.

\section{Twist structures and a Hilbert calculus for $\bededos$} \label{twist}

In this section a Hilbert style presentation for $\bededos$ will be given, obtained from a semantical characterization by means of twist structures.

Twist structures were introduced independently by M. Fidel (\cite{fid:78}) and D. Vakarelov (\cite{vaka:77}) in 1977-78 with the aim of characterizing Nelson's logic {\bf N4}. However, the main algebraic ideas underlying twist structures were already introduced in 1958 by J. Kalman (\cite{kal:58}). Indeed, in such a work Kalman introduced the now called De Morgan lattices starting from a distributive lattice $L$ in which he defined the following operations over $L \times L$:

\begin{itemize}
\item[(1)]   $\tilde{\neg}\,(a,b) = (b, a)$;
\item[(2)]  $(a,b) \,\tilde{\wedge}\, (c,d) = (a \sqcap c, b \sqcup d)$; 
\item[(3)] $(a,b) \,\tilde{\vee}\, (c,d) = (a \sqcup c, b \sqcap d)$ 
\end{itemize}

\noindent  where $\sqcap$ and $\sqcup$ denote infima and suprema in $L$.
This produces a De Morgan lattice. The same idea was proposed by Fidel and Vakarelov for {\bf N4}, and also by J.~M. Dunn in~\cite{dunn}, where he obtained a representation of De Morgan lattices by means of pairs of sets with the same kind of operations. Twist structures semantics were afterwards proposed for several logics. In the sequel,  a twist structures semantics for    $\bededos$ will  be introduced.

First, consider for convenience the  signature $\mathbb{P}_T=\{\land, \vee, \to, \neg,\copi\}$ for   $\bededos$. The presentation of $\bededos$ as a matrix logic over signature $\mathbb{P}_T$ will be denoted by $\bededos_T$. The idea is, as in the case of $\bedun$, identifying the truth values of {\bf 4} with pairs $(a,b)$ in ${\bf 2}=\{0,1\}$ such that $a$ represents (in informal terms) information for a given sentence $\alpha$ while $b$ represents information about the negation of $\alpha$ (or against $\alpha$). Hence, $1$, $\bebe$, $\nene$ and $0$ are identified, respectively with  $(1,0)$, $(1,1)$, $(0,0)$ and $(0,1)$.\footnote{This is analogous to the interpretation of Belnap and Dunn's logic $FDE$.} Consider the following twist operators over  ${\bf 2} \times {\bf2}$: $\tilde{\neg}$, $\tilde{\wedge}$, $\tilde{\vee}$ are defined by means of (1)-(3) above, and

\begin{itemize}
\item[(4)] $(a,b) \,\tilde{\to}\, (c,d)=(a \Rightarrow c, (b \Rightarrow a) \sqcap d)$;
\item[(5)] $\tilde{\copi{}} (a,b)= ({\sim}(a \sqcap b), a \Leftrightarrow b)$
\end{itemize}

\noindent where $a \Rightarrow b = {\sim} a \sqcup b$ and ${\sim} a$ denote respectively the implication and the Boolean complement in {\bf 2} (seen as a Boolean algebra), and $a \Leftrightarrow b=(a \Rightarrow b) \sqcap (b \Rightarrow a)$. Let $\mathcal{T}_{\bf 2}$ be the (twist) algebra obtained in this way, and let $\mathcal{M}_{\bf 2}= \langle \mathcal{T}_{\bf 2},D_{\bf 2}\rangle$ be the logical matrix associated to $\mathcal{T}_{\bf 2}$ where $D_{\bf 2}=\{(1,0), (1,1)\}$. Observe that, with the identifications above, these operations correspond to the truth-tables of the operators in the signature $\mathbb{P}_T$, hence $\mathcal{M}_{\bf 2}$ is (up to names) the logical matrix for $\bededos_T$. 

This construction can be easily defined over any Boolean algebra {\bf A}: indeed, let  $\mathcal{T}_{\bf A} = \langle A \times A, \tilde{\wedge}, \tilde{\vee}, \tilde{\to}, \tilde{\neg}, \tilde{\copi{}} \rangle$ such that the operations are defined by means of clauses (1)-(5) above (where now $a,b,c,d \in A$, the domain of {\bf A}). The logical matrix associated to $\mathcal{T}_{\bf A}$ is $\mathcal{M}_{\bf A}= \langle \mathcal{T}_{\bf A},D_{\bf A}\rangle$  such that $D_{\bf A}=\{(1,a) \ : \ a \in A\}$ and $1$ is the top element of {\bf A}. Let $\sdtstile{\bf A}{T}$ be the consequence relation associated to $\mathcal{M}_{\bf A}$, namely: $\Gamma \sdtstile{\bf A}{T} \varphi$ iff, for every valuation $v$ over $\mathcal{M}_{\bf A}$, if $v(\gamma) \in D_{\bf A}$ for every $\gamma \in \Gamma$ then $v(\varphi) \in D_{\bf A}$. Let  $\sdtstile{\bededos}{T}$ be the consequence relation associated to the class of twist structures for $\bededos_T$, namely: $\Gamma \sdtstile{\bededos}{T} \varphi$ iff $\Gamma \sdtstile{\bf A}{T} \varphi$ for every {\bf A}.

From this semantics, a Hilbert calculus for $\bededos$ (to be precise, for $\bededos_T$) will be presented. It is easy to see  that, over the signature $\{\land,\vee, \to\}$, the logic   $\bededos_T$ coincides with positive  classical logic. Taking this into account, and the definition of the twist operator  $\tilde{\copi{}}$, we obtain the following Hilbert calculus $\hachedost$ for $\bededos_T$ (where $\alpha \leftrightarrow \beta$ is an abbreviation for $(\alpha \to \beta) \land (\beta \to \alpha)$):\\[2mm]

\noindent {\bf Axioms} \\

Axiom schemata \pos 1-\pos 9 from positive classical logic (recall Proposition~\ref{bededos-stand}), plus the following ones:

\

	\begin{tabular}{llr}
		{\bf (DNeg)}
		&$\neg\neg\alpha \leftrightarrow \alpha$&\\[2mm]
		{\bf (DM1)}
		&$\neg(\alpha \vee \beta) \leftrightarrow (\neg \alpha\wedge \neg\beta)$&\\[2mm]
		{\bf (DM2)}
		&$\neg(\alpha \wedge \beta) \leftrightarrow (\neg \alpha\vee \neg\beta)$&\\[2mm]
		{\bf (DM3)}
		&$\neg(\alpha \to \beta) \leftrightarrow ((\neg \alpha \to \alpha)\wedge \neg\beta)$&\\[2mm]
		{\bf ($\copi$-1)}
		&$(\copi{\alpha} \land (\alpha \land \neg \alpha)) \to \beta$&\\[2mm]
		{\bf ($\copi$-2)}
		&$\copi{\alpha} \vee (\alpha \land \neg \alpha)$&\\[2mm]
		{\bf ($\copi$-3)}
		&$\neg \copi{\alpha}  \leftrightarrow (\alpha  \leftrightarrow \neg \alpha)$&\\
	\end{tabular}

\

\

\noindent{\bf Rule of inference} \, (MP) \ modus ponens.

\

In order to prove soundness and completeness of $\hachedost$
w.r.t.  the four-valued matrix logic $\bededos_T$ it will be first proven soundness and completeness of   $\hachedost$  w.r.t. the twist structures semantics defined above. Hence, the former result will be obtained from the latter by using standard arguments from Boolean algebras.\footnote{A similar technique was used in~\cite[Section~5]{bor:con:her:22}.} Before doing this, a useful notation will be introduced. Let $v:For(\mathbb{P}_T) \to\mathcal{T}_{\bf A}$ be a valuation over $\mathcal{M}_{\bf A}$ (that is, a homomorphism of $\mathbb{P}_T$-algebras). From now on, $v$ can be denoted by $v=(v_1,v_2)$ where $v_1,v_2:For(\mathbb{P}_T) \to A$. Hence, $v(\alpha)=(v_1(\alpha),v_2(\alpha))$ for every formula $\alpha$.

\begin{teo} [Soundness and completeness of $\hachedost$ w.r.t. twist structures semantics] \label{sound-compl-twist}
Let $\Gamma \cup \{\alpha\}$ be  a set of formulas over $\mathbb{P}_T$. Then: $\Gamma \mabedost \alpha$ \ iff \ $\Gamma \sdtstile{\bededos}{T} \alpha$.
\end{teo}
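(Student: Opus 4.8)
The plan is to prove soundness and completeness of $\hachedost$ with respect to the twist-structures semantics $\sdtstile{\bededos}{T}$ by the standard Lindenbaum--Henkin strategy, adapted to the twist-algebra setting.

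\textbf{Soundness.} First I would check that every axiom schema of $\hachedost$ is valid in $\mathcal{M}_{\bf A}$ for an arbitrary Boolean algebra ${\bf A}$, i.e.\ that under any valuation $v=(v_1,v_2)$ the first coordinate of the axiom's value equals the top element $1$ of ${\bf A}$. The positive-logic axioms \pos1--\pos9 are dealt with by the observation already made in the excerpt that the $\{\land,\vee,\to\}$-reduct of each $\mathcal{T}_{\bf A}$ behaves, in its first coordinate, like (the Lindenbaum algebra of) positive classical logic. The negation axioms \textbf{(DNeg)}, \textbf{(DM1)}--\textbf{(DM3)} and the $\copi$-axioms $\copi$-1, $\copi$-2, $\copi$-3 are verified by unwinding the twist clauses (1)--(5); this is just Boolean-algebra bookkeeping (e.g.\ for \textbf{(DM3)} one computes $\tilde\neg(v(\alpha)\,\tilde\to\,v(\beta))=(\,(v_2(\beta)\Rightarrow v_1(\beta))\sqcap v_2(\alpha)\,,\,v_1(\alpha)\Rightarrow v_1(\beta)\,)$ and compares with the right-hand side). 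Finally one checks that (MP) preserves the property of having $1$ in the first coordinate, which follows from the definition of $\tilde\to$ and $D_{\bf A}$. Hence $\Gamma\mabedost\alpha$ implies $\Gamma\sdtstile{\bededos}{T}\alpha$.

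\textbf{Completeness.} For the converse I would argue contrapositively: assume $\Gamma\nvdash_{\hachedost}\alpha$. Since $\hachedost$ contains positive classical logic it is finitary, so by the Lindenbaum--\L os Theorem~\ref{L-A-lemma} there is an $\alpha$-saturated (hence closed, hence deductively closed and prime with respect to $\vee$, using \pos7--\pos9 and (MP)) theory $\Delta\supseteq\Gamma$ with $\alpha\notin\Delta$. The key construction is the canonical twist algebra. Let ${\bf A}$ be the Lindenbaum--Tarski algebra of the $\{\land,\vee,\to\}$-fragment of $\hachedost$ (which, by the positive-logic axioms plus \pos9, is a Boolean algebra; note that $\neg$ is not used to build ${\bf A}$). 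Define a valuation $v=(v_1,v_2)$ into $\mathcal{T}_{\bf A}$ on propositional variables by setting $v_1(p)=[p]$ and $v_2(p)=[\neg p]$ (equivalence classes in ${\bf A}$), and extend to all formulas as the unique $\mathbb{P}_T$-homomorphism. The central lemma to prove is that for every formula $\beta$,
\[
v(\beta)=\big([\beta],[\neg\beta]\big),
\]
i.e.\ the homomorphic extension agrees coordinatewise with ``take the class of $\beta$'' and ``take the class of $\neg\beta$''. This is an induction on $\beta$: the atomic case is by definition; the $\land,\vee,\to$ cases use that $[\cdot]$ is a homomorphism of the positive fragment together with \textbf{(DM1)},\textbf{(DM2)},\textbf{(DM3)} to rewrite $[\neg(\beta_1\land\beta_2)]$ etc.; the $\neg$ case uses \textbf{(DNeg)}; and the $\copi$ case uses $\copi$-1, $\copi$-2 (which together force $[\copi\beta]=[{\sim}(\beta\land\neg\beta)]$ in the Boolean algebra ${\bf A}$, i.e.\ the first coordinate of clause (5)) and $\copi$-3 (which gives $[\neg\copi\beta]=[\beta\leftrightarrow\neg\beta]$, the second coordinate). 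Once this lemma holds, $v(\gamma)\in D_{\bf A}$ iff $[\gamma]=1$ iff $\gamma\in\Delta$ (the last equivalence because $\Delta$ is deductively closed and $1$ is the class of theorems relative to $\Delta$), so $v$ sends all of $\Gamma$ into $D_{\bf A}$ but $v(\alpha)\notin D_{\bf A}$ since $\alpha\notin\Delta$. This witnesses $\Gamma\not\sdtstile{\bf A}{T}\alpha$, hence $\Gamma\not\sdtstile{\bededos}{T}\alpha$, completing the proof.

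\textbf{Main obstacle.} The routine parts are the axiom verifications and the closure/primeness properties of the saturated theory. The delicate step is the representation lemma $v(\beta)=([\beta],[\neg\beta])$, and within it the $\copi$-clause: one must check that the axioms $\copi$-1, $\copi$-2, $\copi$-3 are exactly strong enough to pin down both coordinates of $\tilde{\copi{}}$ in terms of the Boolean operations of ${\bf A}$ — that $[\copi\beta]$ equals the Boolean complement of $[\beta\land\neg\beta]$ (first coordinate of clause (5)) and that $[\neg\copi\beta]$ equals $[\beta\leftrightarrow\neg\beta]$ (second coordinate). Verifying that these three axioms jointly determine clause (5), and that no further axiom about $\copi$ is needed, is the crux; everything else is standard Lindenbaum--Henkin machinery, and the passage from the twist semantics to the four-valued matrix $\bededos_T$ is then immediate via the two-element Boolean algebra, as the excerpt indicates.
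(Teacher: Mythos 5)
Your proposal is correct and follows essentially the same route as the paper: soundness by verifying the axioms coordinatewise over $\mathcal{M}_{\bf A}$ for an arbitrary Boolean algebra ${\bf A}$, and completeness via a $\varphi$-saturated $\Delta$, the Lindenbaum--Tarski Boolean algebra ${\bf A}_\Delta$ (with $1=[\alpha\to\alpha]$ and $0=[\copi\alpha\land\alpha\land\neg\alpha]$), and the canonical valuation $v(\beta)=([\beta],[\neg\beta])$, whose well-definedness is exactly the representation lemma you isolate and which you correctly trace to \textbf{(DNeg)}, \textbf{(DM1)}--\textbf{(DM3)} and \textbf{($\copi$-1)}--\textbf{($\copi$-3)}. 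The only slip is a typo in your illustrative \textbf{(DM3)} computation, where the first coordinate should read $\bigl((v_2(\alpha)\Rightarrow v_1(\alpha))\sqcap v_2(\beta)\bigr)$ rather than having the roles of $\alpha$ and $\beta$ interchanged.
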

\begin{proof} \ \\
{\em (Soundness)}:  Let  {\bf A} be a Boolean algebra. It is immediate to see that, for any instance $\alpha$ of an axiom of $\hachedost$, it is the case that $v_1(\alpha)=1$ for every valuation $v$ over  $\mathcal{M}_{\bf A}$. That is, $v(\alpha) \in D_{\bf A}$ for every axiom $\alpha$. For instance, since $v(\copi{\alpha})=\tilde{\copi{}}(v(\alpha))$ and  $v(\neg \alpha)=\tilde{\neg}(v(\alpha))$ then $v_1(\copi{\alpha})={\sim}(v_1(\alpha) \sqcap v_2(\alpha)) = {\sim}(v_1(\alpha) \sqcap v_1(\neg\alpha))$ (by definition of $\tilde{\copi{}}$ and $\tilde{\neg}$). Hence, $v_1(\alpha)=1$ for every instance $\alpha$ of axioms {\bf ($\copi$-1)} and {\bf ($\copi$-2)}.
In addition, it is clear that $v_1(\alpha \to \beta)=v_1(\alpha)=1$ implies that $v_1(\beta)=1$, given that $v_1(\alpha \to \beta)=v_1(\alpha) \Rightarrow v_1(\beta)$. From this, by induction on the length of a derivation in  $\hachedost$ of $\varphi$ from $\Gamma$ it can be proven the following:  $\Gamma \mabedost \varphi$ implies that $\Gamma \sdtstile{\bf A}{T} \varphi$ for every {\bf A}.\\[1mm]
{\em (Completeness)}: Assume that $\Gamma \not{\mabedost} \varphi$.  By Theorem~\ref{L-A-lemma} there exists a $\varphi$-saturated set $\Delta$ in $\hachedost$ containing $\Gamma$. Consider now the following relation in $For(\mathbb{P}_T)$: $\alpha \equiv_\Delta \beta$ iff $\Delta \mabedost \alpha \leftrightarrow \beta$.  By using the axioms of positive classical logic  it is immediate to see that $\equiv_\Delta$ is a congruence over $For(\mathbb{P}_T)$ w.r.t. the connectives $\land$, $\lor$ and $\to$. That is, $[\alpha] \sqcap [\beta]=[\alpha \land \beta]$,   $[\alpha] \sqcup [\beta]=[\alpha \vee \beta]$ and  $[\alpha] \Rightarrow [\beta]=[\alpha \to \beta]$ are well-defined operations, where $[\gamma]$ denotes the equivalence class of the formula $\gamma$ w.r.t.  $\equiv_\Delta$. Moreover, $A_\Delta := For(\mathbb{P}_T)/_{\equiv_\Delta}$ is the domain of a Boolean algebra ${\bf A}_\Delta$ in which $1=[\alpha \to\alpha]$ and $0=[\copi{\alpha} \land \alpha \land \neg\alpha]$ for any formula $\alpha$. Hence $[\sneg\beta] = \sneg[\beta]$ in the Boolean algebra  ${\bf A}_\Delta$. Define now the twist structure  $\mathcal{T}_{{\bf A}_\Delta}$ over ${\bf A}_\Delta$, as well as its associated logical matrix  $\mathcal{M}_{{\bf A}_\Delta}$. 

By the very definitions, $[\alpha]=1$ iff $\Delta \mabedost \alpha$, iff $\alpha \in \Delta$. Now, it is easy to prove that the function $v_\Delta:For(\mathbb{P}_T) \to  A_\Delta \times A_\Delta$ given by $v_\Delta(\alpha)=([\alpha], [\neg\alpha])$ is  a valuation over $\mathcal{M}_{{\bf A}_\Delta}$. This is a consequence of the axioms of $\hachedost$ and  the definition of the operations in the Boolean algebra ${\bf A}_\Delta$. From this,  $v_\Delta$ is a valuation over  $\mathcal{M}_{{\bf A}_\Delta}$ such that $v_\Delta(\gamma)  \in D_{{\bf A}_\Delta}$ for every $\gamma \in \Gamma$, but $v_\Delta(\varphi)  \notin D_{{\bf A}_\Delta}$, since $\varphi \notin \Delta$. This proves that  $\Gamma \not\sdtstile{\bededos}{T} \varphi$.
\end{proof}

Now, it will be convenient to recall some classical results concerning Boolean algebras. The interested reader can consult, for instance,~\cite{giv:hal:2009}:

\begin{propo} \label{BAP}
Let {\bf A} be a Boolean algebra with domain $A$. Then:\\[1mm]
(1) If $a$ is an element of $A$ different from 1,  there exists an ultrafilter $F$ over {\bf A} such that $a \notin F$.\\[1mm]
(2) If $F$ is an ultrafilter over {\bf A}, the characteristic map $h_F:A \to \{0,1\}$ of $F$, given by $h_F(x)=1$ iff $x \in F$, is a homomorphism of Boolean algebras  between {\bf A} and the two-element Boolean algebra ${\bf 2}$.
\end{propo}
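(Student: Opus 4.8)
The plan is to derive both clauses from the elementary theory of filters on Boolean algebras together with the ultrafilter lemma. For clause~(1), I would first note that $a \neq 1$ is equivalent, in the Boolean algebra ${\bf A}$, to $\sneg a \neq 0$ (since $\sneg$ is an order-reversing involution sending $1$ to $0$); hence the principal filter $F_0 = \{x \in A : \sneg a \leq x\}$ generated by $\sneg a$ is a \emph{proper} filter, because $\sneg a \leq 0$ fails and so $0 \notin F_0$. Then I would invoke the ultrafilter lemma — every proper filter on a Boolean algebra is contained in an ultrafilter — to obtain an ultrafilter $F \supseteq F_0$. Since $\sneg a \in F_0 \subseteq F$ and $F$ is proper, $a$ cannot also belong to $F$: otherwise $a \wedge \sneg a = 0 \in F$, contradicting properness. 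Thus $a \notin F$, which is what clause~(1) asserts.

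For clause~(2), I would verify that $h_F$ preserves each of the Boolean operations, using only that $F$ is a proper filter (nonempty, upward closed, closed under binary meets) with the ultrafilter property that for every $x \in A$ exactly one of $x$ and $\sneg x$ lies in $F$. We have $h_F(1) = 1$ because $1 \in F$, and $h_F(0) = 0$ because $F$ is proper. For meets, $h_F(x \wedge y) = h_F(x) \wedge h_F(y)$ because $x \wedge y \in F$ iff $x \in F$ and $y \in F$: upward closure gives the left-to-right implication (as $x \wedge y \leq x$ and $x \wedge y \leq y$), and closure under meets gives the converse. For complements, $h_F(\sneg x) = \sneg h_F(x)$ is immediate from the ultrafilter property. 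Preservation of joins then follows formally by De Morgan: since $x \vee y = \sneg(\sneg x \wedge \sneg y)$, applying the cases already proved yields $h_F(x \vee y) = \sneg(\sneg h_F(x) \wedge \sneg h_F(y)) = h_F(x) \vee h_F(y)$. Hence $h_F : {\bf A} \to {\bf 2}$ is a homomorphism of Boolean algebras.

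The single non-routine ingredient is the ultrafilter lemma used in clause~(1), which rests on Zorn's Lemma: in the poset of proper filters containing $\sneg a$ ordered by inclusion, the union of any chain is again a proper filter, so maximal elements exist, and a maximal proper filter is readily seen to be an ultrafilter. I expect no genuine obstacle here, as this is a standard textbook result; in the paper it will presumably be stated with a citation such as~\cite{giv:hal:2009} rather than proved in full detail.
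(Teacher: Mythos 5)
Your proof is correct and complete; the paper itself gives no proof of this proposition, simply recalling it as a classical fact with a pointer to~\cite{giv:hal:2009}, exactly as you anticipated. Your argument --- extending the proper principal filter generated by $\sneg a$ to an ultrafilter via Zorn's Lemma for clause (1), and the direct verification that $h_F$ preserves $0$, $1$, meets and complements, with joins handled by De Morgan, for clause (2) --- is the standard textbook proof and contains no gaps.
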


\begin{teo}  [Soundness and completeness of $\hachedost$  w.r.t.  $\bededos_T$] \ \\ \label{adequacy}
Let $\Gamma \cup \{\varphi\}$ be a set of formulas in $For(\mathbb{P}_T)$. Then, $\Gamma \mabedost \varphi$ iff $\Gamma \mobedos_T \varphi$. 
\end{teo}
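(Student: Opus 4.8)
The plan is to reduce the four-valued adequacy statement to the already-established Theorem~\ref{sound-compl-twist} (soundness and completeness of $\hachedost$ with respect to the full class of twist structures over arbitrary Boolean algebras) by means of the two classical facts about Boolean algebras recalled in Proposition~\ref{BAP}. The bridge is the observation, already noted in the excerpt, that $\mathcal{M}_{\bf 2}$ is, up to renaming of the truth values via the identifications $1=(1,0)$, $\bebe=(1,1)$, $\nene=(0,0)$, $0=(0,1)$, exactly the matrix $\mm M_{\bededos_T}$ defining $\bededos_T$. So it suffices to prove that $\Gamma \sdtstile{\bededos}{T} \varphi$ iff $\Gamma \sdtstile{\bf 2}{T} \varphi$, and then chain this with Theorem~\ref{sound-compl-twist}.

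The direction $\Gamma \sdtstile{\bededos}{T} \varphi \Rightarrow \Gamma \sdtstile{\bf 2}{T} \varphi$ is immediate, since by definition $\sdtstile{\bededos}{T}$ quantifies over \emph{all} Boolean algebras {\bf A}, and {\bf 2} is one of them. For the converse I would argue contrapositively: suppose $\Gamma \not\sdtstile{\bededos}{T} \varphi$, so there is some Boolean algebra {\bf A} and a valuation $v=(v_1,v_2)$ over $\mathcal{M}_{\bf A}$ with $v(\gamma) \in D_{\bf A}$ for all $\gamma \in \Gamma$ but $v(\varphi) \notin D_{\bf A}$; the latter means $v_1(\varphi) \neq 1$ in {\bf A}. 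By Proposition~\ref{BAP}(1) pick an ultrafilter $F$ of {\bf A} with $v_1(\varphi) \notin F$, and by Proposition~\ref{BAP}(2) let $h=h_F: {\bf A} \to {\bf 2}$ be the induced Boolean homomorphism. Then $h$ lifts coordinatewise to a map $\hat h:\mathcal{T}_{\bf A}\to\mathcal{T}_{\bf 2}$, $\hat h(a,b)=(h(a),h(b))$, which is a homomorphism of $\mathbb{P}_T$-algebras: this is exactly because the twist operations (1)--(5) are defined using only the Boolean operations $\sqcap,\sqcup,{\sim},\Rightarrow,\Leftrightarrow$ of the underlying algebra, which $h$ preserves. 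Hence $w := \hat h \circ v$ is a valuation over $\mathcal{M}_{\bf 2}$, with $w=(h\circ v_1, h\circ v_2)$. Since $h(v_1(\gamma))=1$ for each $\gamma\in\Gamma$ (as $v_1(\gamma)=1\in F$) we get $w(\gamma)\in D_{\bf 2}$; and $h(v_1(\varphi))=0$ since $v_1(\varphi)\notin F$, so $w(\varphi)\notin D_{\bf 2}$. Therefore $\Gamma \not\sdtstile{\bf 2}{T} \varphi$, completing the contrapositive.

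Putting the pieces together: $\Gamma \mabedost \varphi$ iff $\Gamma \sdtstile{\bededos}{T} \varphi$ (Theorem~\ref{sound-compl-twist}) iff $\Gamma \sdtstile{\bf 2}{T} \varphi$ (the equivalence just proved) iff $\Gamma \mobedos_T \varphi$ (since $\mathcal{M}_{\bf 2}$ and $\mm M_{\bededos_T}$ are the same matrix up to renaming truth values), which is the desired statement.

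The only point requiring genuine care — and thus the main obstacle — is the verification that $D_{\bf A}=\{(1,a):a\in A\}$ is the ``right'' set of designated values, i.e. that membership in $D_{\bf A}$ depends only on the first coordinate, so that the ultrafilter argument on $v_1$ alone suffices; and, relatedly, checking that $\hat h$ genuinely commutes with $\tilde{\copi{}}$ and $\tilde{\to}$, whose clauses (4)--(5) involve $\Leftrightarrow$ and nested $\Rightarrow$'s. Both are routine once one writes them out, since Boolean homomorphisms preserve all derived Boolean operations, but they are where the proof actually has content. Everything else is bookkeeping with definitions already in place.
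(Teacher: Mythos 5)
Your proposal is correct and follows essentially the same route as the paper: both directions reduce to Theorem~\ref{sound-compl-twist}, with the nontrivial step being the collapse from an arbitrary Boolean algebra ${\bf A}$ to ${\bf 2}$ via an ultrafilter $F$ missing $v_1(\varphi)$ and the coordinatewise lift of the characteristic homomorphism $h_F$ to a twist-structure valuation, exactly as in the paper's completeness argument. Your repackaging of this as the standalone equivalence $\Gamma \sdtstile{\bededos}{T} \varphi$ iff $\Gamma \sdtstile{\bf 2}{T} \varphi$ is only a cosmetic difference in presentation.
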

\begin{proof} \ \\
{\em (Soundness)}: It follows from Theorem~\ref{sound-compl-twist} (Soundness) and from the fact that the four-valued logical matrix  $\bededos_T$  coincides (up to names) with  $\mathcal{M}_{\bf 2}$, the logical matrix associated to the Boolean algebra {\bf 2}.\\[1mm]
{\em (Completeness)}:  Assume that $\Gamma \not{\mabedost} \varphi$. By using completeness of $\hachedost$ w.r.t. twist structures semantics (see Theorem~\ref{sound-compl-twist}), it follows that $\Gamma \not\sdtstile{\bededos}{T} \varphi$. This means that there exists a Boolean algebra {\bf A} and a valuation $v$ over  $\mathcal{M}_{\bf A}$ such that $v(\gamma) \in D_{\bf A}$ for every $\gamma \in \Gamma$, but $v(\varphi) \notin D_{\bf A}$. That is, $v_1(\gamma)=1$ for every $\gamma \in \Gamma$, but $v_1(\varphi) \neq 1$. By Proposition~\ref{BAP} item~(1), there exists an ultrafilter $F$ over {\bf A} such that $v_1(\varphi) \notin F$. Let $h_F:A \to \{0,1\}$ be the characteristic map of $F$. By Proposition~\ref{BAP}  item~(2), $h_F$ is a homomorphism of Boolean algebras between {\bf A} and the two-element Boolean algebra ${\bf 2}$. Define now the function $\bar{v}:For(\mathbb{P}_T) \to {\bf 2} \times {\bf 2}$ given by $\bar{v}(\alpha)=(h_F(v_1(\alpha)),h_F(v_2(\alpha)))$. Taking into account that $h_F$ is a homomorphism of Boolean algebras, that   $v$ is a valuation over  $\mathcal{M}_{\bf A}$, and by the definition of the operations in the twist structure  $\mathcal{T}_{\bf 2}$,  it is easy to prove that $\bar{v}$ is a valuation over the matrix  $\mathcal{M}_{\bf 2}$ (that is, over the four-valued matrix of $\bededos_T$, up to names).
Moreover, the valuation $\bar{v}$ is  such that $\bar{v}(\gamma) \in D_{\bf 2}$ for  every $\gamma \in \Gamma$  (since $v_1(\gamma)=1$, hence $h_F(v_1(\gamma)) =1$), but $\bar{v}(\varphi) \notin D_{\bf 2}$ (since $v_1(\varphi) \notin F$, hence $h_F(v_1(\varphi)) =0$). From this, $\Gamma \not\models_{\mathcal{M}_{\bf 2}} \varphi$. That is, $\Gamma \not\mobedos_T \varphi$. 
\end{proof}

\begin{propo}\label{esq-validos} The following schemes are provable in $\hachedos_T$.
		\begin{itemize}
			\item[(i)] $(\copi\alpha\imp\alpha)\imp\alpha$
			\item[(ii)] $\big((\no\alpha\imp\alpha)\wedge(\no\copi\alpha\imp\alpha)\big)\imp\alpha$
			\item[(iii)] $(\alpha\wedge\copi\alpha\wedge\no\copi\alpha)\imp\beta$
			\item[(iv)] $(\no\copi\alpha\imp\copi\alpha)\imp\copi\alpha$
			\item[(v)] $(\copi\alpha\imp\no\alpha)\imp\no\alpha$
			\item[(vi)]
			$\copi\alpha\sii\copi\no\alpha$
			\item[(vii)] $\no\copi\alpha\sii\no\copi\no\alpha$
			\item[(viii)] $(\alpha\imp\copi\copi\alpha)\wedge(\no\alpha\imp\copi\copi\alpha)$
			\item[(xi)] $(\alpha\wedge\no\alpha)\sii(\no\copi\alpha\wedge\copi\copi\alpha)$
			\item[(x)] $\no\alpha\imp\no(\alpha\wedge\beta)$
			\item[(xi)] $(\no\alpha\wedge\copi\alpha)\imp\copi(\alpha\wedge\beta)$
			\item[(xii)] $(\copi\alpha\wedge\no\copi\alpha)\imp\copi(\alpha\wedge\beta)$
			\item[(xiii)] $(\copi\alpha\wedge\copi\beta)\imp\copi(\alpha\wedge\beta)$
			\item[(xiv)] $\big((\alpha\wedge\copi\alpha)\wedge\no\copi\beta\big)\imp\no\copi(\alpha\wedge\beta)$
			\item[(xv)]
			$\big((\copi\alpha\wedge\no\copi\alpha)\wedge(\copi\beta\wedge\no\copi\beta)\big)\imp\no\copi(\alpha\wedge\beta)$		
		\end{itemize}
	\end{propo}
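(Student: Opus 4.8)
The plan is to argue semantically, exploiting the completeness of $\hachedost$ with respect to the four-valued matrix established in Theorem~\ref{adequacy}. By that theorem a formula $\varphi$ satisfies $\mabedost \varphi$ if and only if $\mobedos_T \varphi$, i.e. if and only if $v(\varphi) \in \set{1,\bebe}$ for every $\mm M_\bededos$-valuation $v$. Since each of the fifteen schemes involves at most the propositional variables $\alpha$ and $\beta$, proving it amounts to a finite truth-table computation over $\set{1,\bebe,\nene,0}$, using the tables for $\neg$, $\imp$ and $\copi$ from Table~\ref{alg-bededos} together with the meet and join of {\bf DM4} (in particular $\bebe\wedge\nene=0$ and $\bebe\vee\nene=1$). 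The proof is just the execution of these computations; what follows indicates how I would keep them short.

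For the one-variable items (i)--(viii) and the biconditional $(\alpha\wedge\no\alpha)\sii(\no\copi\alpha\wedge\copi\copi\alpha)$ I would proceed as follows. The Peirce-type conditionals (i), (iv), (v), each of the shape $(\delta\imp\chi)\imp\chi$ with $\delta,\chi$ among $\alpha,\no\alpha,\copi\alpha$, are handled by computing value by value the $v(\alpha)$ for which the antecedent $\delta\imp\chi$ is designated and checking that $\chi$ is then designated as well. Item (iii), $(\alpha\wedge\copi\alpha\wedge\no\copi\alpha)\imp\beta$, is immediate once one observes that $\copi\alpha\wedge\no\copi\alpha$ equals $\bebe$ when $v(\alpha)=\nene$ and $0$ otherwise, so that the whole antecedent is identically $0$ and the conditional is valid by the $0$-row of the $\imp$-table; similarly (x), $\no\alpha\imp\no(\alpha\wedge\beta)$, follows from axiom \pos 6 together with {\bf (DM2)}. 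The equivalences (vi) $\copi\alpha\sii\copi\no\alpha$ and (vii) $\no\copi\alpha\sii\no\copi\no\alpha$ reduce to the remark, read off Table~\ref{alg-bededos}, that $\copi\no x=\copi x$ for every $x$, so that the two sides of each biconditional always receive the same value and $x\sii x$ is always designated. The remaining one-variable schemes (ii), (viii) and the last biconditional are dispatched by a plain four-row check, noting for instance that $\copi\copi x$ equals $1$ except at $x=\nene$, where it is $0$.

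For the two-variable items (xi)--(xv), all of the form ``a condition on $\alpha$, and in (xiv)--(xv) also on $\beta$, implies a statement about $\alpha\wedge\beta$'', I would first record the value of $\copi(\alpha\wedge\beta)$ as a function of $v(\alpha)$ and $v(\beta)$ (and recall $\no(\alpha\wedge\beta)=\no\alpha\vee\no\beta$ by {\bf (DM2)}), and then note that each antecedent is designated only for a handful of value-assignments, on each of which one simply reads off the consequent. For example in (xii) the antecedent $\copi\alpha\wedge\no\copi\alpha$ is designated only when $v(\alpha)=\nene$, and then $\copi(\alpha\wedge\beta)\in\set{\bebe,1}$ regardless of $v(\beta)$; the antecedents in (xi), (xiii), (xiv), (xv) are at least as restrictive, so the same bookkeeping closes those cases.

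There is no conceptual obstacle: the whole proposition is a bounded finite verification authorised by Theorem~\ref{adequacy}. The only place requiring care is computing the derived connectives correctly --- in particular $\copi\copi\alpha$ and $\copi(\alpha\wedge\beta)$ as functions of the values of the variables --- and making sure, in (xi)--(xv), that one has identified \emph{all} the value-assignments for which the antecedent is designated. Alternatively one could give Hilbert-style derivations inside $\hachedost$ (several items are Peirce's law applied to defined connectives, (iii) is an instance of {\bf ($\copi$-1)} modulo {\bf (DM2)} and positive logic, and so on), but the semantic route is shorter and is the one I would take.
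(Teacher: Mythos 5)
Your proposal is correct and follows essentially the same route as the paper: the authors likewise observe that each of (i)--(xv) is a tautology of the four-valued matrix $\bededos_T$ and invoke the completeness theorem (Theorem~\ref{adequacy}) to conclude provability in $\hachedost$. Your spot-checks of the derived connectives ($\copi\no x=\copi x$, $\copi\copi x$, and $\copi\alpha\wedge\no\copi\alpha$) are accurate, so the finite verification goes through as you describe.
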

\begin{proof}
It is easy to prove that each schema (i)-(xv) is a tautology in $\bededos_T$. Hence, they are provable in  $\hachedos_T$, by Theorem~\ref{adequacy}.
\end{proof}

\section{The first-order logic $\cube$}

Consider the language $\rr L=\langle \mm Q,{\bb P}_T,\mm S\rangle$ where $\mm Q=\set{\pt,\ex}$,  $\mathbb{P}_T=\{\land, \vee, \to, \neg,\copi\}$ is the propositional signature of $\bededos_T$ and $\mm S=\mm P\cup\mm F\cup \mm C$ is a first-order signature formed by the (disjoint) sets  $\mm P\neq\emptyset$, $\mm F\neq\emptyset$ and $\mm C$ containing predicate, function and constant symbols, respectively. Besides, let $\mm V$ be a denumerable set of variable symbols. We denote by $\for(\mm S)$ the set of all (well-formed) formulas of $\rr L$, $\sent(\mm S)$ the set of all closed formulas of $\rr L$, $\ter(\mm S)$ the set of all terms of $\rr L$ and ${\sf clo}(\mm S)$ the set of all closed terms of $\rr L$.

Given $\varphi\in\for(\mm S)$ we denote by $\libre \varphi$ the set of all variables that occur free in  $\varphi$ and by $\libre{x,\varphi}$ the set of all terms free for the variable $x$ in $\varphi$. The complexity of a formula (term) is defined as usual.
\

\begin{defi}\label{parcial}
	We call a $\cuatro$-structure (or partial structure) over the first-order signature  $\mm S=\brac{\mm P,\mm F,\mm C}$ to any pair
	$$\ff A=\brac{A,(\cdot)^\ff A}$$
	where $A\neq \emptyset$ and $(\cdot)^\ff A$ is a map such that :
	\begin{itemize}
		\item If $P\in\mm P$ is an $n$-ary predicate symbol, then 
		$P^\ff A: A^n\to\cuatro$
		\item If $f\in\mm F$ is an $n$-ary function symbol, then 
		$f^\ff A: A^n\to A$
		\item If $c\in\mm C$, then $c^\ff A\in A$
	\end{itemize}
	
\end{defi}

\

\begin{defi}\label{asig-valter}
Given a non-empty set $X$, an {\em assignment} on $X$ is a map $s:\mm V\to X$. If $\ff A$ is a \cuatro-structure over the signature $\mm S$ then an {\em assignment on $\ff A$} is an assignment on the domain $A$ of the \cuatro-structure $\ff A$. We denote by $\ese A$ the set of all assignment on $\ff A$, i.e. $\ese A=A^\mm V$.

Given $s\in \ese A$ and $t\in\ter(\mm S)$, the {\em value of the term} $t$  {\em in $\ff A$ by the assignment $s$} (denoted $t^\ff A[s]$) is defined inductively as follows:
\begin{itemize}
	\item If $t\in\mm V$, then $t^\ff A[s]=s(t)$.
	\item If $t\in\mm C$, then $t^\ff A[s]=t^\ff A$.
	\item If $t$ is $f(t_1,\ldots,t_n)$, where  $f\in\mm F$ and $t_i\in\ter(\mm S)$, then $t^\ff A[s]=f^\ff A\left(t_1^\ff A[s],\dotsc,t_n^\ff A[s]\right)$.
\end{itemize}
\end{defi}

\begin{nota} Given $s\in\ese A$, $x\in\mm V$ and $a\in A$, we denote by $s_x^a$ the assignment over $\ff A$ that satisfies
$$
s_x^a(y) \ = \ 
\left\{
\begin{array}{ll}
	s(y)&\mbox{if } y\neq x\\
	a& \mbox{if } y=x
\end{array}
\right.
$$
\end{nota}

\begin{defi}\label{cube-estruc}
	A $\cube$-structure over $\mm S$ is a pair  
	$$\brac{\ff A,\valbede\cdot }$$
	where $\ff A$ is a \cuatro-structure over $\mm S$ and $\valbede\cdot$ is map defined inductively as follows, for every $s\in\ese A$
	\begin{enumerate}
		\item 
		$\valbede{P(t_1,\dotsc,t_n)}(s) \ = \ P^\ff A\left(t_1^\ff A[s],\dotsc,t_n^\ff A[s]\right)$, where $P\in\mm P$ is an $n$-ary predicate symbol,
		\item 
		$\valbede{\#\varphi}(s) \ = \ \# \valbede\varphi(s)$, where $\#\in\set{\copi,\neg}$,
		\item $\valbede{\varphi\mathop{\#}\psi}(s)=\valbede\varphi(s)\mathop{\#}\valbede\psi(s)$, where $\#\in\set{\wedge,\vee,\imp}$,
		\item $\valbede{\pt x\varphi}(s)=\inf\left(\set{\valbede\varphi(s_x^a) \ : \  a\in A}\right)$,
		\item $\valbede{\ex x\varphi}(s)=\sup\left(\set{\valbede\varphi(s_x^a) \ : \  a\in A}\right)$.
	\end{enumerate}
\end{defi}

\

\begin{nota} \label{DM-infty}
Given that {\bf 4} is a finite lattice, it is complete and it satisfies the following, for every $\{a_i \ : \ i \in I\} \subseteq {\bf 4}$: $\neg \inf_{i \in I} a_i= \sup_{i \in I} \neg a_i$ and  $\neg \sup_{i \in I} a_i= \inf_{i \in I} \neg a_i$. From this,  $\neg \inf_{i \in I} \neg a_i= \sup_{i \in I} a_i$. 
\end{nota}


\begin{defi}\label{modelos} Let $\ff A$ be a $\cube$-structure over $\mm S$ and $\varphi\in\for(\mm S)$.
     We say that $s\in\ese A$ 
     {\em satisfies $\varphi$ in the $\cube$-structure $\ff A$} 
     if 
     $\valbede\varphi(s)\in \set{1,\bebe}=\mm D_\bededos$. 
     We say that $\ff A$ is a  {\em $\cube$-model} of $\varphi$ if every assignment on $\ff A$ satisfies $\varphi$. We denote by ${\rm mod}_\cube (\varphi)$ the class of all $\cube$-models of $\varphi$. Let $\Gamma\subseteq \for(\mm S)$, we say that $\ff A$ is a  {\em \cube-model} for  $\Gamma$ if $\ff A\in\bigcap_{\gamma\in\Gamma}{\rm mod}_\cube(\gamma)$; we denote by ${\rm mod}_\cube(\Gamma)$ the class of all models of $\Gamma$, i.e. ${\rm mod}_\cube(\Gamma)=\bigcap_{\gamma\in\Gamma}{\rm mod}_\cube(\gamma)$.
\end{defi}

\begin{defi}\label{martilloprimer}
	The logic $\cube$ is the pair $\brac{\rr L(\pt,\ex,{\bb P}_T,\mm S),\, \mocube}$ where the consequence operator $\mocube$ is defined as follows
	\begin{center}
		$\Delta\mocube\varphi$ \ iff \ ${\rm mod}_\cube(\Delta)\subseteq{\rm mod}_\cube(\varphi)$ 
	\end{center}
	in this case, we say that $\varphi$ is a {\em $\cube$-consequence} of $\Delta$ and, if $\Delta=\emptyset$, we say that $\varphi$ is {\em $\cube$-valid.}
\end{defi}

\

\begin{teo}\label{martillo-sem} Let $\ff A$ be a $\cube$-estructura and let $s\in\ese A$.
	\
	
	\begin{enumerate}
		\item[] $\valbede{\ex x\varphi}(s)\in\set{1,\bebe}$ implies $\valbede{\varphi}(s_x^b)$ for some $b\in A$
	\end{enumerate}
\end{teo}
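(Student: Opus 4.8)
The plan is to reduce the statement to a purely order-theoretic fact about the four-element Belnap lattice $\cuatro$ together with its designated set $\mm D_\bededos=\set{1,\bebe}$, and then to invoke clause~(5) of Definition~\ref{cube-estruc}, which gives $\valbede{\ex x\varphi}(s)=\sup\set{\valbede\varphi(s_x^a) : a\in A}$.

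First I would record the key observation: in $\cuatro$, the set $\mm D_\bededos=\set{1,\bebe}$ is the principal filter $\uparrow\bebe$, and it is \emph{prime}, i.e.\ for all $x,y\in\cuatro$ one has $x\vee y\in\mm D_\bededos$ if and only if $x\in\mm D_\bededos$ or $y\in\mm D_\bededos$. This is a finite case check: the complement $\set{0,\nene}$ of $\mm D_\bededos$ is closed under $\vee$ (indeed $0\vee 0=0$, $0\vee\nene=\nene\vee 0=\nene$ and $\nene\vee\nene=\nene$), so a join lands in $\mm D_\bededos$ only when one of the joinands already does; the converse direction is immediate because $\mm D_\bededos$ is an up-set.

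Next, since $\cuatro$ is finite (hence complete), the set $S=\set{\valbede\varphi(s_x^a) : a\in A}\subseteq\cuatro$ has only finitely many distinct elements, say $v_1,\dots,v_k$ with $1\le k\le 4$, and for each $i$ we may choose $b_i\in A$ with $\valbede\varphi(s_x^{b_i})=v_i$ (here $k\ge 1$ since $A\neq\emptyset$). Then $\valbede{\ex x\varphi}(s)=\sup S=v_1\vee\cdots\vee v_k$. Assuming $\valbede{\ex x\varphi}(s)\in\mm D_\bededos$, a straightforward induction on $k$ using primeness yields an index $i$ with $v_i\in\mm D_\bededos$; that is, $\valbede\varphi(s_x^{b_i})\in\set{1,\bebe}$, which is exactly the required conclusion with $b=b_i$.

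There is no serious obstacle here; the only point needing a little care is the reduction of the possibly infinite supremum indexed by $A$ to a finite join, which is licensed solely by the finiteness of $\cuatro$. (Remark~\ref{DM-infty} would permit an alternative route passing through $\neg$ and the dual statement for $\pt$, but the prime-filter argument above is the most direct.)
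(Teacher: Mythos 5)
Your proof is correct and follows the same route the paper intends: unfold Definition~\ref{cube-estruc}(5) to write $\valbede{\ex x\varphi}(s)$ as the supremum of $\set{\valbede{\varphi}(s_x^a) : a\in A}$ and then argue inside the finite lattice $\cuatro$. The paper's own proof is only a one-line citation, so your explicit observation that $\mm D_\bededos=\set{1,\bebe}$ is a prime filter (its complement $\set{0,\nene}$ being closed under $\vee$), together with the reduction of the supremum to a finite join, supplies exactly the detail the paper leaves implicit.
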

\begin{proof}
 It is consequence of Definition~\ref{cube-estruc}(5) and Remark~\ref{DM-infty}.
\end{proof}

\

\begin{propo}\label{def-ex-pt}
	The quantifier $\ex$ is definable from $\pt$ and $\neg$ in $\cube$ as  $\ex x\varphi:=\existe x\varphi$. 
\end{propo}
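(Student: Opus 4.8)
The plan is to show that $\valbede{\ex x\varphi}(s) = \valbede{\no\pt x\no\varphi}(s)$ for every $\cube$-structure $\ff A$ and every assignment $s\in\ese A$; since membership in $\mm D_\bededos$ is determined by the truth value, this equality immediately gives ${\rm mod}_\cube(\ex x\varphi) = {\rm mod}_\cube(\no\pt x\no\varphi)$, which is exactly what it means for the two formulas to be interchangeable as $\cube$-consequences of each other (indeed they are logically equivalent in $\cube$). First I would unfold the right-hand side using Definition~\ref{cube-estruc}: $\valbede{\no\pt x\no\varphi}(s) = \neg\,\valbede{\pt x\no\varphi}(s) = \neg\inf\set{\valbede{\no\varphi}(s_x^a) : a\in A} = \neg\inf\set{\neg\valbede{\varphi}(s_x^a) : a\in A}$.

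The key step is then to invoke Remark~\ref{DM-infty}: setting $a_i := \valbede{\varphi}(s_x^{a})$ ranging over $a\in A$, that remark states precisely $\neg\inf_{i\in I}\neg a_i = \sup_{i\in I} a_i$. Hence $\valbede{\no\pt x\no\varphi}(s) = \sup\set{\valbede{\varphi}(s_x^a) : a\in A} = \valbede{\ex x\varphi}(s)$ by Definition~\ref{cube-estruc}(5). This establishes the pointwise equality of truth values, and therefore $\valbede{\ex x\varphi}(s)\in\mm D_\bededos$ iff $\valbede{\no\pt x\no\varphi}(s)\in\mm D_\bededos$, so the two formulas have the same $\cube$-models; consequently $\ex x\varphi \mocube \no\pt x\no\varphi$ and $\no\pt x\no\varphi \mocube \ex x\varphi$, i.e. $\ex$ is definable from $\pt$ and $\neg$ via $\ex x\varphi := \existe x\varphi$ (recalling that $\existe x$ abbreviates $\no\pt x\no$).

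There is essentially no serious obstacle here: the only subtlety is that the De Morgan law for infinitary sup/inf in {\bf 4} is needed rather than just the binary De Morgan laws, and this is exactly what Remark~\ref{DM-infty} was recorded for — the completeness of the finite lattice {\bf 4} guarantees the infima and suprema in Definition~\ref{cube-estruc}(4)--(5) exist, and the distributivity-type identity $\neg\inf_i\neg a_i = \sup_i a_i$ holds for arbitrary index sets. One should perhaps remark explicitly that the set $\set{\valbede{\varphi}(s_x^a) : a\in A}$ is nonempty (since $A\neq\emptyset$) and finite (a subset of {\bf 4}), so no completeness assumption beyond finiteness is even needed. I would present the argument as the short chain of equalities above, citing Definition~\ref{cube-estruc} for the clauses on $\neg$, $\pt$ and $\ex$, and Remark~\ref{DM-infty} for the crucial middle equality.
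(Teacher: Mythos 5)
Your proof is correct and follows exactly the route the paper takes: the paper's own (one-line) proof simply cites Definition~\ref{cube-estruc}(5) and Remark~\ref{DM-infty}, and your chain of equalities $\valbede{\no\pt x\no\varphi}(s)=\neg\inf\set{\neg\valbede{\varphi}(s_x^a) : a\in A}=\sup\set{\valbede{\varphi}(s_x^a) : a\in A}=\valbede{\ex x\varphi}(s)$ is precisely the intended expansion of that citation. Your added observations (nonemptiness of $A$, finiteness of the value set) are harmless refinements of the same argument.
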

\begin{proof}
It is consequence of Definition~\ref{cube-estruc}(5) and Remark~\ref{DM-infty}.
\end{proof}

\begin{nota} \label{not-strongeq}
The formulas $\ex x\varphi$ and $\nof\pt x\nof\varphi$ are not strongly equivalent in $\cube$, that is: despite being equivalent, their denotations are not necessarily identical in any structure. To see this, consider the next example: \ consider the first-order signature $\mm S=\brac{\set P,\emptyset,\emptyset}$ where $P$ is an unary predicate symbol and consider the $\cube$-structure $\ff A$  where its domain is $A=\set{a,b}$ and such that  
\begin{align*}
	P^\ff A(a)&=0&
	P^\ff A(b)&=\bebe
\end{align*}
Then $\valbede{\ex xP(x)}=\sup\set{0,\bebe}=\bebe$, but, on the other hand, 
\begin{align*}
\valbede{\nof\pt x\nof\varphi}
&=\nof\left(\inf\set{\nof 0,\nof\bebe}\right)\\
&=\nof\left(\inf\set{1,0}\right)\\
&=\nof 0=1\\
\end{align*}
\end{nota}

\

\section{The deductive system $\hachecube$}

In this section, we shall present a syntactic version of $\cube$ in terms of the Hilbert-style calculus $\hachecube$ which extends $\hachedost$. Let $\mathbb{P}_T=\{\land, \vee, \to, \neg,\copi\}$, then:
\begin{defi}\label{sistema}
Consider the first-order signature $ \mm S=\langle\mm P,\mm F,\mm C\rangle$. The Hilbert-style calculus  $\hachecube$ over the language $\rr L(\pt,{\bb P}_T,\mm S)$ is the extension of  $\hachedost$, expressed in the language $\rr L(\pt,\bb P,\mm S)$), by adding the following:
	\paragraph{Axioms}\hfill
	\begin{description}
		\item[\axi A]   $\pt x\varphi\imp\varphi[x/t],$
		\quad  $t\in \libre{x, \varphi}$   
		\item[\axi B]   $\no\pt x\varphi\imp\no\pt x\no\no\varphi$
		\item[\axi C]   $\no\varphi[x/t]\imp\no\pt x\varphi$
		\quad $t\in \libre{x, \varphi}$  
	\end{description}
	\paragraph{Rules of inference}
	\begin{description}
		\item[($\pt$-{\bf In1})] \ $\displaystyle\frac{\alpha\imp\beta}{\alpha\imp\pt x\beta},$
		\quad 
		 $x\not\in \libre{\alpha}$
		\item[($\pt$-{\bf In2})] \ $\displaystyle\frac{\beta\imp\alpha}{\no\pt x\no \beta\imp\alpha},$
		\quad 
		$x\not\in \libre{\alpha}$
	\end{description}
\end{defi}

\begin{nota} We choose to work with just the universal quantifier. Though (as it was established in the previous section) $\pt$ and $\ex$ are  semantically interdefinable by means of the paraconsistent negation $\no$, that is
$$
\ex x\psi\equiv\no\pt x\no\psi
$$
we were not able to prove axioms such as  \axi B or the rule of introduction of $\ex$.  This is due to fact that the paraconsistent negation $\no$ enjoys less ``good'' properties than the classic negation. On the other hand, the strong negation $\nof$ is classic, that is, it is  explosive and complete; and so, one would be tempted to recover the axioms and rules that involve $\ex$ using $\nof$ instead of $\no$ to define $\ex$. But we have the problem that formulas such as $\ex x\alpha$ and $\nof\pt x\nof\alpha$ are not (strongly) equivalent, as pointed out in Remark~\ref{not-strongeq}.
\end{nota}

\begin{nota}\label{instancia}
Let $\mm S$ be a first-order signature. An {\em instance} of a propositional formula  $\varphi(p_1,\dotsc,p_n)$ in $\rr L(\bb P_0)$ is a formula in $\rr L(\mm S)$ obtained from  $\varphi$ by performing a simultaneous substitution of each ocurrence of the propositional variable $p_i$ by the formula $\beta_i$ in $\rr L(\mm S)$, for $1\leq i\leq n$. Let us denote by $\varphi[\beta_1/p_1,\dotsc,\beta_n/p_n]$ such instance.
\end{nota}

\begin{teo}\label{preserva-instancia}
	If  $\varphi\in\for(\mm S)$ is an instance of a formula $\bededos$-valid, then $\macube\varphi$.
\end{teo}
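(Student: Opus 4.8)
The plan is to reduce the statement to the propositional completeness result of Theorem~\ref{adequacy} via a standard ``forgetting the first-order structure'' argument. Suppose $\varphi \in \for(\mm S)$ is an instance of a $\bededos$-valid propositional formula $\psi(p_1,\dots,p_n)$, say $\varphi = \psi[\beta_1/p_1,\dots,\beta_n/p_n]$ for suitable $\beta_i \in \for(\mm S)$. By Theorem~\ref{adequacy}, $\psi$ is provable in $\hachedost$; fix such a derivation. The key observation is that every axiom of $\hachedost$ is a \emph{schema} over the propositional signature $\bb P_T$, and the propositional connectives of $\bb P_T$ occur among the connectives of $\rr L(\pt,\bb P_T,\mm S)$, so any substitution instance of a $\hachedost$-axiom (substituting $\rr L(\mm S)$-formulas for the schematic letters) is again an axiom of $\hachecube$ — recall that $\hachecube$ is defined as an \emph{extension} of $\hachedost$ expressed in the richer language, so its axiom schemata \pos1--\pos9, {\bf (DNeg)}, {\bf (DM1)}--{\bf (DM3)}, {\bf ($\copi$-1)}--{\bf ($\copi$-3)} range over all formulas of $\rr L(\mm S)$.

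The main step is then: transform the given $\hachedost$-derivation of $\psi$ into an $\hachecube$-derivation of $\varphi$ by applying the substitution $[\beta_1/p_1,\dots,\beta_n/p_n]$ uniformly to every line. I would argue by induction on the length of the $\hachedost$-proof of $\psi$. If a line is an instance of a $\hachedost$-axiom schema, its image under the substitution is again an instance of the corresponding $\hachecube$-axiom schema (now read over $\rr L(\mm S)$), hence derivable. If a line is obtained by (MP) from earlier lines $\chi$ and $\chi \to \xi$, then its image is obtained by (MP) from the images of those lines, which are derivable by the induction hypothesis; note (MP) is a rule of $\hachecube$ as well. Since the last line of the transformed proof is exactly $\psi[\beta_1/p_1,\dots,\beta_n/p_n] = \varphi$, we conclude $\macube \varphi$.

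The only genuinely delicate point — and the one I would spell out carefully — is the bookkeeping about substitution and schematic letters: one must be clear that the propositional variables $p_1,\dots,p_n$ appearing in $\psi$ are treated as \emph{placeholders}, and that substituting $\rr L(\mm S)$-formulas for them commutes with the formation of instances of axiom schemata. Concretely, an instance of, say, {\bf (DM3)} in $\hachedost$ has the form $\neg(\alpha \to \beta) \leftrightarrow ((\neg\alpha \to \alpha) \wedge \neg\beta)$ with $\alpha,\beta$ propositional formulas over $p_1,\dots,p_n$; applying the substitution $\sigma = [\beta_1/p_1,\dots,\beta_n/p_n]$ yields $\neg(\alpha^\sigma \to \beta^\sigma) \leftrightarrow ((\neg\alpha^\sigma \to \alpha^\sigma) \wedge \neg\beta^\sigma)$, which is literally an instance of the {\bf (DM3)} schema of $\hachecube$ with $\alpha^\sigma,\beta^\sigma \in \for(\mm S)$. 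The same remark covers every schema. There is no interaction with the quantifier axioms \axi A--\axi C or the rules $(\pt\text{-}{\bf In1})$, $(\pt\text{-}{\bf In2})$, because the purely propositional derivation never invokes them; this is why the argument goes through cleanly and why no side conditions on free variables arise. I expect this to be the ``obstacle'' only in the sense of requiring precise notation, not in any mathematical sense — the proof is essentially a transfer of a propositional derivation along a language inclusion.
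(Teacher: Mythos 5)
Your proposal is correct and is essentially the paper's own proof: the authors simply cite Mendelson's Proposition~2.1, whose argument is exactly what you spell out — obtain a $\hachedost$-derivation of the propositional formula via Theorem~\ref{adequacy}, then push the substitution $[\beta_1/p_1,\dotsc,\beta_n/p_n]$ through every line, noting that axiom instances map to axiom instances (the schemata of $\hachedost$ are among those of $\hachecube$ and range over all of $\for(\mm S)$) and that (MP) commutes with substitution. Your remark that the quantifier axioms and rules never enter, so no free-variable side conditions arise, is the right observation and is all the care the argument needs.
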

\begin{proof} It is a direct adaptation of \cite[Proposition 2.1]{men}.
\end{proof}

\subsection{Deduction metatheorem for $\hachecube$ }

\begin{defi}\label{depende}
	Let
	$\Gamma\subseteq \for(\mm S)$, 
	$\varphi\in\Gamma$ 
	and let  $\psi_1\ldots\psi_n$ be a deduction of $\psi_n$ from $\Gamma$ in \hachecube.	
	We say that $\psi_i$ {\em depends  upon $\varphi$} in the deduction if it is verified one of the following conditions:
	\begin{itemize}
		\item[(a)] $\psi_i$ is $\varphi$; 
		\item[(b)] $\psi_i$ is a direct consequence by \emepe, {\bf ($\pt$-In1)}\ or {\bf ($\pt$-In2)} of some preceding
formulas of the sequence, where at least one of these preceding formulas depends upon  $\varphi$.
	\end{itemize}
\end{defi}

The proof of the following result can be done exactly as in the case of first-order classical logic, taking into account that $\hachecube$ is an axiomatic extension of that logic. A detailed proof for the classical case can be found, for instance, in~\cite[Proposition~1.9]{men}.

\begin{propo}[Deduction Metatheorem \ (MTD)]\label{Predmdt}
	If in some deduction showing that $\Gamma,\varphi\vdash\psi$ 
	no application of {\bf ($\pt$-In1)}\ and {\bf ($\pt$-In2)}\ 
	to a formula that depends upon $\varphi$ 
	has as its quantified variable a free variable of $\varphi$, 
	then $\Gamma\vdash\varphi\imp\psi$.
\end{propo}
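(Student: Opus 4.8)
The plan is to run the standard induction on the length of a deduction $\psi_1,\dots,\psi_n=\psi$ witnessing $\Gamma,\varphi\vdash\psi$, proving simultaneously that for each $i$ we have $\Gamma\vdash\varphi\imp\psi_i$ \emph{provided} $\psi_i$ does not depend on $\varphi$ in the weaker sense that it suffices to observe: if $\psi_i$ does not depend on $\varphi$, then $\Gamma\vdash\psi_i$ (since the subderivation using only non-$\varphi$-dependent steps is a deduction from $\Gamma$ alone), and then $\Gamma\vdash\varphi\imp\psi_i$ follows from $\psi_i$ together with axiom \pos 1 and \emepe. So the real content is the case where $\psi_i$ \emph{does} depend on $\varphi$.

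First I would treat the base-type cases. If $\psi_i$ is $\varphi$ itself, then $\Gamma\vdash\varphi\imp\varphi$ is a theorem of positive classical logic (derivable from \pos 1 and \pos 2, as in the propositional deduction theorem). If $\psi_i$ is an axiom of $\hachecube$ or a member of $\Gamma$, it does not depend on $\varphi$, so that falls under the remark above. Next, the \emepe\ case: if $\psi_i$ comes from $\psi_j$ and $\psi_k=\psi_j\imp\psi_i$ with $j,k<i$, at least one of which depends on $\varphi$; by the induction hypothesis (and the non-dependent-case remark) we have $\Gamma\vdash\varphi\imp\psi_j$ and $\Gamma\vdash\varphi\imp(\psi_j\imp\psi_i)$, and then $\Gamma\vdash\varphi\imp\psi_i$ follows by \pos 2 and \emepe, exactly as in the classical propositional case.

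The two genuinely new cases are the quantifier rules. Suppose $\psi_i$ is $\alpha\imp\pt x\beta$ obtained from $\psi_j=\alpha\imp\beta$ by {\bf ($\pt$-In1)}, with $x\notin\libre\alpha$, and suppose $\psi_i$ depends on $\varphi$. By hypothesis of the metatheorem, since this application is to a formula depending on $\varphi$, the quantified variable $x$ is \emph{not} free in $\varphi$. By the induction hypothesis $\Gamma\vdash\varphi\imp(\alpha\imp\beta)$, hence $\Gamma\vdash(\varphi\wedge\alpha)\imp\beta$ using the positive-logic equivalence between $\varphi\imp(\alpha\imp\beta)$ and $(\varphi\wedge\alpha)\imp\beta$ (provable from \pos 1--\pos 8). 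Since $x\notin\libre\varphi$ and $x\notin\libre\alpha$, we have $x\notin\libre{\varphi\wedge\alpha}$, so {\bf ($\pt$-In1)}\ applies to give $\Gamma\vdash(\varphi\wedge\alpha)\imp\pt x\beta$, and then re-currying yields $\Gamma\vdash\varphi\imp(\alpha\imp\pt x\beta)$, i.e.\ $\Gamma\vdash\varphi\imp\psi_i$. The case of {\bf ($\pt$-In2)}, where $\psi_i$ is $\no\pt x\no\beta\imp\alpha$ from $\psi_j=\beta\imp\alpha$, is handled dually: from $\Gamma\vdash\varphi\imp(\beta\imp\alpha)$ one wants to move $\varphi$ past the rule; here one should rewrite $\varphi\imp(\beta\imp\alpha)$ as $\beta\imp(\varphi\imp\alpha)$ (again a positive-logic rearrangement), apply {\bf ($\pt$-In2)}\ with side condition $x\notin\libre{\varphi\imp\alpha}$ (valid since $x\notin\libre\varphi$ and $x\notin\libre\alpha$) to obtain $\Gamma\vdash\no\pt x\no\beta\imp(\varphi\imp\alpha)$, and finally rearrange back to $\Gamma\vdash\varphi\imp(\no\pt x\no\beta\imp\alpha)$.

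The main obstacle is bookkeeping rather than conceptual: one must be careful that the induction hypothesis is stated for \emph{every} earlier line (with the dichotomy "depends on $\varphi$ / does not") so that the \emepe\ and quantifier-rule cases can invoke it on their premises, and one must check that each premise of a rule application producing a $\varphi$-dependent conclusion is itself eligible — which it is, by Definition~\ref{depende}(b), since a rule application making $\psi_i$ dependent requires some premise to be dependent, and the others are then covered by the non-dependent subcase. The only place the side condition of the metatheorem is actually used is in the two quantifier-rule cases, exactly to guarantee $x\notin\libre{\varphi\wedge\alpha}$ (resp.\ $x\notin\libre{\varphi\imp\alpha}$); everything else is a verbatim transcription of the classical first-order deduction metatheorem proof (cf.~\cite[Proposition~1.9]{men}), which is legitimate because $\hachecube$ is an axiomatic extension of classical first-order logic over this signature.
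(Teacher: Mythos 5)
Your proposal is correct and is essentially the proof the paper intends: the paper simply defers to the standard induction on derivation length from the classical first-order deduction metatheorem (citing Mendelson), noting that $\hachecube$ is an axiomatic extension of that logic, and your write-up carries out exactly that induction, with the side condition used only in the two quantifier-rule cases via the currying/permutation rearrangements available in positive classical logic. No gaps.
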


\begin{coro}\label{corodmt}
Suppose that there is some deduction of $\Gamma,\varphi\vdash\psi$ involving no application of {\bf ($\pt$-In1)}\ and {\bf ($\pt$-In2)}  in which the quantified variable is free in  $\varphi$. Then, $\Gamma\vdash\varphi\imp\psi$.
\end{coro}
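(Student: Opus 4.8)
The statement to prove is Corollary~\ref{corodmt}, which follows from the Deduction Metatheorem (Proposition~\ref{Predmdt}) just stated.

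The plan is to derive Corollary~\ref{corodmt} as an immediate special case of Proposition~\ref{Predmdt}. Recall the hypothesis of the Metatheorem: one is given a deduction witnessing $\Gamma,\varphi\vdash\psi$ in which no application of \textbf{($\pt$-In1)} or \textbf{($\pt$-In2)} \emph{to a formula depending upon $\varphi$} uses as its quantified variable a variable that is free in $\varphi$. The hypothesis of the Corollary is stronger in an obvious way: it assumes there is a deduction of $\psi$ from $\Gamma,\varphi$ in which \emph{no} application of \textbf{($\pt$-In1)} or \textbf{($\pt$-In2)} at all uses a quantified variable that is free in $\varphi$ --- dropping the restriction ``to a formula that depends upon $\varphi$''. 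So the first (and essentially only) step is to observe that this stronger hypothesis trivially entails the weaker one: if \emph{no} application of the quantifier rules binds a free variable of $\varphi$, then in particular no such application occurring on a formula that depends upon $\varphi$ does so. Hence the premise of Proposition~\ref{Predmdt} is satisfied by the very same deduction.

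The second step is then to invoke Proposition~\ref{Predmdt} directly to conclude $\Gamma\vdash\varphi\imp\psi$, which is exactly the statement of the Corollary. No induction, no case analysis, and no new construction is needed; one simply weakens the hypothesis's side condition. If one wanted to spell things out one could note that the condition ``the quantified variable is free in $\varphi$'' never holds for any rule application in the given deduction, so the implication ``(depends on $\varphi$) $\Rightarrow$ (quantified variable not free in $\varphi$)'' is vacuously true for each application, matching precisely the restriction demanded by the Metatheorem.

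There is no genuine obstacle here: the Corollary is a cosmetic restatement of the Metatheorem under a stronger, more easily checked hypothesis, and the only thing to be careful about is to phrase the logical implication between the two side conditions correctly (the Corollary's condition is the stronger one, so it implies the Metatheorem's condition, not the other way around). The proof is therefore one sentence: apply Proposition~\ref{Predmdt}, whose premise holds because the hypothesis of the Corollary forbids \emph{all} offending quantifier-rule applications, a fortiori those acting on formulas depending upon $\varphi$.
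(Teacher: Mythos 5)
Your proof is correct and is exactly the argument the paper intends (the paper omits the proof as immediate): the corollary's hypothesis forbids every quantifier-rule application whose quantified variable is free in $\varphi$, hence a fortiori those acting on formulas depending upon $\varphi$, so Proposition~\ref{Predmdt} applies directly.
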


\begin{coro}(Deduction Metatheorem for sentences)\label{sendmt}
	If $\varphi$ is a sentence and $\Gamma,\varphi\vdash\psi$, then $\Gamma\vdash\varphi\imp\psi$.
\end{coro}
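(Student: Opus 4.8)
The statement to prove is Corollary~\ref{sendmt}: if $\varphi$ is a sentence and $\Gamma,\varphi\vdash\psi$, then $\Gamma\vdash\varphi\imp\psi$.

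The plan is to derive this as an immediate consequence of the Deduction Metatheorem (Proposition~\ref{Predmdt}). Recall that the MTD imposes a side condition: no application of $(\pt\text{-In1})$ or $(\pt\text{-In2})$ to a formula depending upon $\varphi$ may use, as its quantified variable, a free variable of $\varphi$. First I would observe that if $\varphi$ is a sentence, then $\libre{\varphi}=\emptyset$, so there are simply no free variables of $\varphi$ at all. Hence the side condition is \emph{vacuously} satisfied by any deduction witnessing $\Gamma,\varphi\vdash\psi$: whatever the quantified variable of a given application of $(\pt\text{-In1})$ or $(\pt\text{-In2})$ happens to be, it cannot be a free variable of $\varphi$ because $\varphi$ has none. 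Therefore Proposition~\ref{Predmdt} applies to that deduction and yields $\Gamma\vdash\varphi\imp\psi$.

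So the proof is essentially one line: \emph{Since $\varphi$ is a sentence, $\libre{\varphi}=\emptyset$, so the restriction in Proposition~\ref{Predmdt} is trivially met by any deduction of $\psi$ from $\Gamma\cup\set\varphi$; apply that proposition.} One could equally invoke Corollary~\ref{corodmt} if one prefers to phrase the argument in terms of a specific deduction, but going straight through the MTD is cleanest. There is no real obstacle here; the only thing to be careful about is to state explicitly \emph{why} the hypothesis ``$\varphi$ is a sentence'' discharges the side condition, namely that a sentence has empty set of free variables, so that the reader sees the corollary is not merely asserted but genuinely reduced to the already-established metatheorem.

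If one wanted to be maximally self-contained one could instead re-run the induction on the length of the deduction directly, but that would duplicate the work already done for Proposition~\ref{Predmdt} and is unnecessary; the corollary is meant to be a painless specialization.
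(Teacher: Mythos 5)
Your proof is correct and is exactly the intended argument: the paper states this as an immediate corollary of Proposition~\ref{Predmdt}, and your observation that a sentence has no free variables, so the side condition on the quantifier rules is vacuously satisfied, is precisely the reason it follows.
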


The following two propositions can be proven as in the case of first-order classical logic and first-order {\bf LFI}s (see~\cite[Chapter 7]{CarCon}).

\begin{propo} The rule of generalization
	\begin{center}
		\gen \ \quad $\varphi\vdash\pt x\varphi$
	\end{center}
is derivable in $\hachecube$.
\end{propo}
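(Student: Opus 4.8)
The statement to prove is that the generalization rule $\gen$: $\varphi \vdash \pt x \varphi$ is derivable in $\hachecube$. The plan is to exhibit an explicit derivation that mimics the standard classical derivation of $\gen$ from the deduction rule $(\pt\text{-}\mathbf{In1})$, which is available because $\hachecube$ is an axiomatic extension of first-order classical logic over the positive fragment plus classical negation (here realized by $\nof$). First I would recall that axiom $\pos 1$ gives $\varphi \imp (\varphi \imp \varphi)$, and that $\macube \varphi\imp\varphi$ is provable (it is an instance of a $\bededos$-valid propositional formula, hence provable by Theorem~\ref{preserva-instancia}, or directly from $\pos 1$ and $\pos 2$).

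The key steps, in order, are as follows. Assume $\varphi$ as a hypothesis. Step one: derive $(\varphi\imp\varphi)\imp\varphi$ from $\varphi$; this is an immediate consequence of $\pos 1$ (namely $\varphi\imp((\varphi\imp\varphi)\imp\varphi)$) together with $\varphi$ and $\emepe$. Actually it is cleaner to proceed as: from $\varphi$ and $\pos 1$ we get $\psi\imp\varphi$ for a suitable $\psi$; choosing $\psi$ to be a formula not containing $x$ free — for instance $\psi := \varphi[x/t]$ would not work, so instead take $\psi := (\copi q \imp \copi q)$ for a fresh propositional-style subformula, or more simply use $\psi := \beta\imp\beta$ for any closed $\beta$. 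Concretely: let $\theta$ be a sentence (say $\theta := p\imp p$ with $p$ any closed atom, or a constant-instance thereof) so that $x \notin \libre\theta$. From $\varphi$ and axiom $\pos 1$, $\varphi\imp(\theta\imp\varphi)$, we obtain $\theta\imp\varphi$ by $\emepe$. Step two: apply the rule $(\pt\text{-}\mathbf{In1})$ to $\theta\imp\varphi$, which is legitimate since $x\notin\libre\theta$; this yields $\theta\imp\pt x\varphi$. Step three: since $\theta$ is a provable sentence (a tautology instance, provable by Theorem~\ref{preserva-instancia} or directly), we have $\vdash\theta$; hence by $\emepe$ applied to $\theta$ and $\theta\imp\pt x\varphi$ we conclude $\pt x\varphi$. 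Assembling the sequence $\varphi,\ \varphi\imp(\theta\imp\varphi),\ \theta\imp\varphi,\ \theta\imp\pt x\varphi,\ \theta,\ \pt x\varphi$ gives the desired deduction $\varphi\vdash\pt x\varphi$ in $\hachecube$.

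The main point to be careful about — and the place where a naive "copy the classical proof" could go wrong — is the side condition on $(\pt\text{-}\mathbf{In1})$: the variable $x$ must not occur free in the antecedent of the implication to which the rule is applied. This forces us to introduce an auxiliary premise $\theta$ with $x\notin\libre\theta$ rather than applying the rule directly to something built from $\varphi$. Since $\hachecube$ contains (instances of) all positive classical tautologies and $\emepe$ is its only propositional rule, finding such a provable $\theta$ is routine; any instance of $\pos 1$ of the form $\theta := \alpha\imp(\beta\imp\alpha)$ with $\alpha,\beta$ built from a closed term is a provable sentence. I expect this to be the only real obstacle, and it is a minor bookkeeping matter rather than a genuine difficulty; everything else is the standard classical argument, applicable verbatim because the propositional base of $\hachecube$ contains positive classical logic and the quantifier rules are exactly the classical ones.
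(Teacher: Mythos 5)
Your derivation is correct and is exactly the standard classical argument that the paper itself invokes (it gives no explicit proof, deferring to the first-order classical/LFI case): from $\varphi$ and {\bf Pos1} obtain $\theta\imp\varphi$ for a provable $\theta$ with $x\notin\libre{\theta}$, apply {\bf($\pt$-In1)}, and detach with $\theta$. The only cosmetic point is that $\theta$ need not be a sentence — any $x$-free provable formula such as $\pt x\varphi\imp\pt x\varphi$ works, so no closed atom or constant is required.
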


\

\begin{propo}\label{teoremas-hachecu}
\

	\begin{itemize}
		\item[(i)] $\alpha\macube\no\pt x\no\alpha(t/x)$
		\item[(ii)] $\nof\pt x\alpha\macube \no\pt x\no\nof\alpha$
		\item[(iii)] $\no\alpha\imp\beta\macube\, \no\pt x\alpha\imp\beta$,   if $x\not\in\libre\beta$
		\item[(iv)] $\macube\pt x\nof\alpha\imp\nof\existe x\alpha$
		\item[(v)] $\nof\no\beta\macube\nof\no\pt x\beta$
		\item[(vi)] $\macube\nof\pt x\nof\alpha\imp\existe x\alpha$
		\item[(vii)] $\macube\left(\alpha\imp\no\pt x\beta\right)\imp \existe x(\alpha\imp \no\beta)$, 

where $\alpha\in\sent(\mm S)$ and $\libre\beta=\set x$
		\item[(viii)]$(\varphi\imp\no\phi)\imp\psi
		\macube 
		(\varphi\imp\no\pt x\phi)\imp\psi$, 

where $\varphi\in\sent(\mm S)$,  $\libre\phi=\set x$ and $x\not\in\libre\psi$.
		
	\end{itemize}
\end{propo}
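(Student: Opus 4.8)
The eight items of Proposition~\ref{teoremas-hachecu} are all statements about derivability in $\hachecube$, and the plan is to prove each by combining the axioms and rules of Definition~\ref{sistema} with the Deduction Metatheorem (Proposition~\ref{Predmdt} and its corollaries), the derived rule \gen, the propositional theorems of $\hachedos_T$ collected in Proposition~\ref{esq-validos}, and the fact (Theorem~\ref{preserva-instancia}) that any instance of a $\bededos$-valid propositional scheme is provable. First I would dispatch (i): by \axi C with $t$ substituted for $x$ we get $\no\alpha(t/x)\imp\no\pt x\no\no\alpha$ — wait, more directly, apply \axi C to the formula $\no\alpha$ in place of $\varphi$, giving $\no\no\alpha(t/x)\imp\no\pt x\no\alpha$; combine with the propositional theorem $\alpha\imp\no\no\alpha$ (from \textbf{(DNeg)}) and \emepe\ to obtain $\alpha\macube\no\pt x\no\alpha(t/x)$. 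This is essentially the ``$\ex$-introduction'' direction once one unfolds $\existe x\alpha$ as $\no\pt x\no\alpha$.

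For (ii) and (v) the idea is to replace a subformula under a quantifier using a proved equivalence: since $\nof\no\beta\sii$ (something provably equivalent to) $\no\pt x\beta$ involves pushing $\nof$ and $\no$ through $\pt$, I would first establish a ``congruence under $\pt$'' lemma, namely that $\macube\gamma\sii\delta$ (with the relevant variable conditions) implies $\macube\pt x\gamma\sii\pt x\delta$; this follows from \axi A, \gen, and \textbf{($\pt$-In1)}. Then (v) reduces to the propositional equivalence between $\nof\no\beta$ and its expansion, and (ii) to rewriting $\nof\pt x\alpha$ using \axi B together with \textbf{($\copi$-3)} / \textbf{(DNeg)} and the definition of $\nof$. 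For (iii): from $\no\alpha\imp\beta$, using \axi C (which gives $\no\alpha(t/x)\imp\no\pt x\alpha$, hence contrapositively, with $x\notin\libre\beta$, a way to infer $\beta$) together with \textbf{($\pt$-In2)} applied to a suitable reformulation, one derives $\no\pt x\alpha\imp\beta$; the cleanest route is to note $\no\pt x\alpha\imp\no\pt x\no\no\alpha$ is \axi B, and then handle the double negation. Items (iv) and (vi) are the two halves of the interdefinability $\macube\nof\pt x\nof\alpha\sii\existe x\alpha$ restricted to the provable (not strongly-equivalent) level: (vi) should come from (i) applied to $\nof\alpha$ plus the classicality of $\nof$ (it satisfies \textbf{(ecq)}/\textbf{(tnd)}, so $\nof\nof\alpha\sii\alpha$ is provable in $\hachedos_T$), and (iv) is its contrapositive, obtained via \textbf{($\pt$-In1)} from $\nof\alpha\imp\nof\alpha$ and the definitional unfolding of $\existe x\alpha$.

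The last two items, (vii) and (viii), are the genuinely delicate ones and I expect them to be the main obstacle, because they are exactly the ``prenex-style'' manipulations needed later (presumably in the completeness proof for $\hachecube$) and they rely on the sentence-hypothesis $\alpha,\varphi\in\sent(\mm S)$ to license pulling a quantifier out of an implication whose antecedent has no free occurrence of $x$. For (vii): starting from $\alpha\imp\no\pt x\beta$, since $\alpha$ is a sentence and $\libre\beta=\set x$, I would use \axi B to replace $\no\pt x\beta$ by $\no\pt x\no\no\beta$, then apply \textbf{($\pt$-In2)} (whose conclusion has shape $\no\pt x\no(\cdot)\imp\alpha$) in the contrapositive direction, together with \axi C, to manufacture, for a witnessing instance, $\alpha\imp\no\beta(t/x)$, and finally re-quantify via the definition $\existe x(\alpha\imp\no\beta)=\no\pt x\no(\alpha\imp\no\beta)$ and \textbf{($\pt$-In1)}/\textbf{($\pt$-In2)} with the side condition $x\notin\libre\alpha$. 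Item (viii) follows from (vii) by one more application of the Deduction Metatheorem (Corollary~\ref{sendmt}, since $\varphi$ is a sentence) together with transitivity of $\imp$ and the hypothesis $x\notin\libre\psi$. The care needed is entirely in checking, at each use of \textbf{($\pt$-In1)} or \textbf{($\pt$-In2)} inside a deduction under hypotheses, that the quantified variable is not free in the formula the step depends upon — which is precisely where Proposition~\ref{Predmdt}'s proviso and the sentence-hypotheses are invoked — and in keeping track of the extra double negations that \axi B introduces, eliminating them only at points where \textbf{(DNeg)} legitimately applies.
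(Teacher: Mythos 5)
The first thing to note is that the paper does not actually prove Proposition~\ref{teoremas-hachecu}: it only remarks that it ``can be proven as in the case of first-order classical logic and first-order {\bf LFI}s'' and points to \cite[Chapter 7]{CarCon}. Your plan is therefore more explicit than the text, and for several items it is essentially correct: the derivation of (i) via \axi{C} applied to $\no\alpha$ plus {\bf (DNeg)}, the derivation of (iii) via {\bf ($\pt$-In2)} applied to $\no\alpha\imp\beta$ followed by \axi{B} and transitivity, and the congruence-under-$\pt$ lemma for (v) (a legitimate derived rule from \axi{A}, \gen\ and {\bf ($\pt$-In1)}) are exactly the standard adaptations intended.

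There are, however, two genuine gaps. The main one is in (vii), which you rightly flag as the crux: you propose to ``apply {\bf ($\pt$-In2)} in the contrapositive direction \dots to manufacture, for a witnessing instance, $\alpha\imp\no\beta(t/x)$''. Inference rules of a Hilbert calculus cannot be contraposed, and there is no syntactic mechanism for extracting a witness term from $\no\pt x\beta$; as stated, this step fails. The classical proof of $(\alpha\imp\ex x\gamma)\imp\ex x(\alpha\imp\gamma)$ is not a witness extraction but a case split on the antecedent, and here the split is supplied by \pos 9: from $\alpha\vee(\alpha\imp\no\beta)$, the second disjunct yields $\existe x(\alpha\imp\no\beta)$ by item (i); under the first disjunct one detaches $\no\pt x\beta$, passes to $\no\pt x\no\no\beta$ by \axi{B}, and then uses \pos 1 together with (i) and {\bf ($\pt$-In2)} (legitimate because $\alpha$ is a sentence, so $x\not\in\libre{\existe x(\alpha\imp\no\beta)}$) to reach the same conclusion; \pos 8 and the Deduction Metatheorem for sentences finish the argument. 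Without this case analysis, (vii) --- and hence (viii), which you correctly reduce to it --- is not established. The second gap is in (ii): since $\nof\gamma$ abbreviates $\copi\gamma\wedge\no(\gamma\wedge\copi\gamma)$ and the calculus has no axiom governing $\copi\pt x\varphi$, the hypothesis $\nof\pt x\alpha$ cannot simply be ``rewritten'' using \axi{B} and {\bf ($\copi$-3)}; one must genuinely unfold it via {\bf (DM2)} and {\bf ($\copi$-3)}, perform a case analysis on $\no\pt x\alpha$ versus $\no\copi\pt x\alpha$, and only then bring in \axi{B} and the schemes of Proposition~\ref{esq-validos}. As written, that step is asserted rather than proved.
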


\begin{lema}\label{lema-instancias}
Let
$\alpha=A[\beta_1/b_1,\ldots,\beta_n/b_n]\in\for(\forall,{\bb P}_T,\mm S)$ 
be an instance of a formula $A(b_1,\ldots,b_n)$ in the propositional language  $\rr L({\bb P}_T)$. 
 For every $s\in S(\ff A)$ we define the $\bededos$-morphism $e_s$ 
as follows
\begin{center}
	$e_s(b_i)\, :=\, k$ \ iff \ $\valbede{\beta_i}(s)=k$,
	
\end{center}\noindent
for every $k\in\set{1,\bebe,\nene,0}$, then $$\valbede{\alpha}(s)=e_s(A).$$
\end{lema}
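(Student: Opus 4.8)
The plan is to prove Lemma~\ref{lema-instancias} by induction on the complexity of the propositional formula $A(b_1,\ldots,b_n)$ in $\rr L({\bb P}_T)$. The statement is essentially a "substitution lemma": applying a propositional substitution $[\beta_1/b_1,\ldots,\beta_n/b_n]$ at the syntactic level corresponds, under the semantic valuation $\valbede{\cdot}$ in a fixed $\cube$-structure $\ff A$ with a fixed assignment $s$, to evaluating the propositional formula $A$ under the $\bededos$-morphism $e_s$ determined by $e_s(b_i) = \valbede{\beta_i}(s)$. Note that although $e_s$ is a priori only defined on the atoms $b_1,\ldots,b_n$, it extends uniquely to a $\bededos$-morphism on all of $\for({\bb P}_T)$ since $\for({\bb P}_T)$ is the absolutely free algebra on the propositional variables; I would record this at the outset so that $e_s(A)$ makes sense.

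First I would handle the base case: $A$ is a propositional variable. If $A = b_i$ for some $i$, then $\alpha = \beta_i$ and we must check $\valbede{\beta_i}(s) = e_s(b_i)$, which is exactly the defining clause for $e_s$. If $A$ is a propositional variable not among $b_1,\ldots,b_n$, the substitution leaves it untouched — but in the intended application all variables of $A$ are among the $b_i$, so either this case does not arise or one adopts the convention that $e_s$ sends the remaining variables to arbitrary fixed values; I would simply restrict to the relevant case. For the inductive step, suppose $A = A' \# A''$ with $\# \in \{\wedge,\vee,\imp\}$ (or $A = \# A'$ with $\# \in \{\neg,\copi\}$). Then $\alpha = \alpha' \# \alpha''$ where $\alpha' = A'[\vec\beta/\vec b]$ and $\alpha'' = A''[\vec\beta/\vec b]$ are the corresponding instances. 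By Definition~\ref{cube-estruc}(2)--(3), $\valbede{\alpha' \# \alpha''}(s) = \valbede{\alpha'}(s) \mathop{\#} \valbede{\alpha''}(s)$, where $\#$ on the right is the operation of the algebra ${\bf A}_\bededos$ (i.e.\ of {\bf DM4} extended by $\imp,\copi$). By the induction hypothesis this equals $e_s(A') \mathop{\#} e_s(A'')$, and since $e_s$ is a $\bededos$-morphism this is $e_s(A' \# A'') = e_s(A)$. The unary case is identical, using Definition~\ref{cube-estruc}(2).

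The argument is genuinely routine — it is the standard "translation commutes with substitution" pattern — so I do not expect a serious obstacle. The one point requiring a little care is the bookkeeping around $e_s$: making precise that it is well-defined as a morphism (freeness of the formula algebra), and checking that the connectives appearing in Definition~\ref{cube-estruc}(2)--(3) are interpreted by exactly the operations of ${\bf A}_\bededos$ that $e_s$ respects — in particular that the $\cube$-structure uses the same four-element algebra {\bf DM4}-plus-$\{\imp,\copi\}$ at the propositional level, which is built into Definition~\ref{cube-estruc}. There is no issue with quantifiers here because $A$ is a \emph{propositional} formula, so $\valbede{\cdot}$ is only ever invoked through clauses (2) and (3), never (4) or (5); this is why a plain structural induction on $A$ suffices and the assignment $s$ can be held fixed throughout.
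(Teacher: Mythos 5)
Your proof is correct and follows exactly the route the paper takes: the paper's entire proof is ``Using induction on the complexity of $A$,'' and your structural induction on $A$ --- base case via the defining clause of $e_s$, inductive step via Definition~\ref{cube-estruc}(2)--(3) together with the fact that $e_s$ is a $\bededos$-morphism --- is the standard fleshing-out of that one-line argument. Your side remarks (extending $e_s$ by freeness of the formula algebra, and noting that clauses (4)--(5) never arise since $A$ is propositional) are sound and only add precision.
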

\begin{proof} 
Using induction on the complexity of $A$.
\end{proof}

\begin{coro}\label{validez-instancias}
If $\varphi$ is an instance of an axiom of $\hachedost$, then $\mocube\varphi$.				\end{coro}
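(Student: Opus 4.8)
The plan is to reduce the first-order statement to the propositional soundness already established in Theorem~\ref{adequacy}, using the translation lemma (Lemma~\ref{lema-instancias}) as the bridge. Concretely, suppose $\varphi$ is an instance of an axiom of $\hachedost$, say $\varphi = A[\beta_1/b_1,\ldots,\beta_n/b_n]$ where $A(b_1,\ldots,b_n)$ is an axiom schema of $\hachedost$ in the propositional language $\rr L({\bb P}_T)$. We must show $\mocube\varphi$, i.e.\ that every $\cube$-model satisfies $\varphi$; unwinding Definition~\ref{modelos}, this means $\valbede{\varphi}(s)\in\mm D_\bededos$ for every $\cube$-structure $\ff A$ and every assignment $s\in\ese A$.

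First I would fix an arbitrary $\cube$-structure $\ff A$ and an arbitrary $s\in\ese A$, and invoke Lemma~\ref{lema-instancias}: there is a $\bededos$-morphism $e_s$ on the propositional language ${\bb P}_T$, determined by $e_s(b_i) = \valbede{\beta_i}(s)$, such that $\valbede{\varphi}(s) = e_s(A)$. Since $e_s$ is by construction a $\bededos$-valuation (an $\mm M_{\bededos_T}$-valuation, in the twist-style presentation), and $A$ is an instance of an $\hachedost$-axiom, soundness of $\hachedost$ with respect to $\bededos_T$ — that is, Theorem~\ref{adequacy}, Soundness direction — gives $\mobedos_T A$, hence $e_s(A)\in\mm D_{\bededos}=\set{1,\bebe}$. (Equivalently one may use Proposition~\ref{bededos-stand} together with the explicit axioms of $\hachedost$, but quoting Theorem~\ref{adequacy} is cleanest since it covers all axioms at once.) Therefore $\valbede{\varphi}(s)=e_s(A)\in\mm D_\bededos$. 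As $\ff A$ and $s$ were arbitrary, $\ff A\in{\rm mod}_\cube(\varphi)$ for every $\cube$-structure $\ff A$, i.e.\ $\mocube\varphi$.

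There is essentially no obstacle: the entire content has been pushed into Lemma~\ref{lema-instancias} (which transfers the computation of $\valbede{\cdot}(s)$ on an instance to a purely propositional evaluation via $e_s$) and into the already-proven propositional soundness. The only point requiring a line of care is checking that $e_s$ as defined is genuinely a $\bededos$-morphism, i.e.\ that it respects all the connectives of ${\bb P}_T$ — but this is exactly the assertion of Lemma~\ref{lema-instancias}, proven there by induction on the complexity of $A$. So the proof is short: pick $\ff A$ and $s$, apply Lemma~\ref{lema-instancias}, apply soundness of $\hachedost$ (Theorem~\ref{adequacy}) to conclude $e_s(A)\in\set{1,\bebe}$, and conclude.

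\begin{proof}
Let $\varphi=A[\beta_1/b_1,\ldots,\beta_n/b_n]\in\for(\forall,{\bb P}_T,\mm S)$ be an instance of an axiom $A(b_1,\ldots,b_n)$ of $\hachedost$, and let $\ff A$ be an arbitrary $\cube$-structure over $\mm S$ and $s\in\ese A$ an arbitrary assignment. By Lemma~\ref{lema-instancias}, the map $e_s$ on $\rr L({\bb P}_T)$ given by $e_s(b_i):=\valbede{\beta_i}(s)$ is a $\bededos$-morphism and satisfies $\valbede{\varphi}(s)=e_s(A)$. Since $A$ is an instance of an axiom of $\hachedost$, we have $\mabedost A$, so by soundness of $\hachedost$ with respect to the matrix $\bededos_T$ (Theorem~\ref{adequacy}) it follows that $\mobedos_T A$; in particular $e_s(A)\in\mm D_\bededos=\set{1,\bebe}$. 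Hence $\valbede{\varphi}(s)=e_s(A)\in\mm D_\bededos$. As $\ff A$ and $s$ were arbitrary, every $\cube$-structure is a $\cube$-model of $\varphi$, that is, $\mocube\varphi$.
\end{proof}
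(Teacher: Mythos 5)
Your proof is correct and is exactly the argument the paper intends: the corollary is stated without proof immediately after Lemma~\ref{lema-instancias}, and the intended derivation is precisely your reduction — fix $\ff A$ and $s$, use the lemma to get $\valbede{\varphi}(s)=e_s(A)$, and invoke the validity of the $\hachedost$ axioms in $\bededos_T$ (the soundness half of Theorem~\ref{adequacy}) to conclude $e_s(A)\in\set{1,\bebe}$. Nothing is missing.
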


\begin{lema}\label{asig-var-libres}
Let $t(x_1,\ldots,x_n)$ be a term and let $\alpha(x_1,\ldots,x_n)$ be a formula in 
$\rr L(\mm S)$. If $s$ and $s'$ are assignment in $\ff A$ such that  
$s(x_i)=s'(x_i)$ for all $i$ ($1\leqslant i\leqslant n$), then 
\begin{itemize}
\item[(i)] $t^\ff A[s]=t^\ff A[s']$,
\item[(ii)] $\valbede\alpha(s)=\valbede\alpha(s')$,
\end{itemize}
\end{lema}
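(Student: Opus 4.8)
The plan is to prove both parts by a straightforward simultaneous induction on the complexity of the term $t$ and the formula $\alpha$, exploiting the fact that the value of a term (respectively, the denotation of a formula) under an assignment depends only on the values the assignment gives to the (finitely many) variables that actually occur (free) in that term (formula). This is the usual ``coincidence lemma'' adapted to the four-valued partial-structure semantics of $\cube$, so no genuinely new idea is needed; the work is purely bookkeeping.

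First I would prove (i) by induction on the complexity of the term $t$. In the base cases: if $t$ is a variable, it must be one of $x_1,\ldots,x_n$, and then $t^\ff A[s]=s(x_i)=s'(x_i)=t^\ff A[s']$ by hypothesis; if $t$ is a constant $c\in\mm C$, then $t^\ff A[s]=c^\ff A=t^\ff A[s']$, independently of the assignment. For the inductive step, if $t$ is $f(t_1,\ldots,t_k)$ with $f\in\mm F$, then each $t_j$ has its variables among $x_1,\ldots,x_n$, so by the inductive hypothesis $t_j^\ff A[s]=t_j^\ff A[s']$ for each $j$, and hence $t^\ff A[s]=f^\ff A\big(t_1^\ff A[s],\ldots,t_k^\ff A[s]\big)=f^\ff A\big(t_1^\ff A[s'],\ldots,t_k^\ff A[s']\big)=t^\ff A[s']$, using Definition~\ref{asig-valter}.

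Next I would prove (ii) by induction on the complexity of the formula $\alpha$, using part~(i) for the atomic case. If $\alpha$ is $P(t_1,\ldots,t_k)$, then by (i) each $t_j^\ff A[s]=t_j^\ff A[s']$ (the variables of each $t_j$ are among the free variables of $\alpha$), so $\valbede{\alpha}(s)=P^\ff A\big(t_1^\ff A[s],\ldots\big)=P^\ff A\big(t_1^\ff A[s'],\ldots\big)=\valbede{\alpha}(s')$ by Definition~\ref{cube-estruc}(1). The propositional connective cases ($\neg$, $\copi$, $\wedge$, $\vee$, $\imp$) are immediate from Definition~\ref{cube-estruc}(2)--(3) and the inductive hypothesis, since the free variables of an immediate subformula are among those of $\alpha$.

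The one case requiring a little care -- and the main (modest) obstacle -- is the quantifier case, say $\alpha$ is $\pt y\,\beta$ (the case $\ex y\,\beta$ being symmetric via Definition~\ref{cube-estruc}(5)). Here $\libre{\beta}\subseteq\libre{\alpha}\cup\set{y}$, so for every $a\in A$ the two modified assignments $s_y^a$ and $(s')_y^a$ agree on all of $x_1,\ldots,x_n$ and also on $y$; hence they agree on every free variable of $\beta$, and the inductive hypothesis applied to $\beta$ gives $\valbede{\beta}(s_y^a)=\valbede{\beta}((s')_y^a)$ for each $a\in A$. Therefore the two sets $\set{\valbede{\beta}(s_y^a) : a\in A}$ and $\set{\valbede{\beta}((s')_y^a) : a\in A}$ coincide, and taking infima (Definition~\ref{cube-estruc}(4)) yields $\valbede{\pt y\,\beta}(s)=\valbede{\pt y\,\beta}(s')$. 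One should be slightly careful that this argument is insensitive to whether or not $y$ is itself one of the $x_i$: if $y=x_i$ for some $i$, the clause $s_y^a$ simply overrides the value $s(x_i)$ by $a$ on both sides, and the agreement on the remaining variables is what is used. This completes the induction and the proof. $\hfill\qed$
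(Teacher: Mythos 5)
Your proof is correct and is exactly the standard coincidence-lemma induction (on term complexity for (i), then on formula complexity for (ii), with the quantifier case handled via the modified assignments $s_y^a$). The paper does not write out a proof at all --- it simply cites Mostowski's Proposition~2.2 --- so your argument is precisely the routine proof the authors are delegating to that reference.
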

\begin{proof} See \cite[Proposition 2.2]{mosto}. 
\end{proof}

\begin{lema} [Substitution Lemma] \label{lema-axiomas} 
Let $t\in \libre{x,\varphi}$ and let $s$ be an assignment. Then 
$$\valbede{\varphi(t/x) }(s) = \valbede\varphi \left(s_x^{a}\right)$$
where $a=t^\ff A[s]$
\end{lema}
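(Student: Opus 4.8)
The plan is to prove the Substitution Lemma by induction on the complexity of the formula $\varphi$, which is the standard strategy for such results; the only subtleties are the book-keeping with assignments and the fact that the semantics here is lattice-valued (four-valued) rather than two-valued, so that in the quantifier cases one must track infima and suprema rather than truth.

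First I would settle the term-level substitution, which is a prerequisite: for any term $u$ and any $t \in {\sf free}(x,\varphi)$, one has $\big(u(t/x)\big)^{\ff A}[s] = u^{\ff A}[s_x^{a}]$ with $a = t^{\ff A}[s]$, proved by a routine induction on the complexity of $u$ (variable, constant, and $f(u_1,\dots,u_k)$ cases), using Definition~\ref{asig-valter}. Then the atomic case of the main induction is immediate: for $\varphi = P(u_1,\dots,u_k)$ we have $\valbede{\varphi(t/x)}(s) = P^{\ff A}\big((u_1(t/x))^{\ff A}[s],\dots\big) = P^{\ff A}\big(u_1^{\ff A}[s_x^a],\dots\big) = \valbede{\varphi}(s_x^a)$ by Definition~\ref{cube-estruc}(1) and the term-level claim.

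Next I would handle the propositional connectives, which are entirely mechanical: for $\# \in \{\neg,\copi\}$, $\valbede{(\#\psi)(t/x)}(s) = \#\,\valbede{\psi(t/x)}(s) = \#\,\valbede{\psi}(s_x^a) = \valbede{\#\psi}(s_x^a)$ by Definition~\ref{cube-estruc}(2) and the inductive hypothesis; similarly for $\# \in \{\wedge,\vee,\imp\}$ via Definition~\ref{cube-estruc}(3). Here one must remember that substitution commutes with the connectives syntactically, i.e.\ $(\#\psi)(t/x)$ is $\#(\psi(t/x))$, which is immediate from the definition of substitution.

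The main obstacle — and the only place requiring care — is the quantifier case, say $\varphi = \pt y\,\psi$ (the $\ex$ case being analogous, or reducible via Proposition~\ref{def-ex-pt} and Remark~\ref{DM-infty}). One must first recall the standard fact that $t \in {\sf free}(x,\pt y\,\psi)$ forces either $x \notin {\sf free}(\pt y\,\psi)$ (in which case $\varphi(t/x) = \varphi$ and the claim follows from Lemma~\ref{asig-var-libres} since $s$ and $s_x^a$ agree on ${\sf free}(\varphi)$), or else $x \neq y$, $y$ does not occur in $t$, and $t \in {\sf free}(x,\psi)$, so that $(\pt y\,\psi)(t/x) = \pt y\,(\psi(t/x))$. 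In the latter case I would write
\begin{align*}
\valbede{(\pt y\,\psi)(t/x)}(s) &= \inf\set{\valbede{\psi(t/x)}(s_y^{c}) \ : \ c \in A}\\
&= \inf\set{\valbede{\psi}\big((s_y^{c})_x^{\,a_c}\big) \ : \ c \in A},
\end{align*}
where $a_c = t^{\ff A}[s_y^{c}]$. Since $y$ does not occur in $t$, Lemma~\ref{asig-var-libres}(i) gives $a_c = t^{\ff A}[s] = a$ for every $c$, and since $x \neq y$ the assignments $(s_y^{c})_x^{a}$ and $(s_x^{a})_y^{c}$ coincide; hence the last infimum equals $\inf\set{\valbede{\psi}\big((s_x^{a})_y^{c}\big) \ : \ c \in A} = \valbede{\pt y\,\psi}(s_x^{a})$ by Definition~\ref{cube-estruc}(4), as required. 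I expect the bulk of the write-up to be precisely this commutation-of-assignments argument together with the clause analysis of when $t$ is free for $x$.
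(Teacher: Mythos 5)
Your proof is correct and is precisely the standard induction on formula complexity that the paper dismisses as ``routine''; the term-level substitution claim, the commutation of substitution with the connectives, and the case analysis on whether $x$ is free in $\pt y\,\psi$ (together with the identity $(s_y^{c})_x^{a}=(s_x^{a})_y^{c}$ for $x\neq y$ and the constancy of $t^{\ff A}[s_y^c]$ when $y$ does not occur in $t$) are exactly the details being elided. No discrepancy with the paper's approach.
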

\begin{proof} It is routine. 
\end{proof}

\

\begin{lema}\label{validez-axiomas}
Let $t\in \libre{x,\varphi}$, then \\[3mm]
(a) \ \ $\mocube\pt x \varphi\imp\varphi(t/x)$, \\[1mm]
(b) \ \ $\mocube\no\pt x\varphi\imp\no\pt x\no\no\varphi$, \\[1mm]
(c) \ \ $\mocube	\no\varphi(x/t)\imp \no\pt x\varphi$.
\end{lema}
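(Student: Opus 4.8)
The goal is to verify the three schemata (a)--(c) of Lemma~\ref{validez-axiomas} directly against the semantics of $\cube$-structures given in Definition~\ref{cube-estruc}. In each case I fix an arbitrary $\cube$-structure $\ff A$ over $\mm S$ and an assignment $s\in\ese A$, and I must show that $\valbede{\cdot}(s)$ sends the displayed formula into $\mm D_\bededos=\set{1,\bebe}$. The main tools are the Substitution Lemma (Lemma~\ref{lema-axiomas}), the clauses of Definition~\ref{cube-estruc} for $\pt$, $\neg$ and $\imp$, the truth-table for $\imp$ in ${\bf A}_\bededos$, and the infinitary De Morgan identities recorded in Remark~\ref{DM-infty}.

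\textbf{Part (a).} Write $P=\valbede{\pt x\varphi}(s)=\inf\set{\valbede{\varphi}(s_x^a) : a\in A}$ and $a_0=t^\ff A[s]$, so that by Lemma~\ref{lema-axiomas} we have $\valbede{\varphi(t/x)}(s)=\valbede{\varphi}(s_x^{a_0})$; call this value $Q$. Since $P=\inf\set{\ldots}$ and $Q$ is one of the elements of that set, $P\leq Q$ in the lattice ${\bf 4}$. It then suffices to check, by inspecting the $\imp$-table of ${\bf A}_\bededos$, that $x\leq y$ implies $x\imp y\in\set{1,\bebe}$; indeed from the table the only non-designated entries of $\imp$ occur in rows $1$ and $\bebe$ with column $\nene$ or $0$, and in those cases $x\not\leq y$. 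Hence $\valbede{\pt x\varphi\imp\varphi(t/x)}(s)=P\imp Q\in\set{1,\bebe}$ for every $s$, i.e. $\mocube\pt x\varphi\imp\varphi(t/x)$.

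\textbf{Parts (b) and (c).} For (b), set $P=\valbede{\pt x\varphi}(s)$ and $P'=\valbede{\pt x\no\no\varphi}(s)=\inf\set{\neg\neg\valbede{\varphi}(s_x^a):a\in A}$. Since $\neg\neg k=k$ for all $k\in{\bf 4}$ (immediate from the $\neg$-table of {\bf DM4}), in fact $P'=P$, so $\valbede{\no\pt x\varphi\imp\no\pt x\no\no\varphi}(s)=\neg P\imp\neg P'=\neg P\imp\neg P$, and every formula $\gamma\imp\gamma$ is $\cube$-valid (again by the $\imp$-table, $k\imp k\in\set{1,\bebe}$ for all $k$). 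For (c), with $a_0=t^\ff A[s]$ and $Q=\valbede{\varphi}(s_x^{a_0})$ as above, Lemma~\ref{lema-axiomas} gives $\valbede{\no\varphi(t/x)}(s)=\neg Q$, while $\valbede{\no\pt x\varphi}(s)=\neg P$ where $P=\inf\set{\valbede{\varphi}(s_x^a):a\in A}\leq Q$. Applying $\neg$, which is order-reversing on {\bf 4}, yields $\neg Q\leq\neg P$, and then the same ``$x\leq y$ implies $x\imp y$ designated'' observation used in part (a) gives $\valbede{\no\varphi(t/x)\imp\no\pt x\varphi}(s)=\neg Q\imp\neg P\in\set{1,\bebe}$. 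Since $s$ was arbitrary, all three claims follow.

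\textbf{Expected obstacle.} There is no deep obstacle here: the lemma is a routine semantic verification. The only point requiring a little care is the bookkeeping in parts (a) and (c) — matching up the inductive clauses for $\pt$ and $\neg$ with the Substitution Lemma, and isolating the elementary lattice fact that $x\leq y$ forces $x\imp y\in\mm D_\bededos$ in ${\bf A}_\bededos$ (equivalently, that $\imp$ restricted to designated-from-below pairs never lands in $\set{\nene,0}$), together with the order-reversing behaviour of $\neg$ on {\bf 4}. Everything else is direct table inspection, so the proof can simply be stated as ``routine'' after recording these observations.
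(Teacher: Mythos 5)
Your proof is correct and follows essentially the same route as the paper's: a direct semantic verification using the Substitution Lemma (Lemma~\ref{lema-axiomas}), the inductive clauses of Definition~\ref{cube-estruc}, and the lattice order on ${\bf 4}$. The only cosmetic difference is that you reduce each implication to the observation that $x\leq y$ forces $x\imp y\in\set{1,\bebe}$, whereas the paper checks directly that a designated antecedent yields a designated consequent; both amount to the same routine inspection of the $\imp$-table.
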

\begin{proof} \ \\
(a) Suppose that $\valbede{\pt x\varphi}(s) = \inf_{a \in A} \valbede\varphi(s_x^a)\in\set{1,\bebe}$. By definition of the order in {\bf 4}, $\valbede\varphi(s_x^a)\in\set{1,\bebe}$ for every $a$. In particular, $\valbede\varphi(s_x^b) = \valbede{\varphi[t/x]}(s)\in\set{1,\bebe}$ for   $b=t^\ff A[s]$, by Lemma~\ref{lema-axiomas}.\\[1mm]
(b)  It is immediate from the fact that  $\valbede{\varphi}(s)=\valbede{\no\no\varphi}(s)$ for every $s\in \ese A$.\\[1mm]
(c) Suppose that  $\valbede{\no\varphi(x/t)}(s)\in\set{\bebe,1}$. Then, $\valbede{\neg\varphi}(s_x^b)\in\set{\bebe,1}$  for   $b=t^\ff A[s]$, by Lemma~\ref{lema-axiomas}. But then $\valbede\varphi(s_x^b)\in\set{\bebe,0}$ and so $\valbede{\pt x\varphi}(s) = \inf_{a \in A} \valbede\varphi(s_x^a)\in\set{\bebe,0}$, by definition of the order in {\bf 4}. Hence $\valbede{\no\pt x\varphi}(s)\in\set{\bebe,1}$. 
\end{proof}

\begin{lema}\label{validez-reglas} \
	
\begin{itemize}
\item[(1)] $\varphi\imp\psi,\, \varphi \, \mocube \, \psi$
\item[(2)] If $x \notin \libre{\varphi}$, then  
\begin{center}
	(2a) \ $\varphi\imp\psi\mocube\varphi\imp\pt x \psi$
	\quad and \quad 
	(2b) $\psi\imp\varphi\mocube\existe x\psi\imp\varphi$ \
\end{center}
\end{itemize}
\end{lema}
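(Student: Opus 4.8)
The plan is to prove the three soundness statements for the rules of inference of $\hachecube$ by reasoning directly with the semantics of $\cube$-structures, mirroring the style of Lemma~\ref{validez-axiomas}. Throughout, fix a $\cube$-structure $\ff A$ and recall that $\mm D_\bededos=\set{1,\bebe}$ is an up-set in {\bf 4}, a fact used repeatedly. For item (1), the soundness of {\bf (MP)}, suppose $\ff A\in{\rm mod}_\cube(\varphi\imp\psi)\cap{\rm mod}_\cube(\varphi)$. Then for every $s\in\ese A$ we have $\valbede{\varphi}(s)\in\mm D_\bededos$ and $\valbede{\varphi\imp\psi}(s)=\valbede{\varphi}(s)\imp\valbede{\psi}(s)\in\mm D_\bededos$; inspecting the implication table of ${\bf A}_\bededos$ (Table~\ref{alg-bededos}), when the antecedent is in $\set{1,\bebe}$ the value of $x\imp y$ lies in $\mm D_\bededos$ exactly when $y\in\mm D_\bededos$. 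Hence $\valbede{\psi}(s)\in\mm D_\bededos$ for all $s$, so $\ff A\in{\rm mod}_\cube(\psi)$.

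For item (2a), the soundness of {\bf ($\pt$-In1)}, assume $\ff A\in{\rm mod}_\cube(\varphi\imp\psi)$ and $x\notin\libre{\varphi}$. Fix $s\in\ese A$; I must show $\valbede{\varphi\imp\pt x\psi}(s)\in\mm D_\bededos$. If $\valbede{\varphi}(s)\notin\mm D_\bededos$, a glance at the implication table shows $x\imp y\in\mm D_\bededos$ whenever $x\notin\mm D_\bededos$ (the rows for $\nene$ and $0$ give values in $\set{1,\bebe}$), so we are done. If $\valbede{\varphi}(s)\in\mm D_\bededos$, then for every $a\in A$ the assignment $s_x^a$ still satisfies $\valbede{\varphi}(s_x^a)=\valbede{\varphi}(s)\in\mm D_\bededos$ by Lemma~\ref{asig-var-libres}(ii), since $s$ and $s_x^a$ agree on $\libre{\varphi}$. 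By hypothesis $\valbede{\varphi\imp\psi}(s_x^a)\in\mm D_\bededos$, and by the {\bf (MP)} analysis above this forces $\valbede{\psi}(s_x^a)\in\mm D_\bededos$ for all $a\in A$; hence $\valbede{\pt x\psi}(s)=\inf_{a\in A}\valbede{\psi}(s_x^a)\in\mm D_\bededos$ because $\mm D_\bededos$ is closed under existing infima in {\bf 4}. Finally $\valbede{\varphi\imp\pt x\psi}(s)=\valbede{\varphi}(s)\imp\valbede{\pt x\psi}(s)\in\mm D_\bededos$ since both components are designated.

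For item (2b), the soundness of {\bf ($\pt$-In2)}, assume $\ff A\in{\rm mod}_\cube(\psi\imp\varphi)$ and $x\notin\libre{\varphi}$; I must show $\ff A\in{\rm mod}_\cube(\existe x\psi\imp\varphi)$, where $\existe x\psi$ abbreviates $\no\pt x\no\psi$ (recall Proposition~\ref{def-ex-pt}). Fix $s$. If $\valbede{\existe x\psi}(s)\notin\mm D_\bededos$ the implication is designated as before, so assume $\valbede{\existe x\psi}(s)=\valbede{\ex x\psi}(s)=\sup_{a\in A}\valbede{\psi}(s_x^a)\in\mm D_\bededos$. By Theorem~\ref{martillo-sem} (equivalently, by Remark~\ref{DM-infty} together with the order structure of {\bf 4}), there is $b\in A$ with $\valbede{\psi}(s_x^b)\in\mm D_\bededos$; here I use that the supremum of a finite subset of {\bf 4} lies in the up-set $\set{1,\bebe}$ only if one of the members already does, which is the key combinatorial observation and the one place where the specific lattice {\bf 4} matters. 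Then by hypothesis $\valbede{\psi\imp\varphi}(s_x^b)\in\mm D_\bededos$, so $\valbede{\varphi}(s_x^b)\in\mm D_\bededos$ by {\bf (MP)}; but $\valbede{\varphi}(s_x^b)=\valbede{\varphi}(s)$ by Lemma~\ref{asig-var-libres}(ii) since $x\notin\libre{\varphi}$, whence $\valbede{\varphi}(s)\in\mm D_\bededos$ and $\valbede{\existe x\psi\imp\varphi}(s)\in\mm D_\bededos$. The main obstacle is precisely the step extracting a witness from a designated supremum: unlike in classical two-valued semantics this needs the explicit fact, recorded in Theorem~\ref{martillo-sem}, that designatedness of $\sup$ over the Belnap lattice propagates to some argument; with that in hand, everything else reduces to routine table-checking of $\imp$ and invocations of the free-variable coincidence lemma.
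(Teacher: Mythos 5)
Your proof is correct and follows essentially the same route as the paper's: item (1) by inspection of the implication table, (2a) via Lemma~\ref{asig-var-libres} and closure of $\set{1,\bebe}$ under infima, and (2b) via witness extraction from a designated supremum (Theorem~\ref{martillo-sem} and Proposition~\ref{def-ex-pt}) followed by Lemma~\ref{asig-var-libres}. If anything, you are slightly more careful than the paper in (2a), since you explicitly dispose of the case where the antecedent is undesignated rather than tacitly restricting to the designated case.
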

\begin{proof} 
{\em (1)} Analogous to the proof done for the propositional case. \\[1mm]
{\em (2a)} \ Suppose that $\ff A\in{\sf mod}_\cube(\varphi\imp\psi)$. Let us pick $s\in \ese A$, then $\valbede{\varphi}(s)\in\set{1,\bebe}$. Since $x\notin\libre{\varphi}$, for all $a\in A$, we have $\valbede\varphi(s_x^a)\in\set{1,\bebe}$ (by  Lemma \ref{asig-var-libres}). Therefore, for all $a\in A$ we have that $\valbede\psi(s_x^a)\in\set{1,\bebe}$. Hence, $\valbede{\pt x\psi}(s)\in\set{1,\bebe}$ and then $\ff A\in{\sf mod}_\cube(\varphi\imp\pt x\psi)$.\\[1mm]
{\em (2b)} \ Suppose that $\ff A\in{\sf mod}_\cube(\psi\imp\varphi)$. Let $s\in A^{\mm V}$ an arbitrary assignment such that $\valbede{\existe x\psi}(s)\in\set{1,\bebe}$. Then, by Theorem~\ref{martillo-sem} and Proposition~\ref{def-ex-pt}, there is some $a\in A$ it holds  $\valbede{\psi}(s_x^a)\in\set{1,\bebe}$. Then, $\valbede\varphi(s_x^a)\in\set{1,\bebe}$. By Lemma \ref{asig-var-libres}, since $x\notin \libre{\varphi}$ we have $\valbede\varphi(s)\in\set{1,\bebe}$. Then,  $\ff A\in{\sf mod}_\cube(\existe x\psi\imp\varphi)$.
\end{proof}

\

\begin{teo} [Soundness] \label{sound-qbd2}
If  $\Gamma\macube\varphi$ \ then  \ $\Gamma\mocube\varphi$.
\end{teo}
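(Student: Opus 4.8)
The statement is the soundness half of the adequacy theorem for $\hachecube$, so the plan is the routine induction on the length of a derivation of $\varphi$ from $\Gamma$ in $\hachecube$, where at each step we verify that the premises being $\cube$-satisfied in a $\cube$-structure $\ff A$ forces $\varphi$ to be $\cube$-satisfied in $\ff A$. First I would fix a $\cube$-structure $\ff A$ with $\ff A\in{\sf mod}_\cube(\Gamma)$ and argue, by induction on $n$, that every formula $\psi_n$ in a deduction $\psi_1,\dots,\psi_n$ of $\varphi$ from $\Gamma$ satisfies $\ff A\in{\sf mod}_\cube(\psi_n)$; the conclusion then follows by Definition~\ref{martilloprimer}. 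The base/axiom cases split into two groups. The propositional axiom instances (schemata \pos 1--\pos 9, \textbf{(DNeg)}, \textbf{(DM1)}--\textbf{(DM3)}, \textbf{($\copi$-1)}--\textbf{($\copi$-3)}) are handled by Corollary~\ref{validez-instancias}, which already tells us $\mocube\varphi$ for any instance of an axiom of $\hachedost$. The quantifier axioms \axi A, \axi B, \axi C are exactly items (a), (b), (c) of Lemma~\ref{validez-axiomas}, so those cases are discharged immediately.

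For the inductive (rule) cases, there are three rules to treat. Modus ponens is covered by Lemma~\ref{validez-reglas}(1): if $\ff A$ is a model of $\alpha\imp\beta$ and of $\alpha$, it is a model of $\beta$. The two generalization-style rules \textbf{($\pt$-In1)} and \textbf{($\pt$-In2)} are covered respectively by Lemma~\ref{validez-reglas}(2a) and (2b): given the side condition $x\notin\libre\alpha$, a model of $\alpha\imp\beta$ is a model of $\alpha\imp\pt x\beta$, and a model of $\beta\imp\alpha$ is a model of $\existe x\beta\imp\alpha$ --- noting that $\existe x\beta$ is by definition $\no\pt x\no\beta$ (Proposition~\ref{def-ex-pt}), which is precisely the form $\no\pt x\no\beta\imp\alpha$ appearing in \textbf{($\pt$-In2)}. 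Thus each rule application preserves membership in ${\sf mod}_\cube$, completing the induction. Since $\Gamma\subseteq{\sf mod}_\cube^{-1}$-preimage data is built in from the start (we chose $\ff A\in{\sf mod}_\cube(\Gamma)$), the hypothesis $\psi_i\in\Gamma$ case is trivial.

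Strictly speaking there is essentially no obstacle here: all the hard work has already been isolated into the preceding lemmas (Corollary~\ref{validez-instancias} and Lemmas~\ref{validez-axiomas} and~\ref{validez-reglas}), so the proof is a bookkeeping argument assembling them. The one point that requires a moment's care --- and which I would state explicitly --- is the rule \textbf{($\pt$-In2)} and its interaction with the abbreviation $\existe x(\cdot):=\no\pt x\no(\cdot)$: one must check that the syntactic object $\no\pt x\no\beta\imp\alpha$ produced by the rule is literally $\existe x\beta\imp\alpha$, so that Lemma~\ref{validez-reglas}(2b) applies verbatim. Beyond that, the argument is the standard soundness induction and I would present it compactly.

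\begin{proof}
Fix a $\cube$-structure $\ff A$ with $\ff A\in{\rm mod}_\cube(\Gamma)$, and let $\psi_1,\dots,\psi_n$ be a deduction of $\varphi=\psi_n$ from $\Gamma$ in $\hachecube$. We show by induction on $i$ that $\ff A\in{\rm mod}_\cube(\psi_i)$ for every $i$; the case $i=n$ gives the result.

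If $\psi_i\in\Gamma$ then $\ff A\in{\rm mod}_\cube(\psi_i)$ by assumption. If $\psi_i$ is an instance of one of the axiom schemata \pos 1--\pos 9, \textbf{(DNeg)}, \textbf{(DM1)}--\textbf{(DM3)}, \textbf{($\copi$-1)}--\textbf{($\copi$-3)} of $\hachedost$, then $\mocube\psi_i$ by Corollary~\ref{validez-instancias}, so in particular $\ff A\in{\rm mod}_\cube(\psi_i)$. If $\psi_i$ is an instance of \axi A, \axi B or \axi C, then $\mocube\psi_i$ by Lemma~\ref{validez-axiomas}(a), (b) or (c), respectively.

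Now suppose $\psi_i$ is obtained by a rule from earlier formulas of the deduction, which by the inductive hypothesis are satisfied by $\ff A$. If $\psi_i$ follows by \emepe\ from $\psi_j$ and $\psi_k=\psi_j\imp\psi_i$ with $j,k<i$, then $\ff A\in{\rm mod}_\cube(\psi_i)$ by Lemma~\ref{validez-reglas}(1). If $\psi_i$ is $\alpha\imp\pt x\beta$ obtained by \textbf{($\pt$-In1)}\ from $\psi_j=\alpha\imp\beta$ with $x\notin\libre\alpha$, then $\ff A\in{\rm mod}_\cube(\psi_i)$ by Lemma~\ref{validez-reglas}(2a). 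Finally, if $\psi_i$ is $\no\pt x\no\beta\imp\alpha$ obtained by \textbf{($\pt$-In2)}\ from $\psi_j=\beta\imp\alpha$ with $x\notin\libre\alpha$, then, since $\existe x\beta$ abbreviates $\no\pt x\no\beta$ (Proposition~\ref{def-ex-pt}), $\psi_i$ is $\existe x\beta\imp\alpha$, and $\ff A\in{\rm mod}_\cube(\psi_i)$ by Lemma~\ref{validez-reglas}(2b). This completes the induction.

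Therefore $\ff A\in{\rm mod}_\cube(\varphi)$ for every $\cube$-structure $\ff A\in{\rm mod}_\cube(\Gamma)$, that is, ${\rm mod}_\cube(\Gamma)\subseteq{\rm mod}_\cube(\varphi)$, which is $\Gamma\mocube\varphi$.
\end{proof}
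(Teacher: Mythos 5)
Your proof is correct and follows exactly the route the paper takes: induction on the length of the derivation, discharging the propositional axioms via Corollary~\ref{validez-instancias}, the quantifier axioms via Lemma~\ref{validez-axiomas}, and the three rules via Lemma~\ref{validez-reglas}. The paper's own proof is a one-line appeal to the same lemmas; your version merely writes out the bookkeeping, including the correct observation that the rules preserve truth in a fixed structure $\ff A$ (global validity), which is what makes the generalization-style rules sound here.
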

\begin{proof} 
By induction on the length of the derivation of  $\varphi$ from $\Gamma$ in $\hachecube$, using Lemmas \ref{validez-axiomas} and \ref{validez-reglas}.
\end{proof}

\section{Scapegoat sets of formulas and completeness}

In this section, as it is usual in the context of first-order logics, we call {\em theory} to any set of closed formulas.

\begin{lema}\label{lema-sat-hq}
	Let $\varphi\in\sent(\mm S)$ and let  $\Delta\subseteq \for(\mm S)$ be a $\varphi$-saturated in $\hachecube$, then 
	\begin{center}
		either \ {\rm $\no\varphi\wedge\copi\varphi\in\Delta$}, \ 
		or \ {\rm $\no\copi\varphi\wedge\copi\varphi\in\Delta$}
	\end{center}
\end{lema}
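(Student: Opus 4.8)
The plan is to reduce the claim to a single propositional tautology together with the standard ``disjunction'' behaviour of $\varphi$-saturated theories. First I would verify that the propositional schema $(\no p\wedge\copi p)\vee(\no\copi p\wedge\copi p)\vee p$ is $\bededos$-valid, by a routine inspection of the truth tables for $\neg$, $\copi$, $\wedge$, $\vee$ of $\bededos$: if $v(p)\in\set{1,\bebe}$ then $v(p)$ itself, being a disjunct, is designated and so is the whole formula; if $v(p)=\nene$ then $\copi p$ and $\no\copi p$ both take the value $\bebe$, so the disjunct $\no\copi p\wedge\copi p$ takes the value $\bebe$; and if $v(p)=0$ then $\no p$ and $\copi p$ both take the value $1$, so the disjunct $\no p\wedge\copi p$ takes the value $1$. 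Hence, by Theorem~\ref{preserva-instancia}, the instance $(\no\varphi\wedge\copi\varphi)\vee\big((\no\copi\varphi\wedge\copi\varphi)\vee\varphi\big)$ is a theorem of $\hachecube$; and since every $\varphi$-saturated theory is closed, this formula belongs to $\Delta$.

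Next I would isolate the auxiliary fact that, for sentences $\alpha$ and $\beta$, if $\alpha\vee\beta\in\Delta$ then $\alpha\in\Delta$ or $\beta\in\Delta$. Assuming neither holds, $\varphi$-saturation gives $\Delta,\alpha\macube\varphi$ and $\Delta,\beta\macube\varphi$; since $\alpha$ and $\beta$ are sentences, the Deduction Metatheorem for sentences (Corollary~\ref{sendmt}) yields $\Delta\macube\alpha\imp\varphi$ and $\Delta\macube\beta\imp\varphi$, whence axiom \pos 8 with two applications of \emepe\ gives $\Delta\macube(\alpha\vee\beta)\imp\varphi$, and since $\alpha\vee\beta\in\Delta$ a final application of \emepe\ produces $\Delta\macube\varphi$, contradicting $\varphi$-saturation. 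The one point needing care is exactly this passage through the Deduction Metatheorem: the general metatheorem for $\hachecube$ carries a restriction on quantified variables (Proposition~\ref{Predmdt}), so one must keep the discharged formulas closed — which is the case here, as $\varphi$ is a sentence — in order to invoke Corollary~\ref{sendmt} without side conditions. I expect this to be the only delicate step.

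Finally, I would apply the auxiliary fact twice to the formula $(\no\varphi\wedge\copi\varphi)\vee\big((\no\copi\varphi\wedge\copi\varphi)\vee\varphi\big)\in\Delta$, all of whose relevant subformulas are sentences because $\varphi$ is. Either $\no\varphi\wedge\copi\varphi\in\Delta$, and we are done; or $(\no\copi\varphi\wedge\copi\varphi)\vee\varphi\in\Delta$, and then either $\no\copi\varphi\wedge\copi\varphi\in\Delta$, and we are done, or $\varphi\in\Delta$ — but this last possibility is ruled out, since a $\varphi$-saturated theory does not prove $\varphi$ and, being closed, cannot contain it. This exhausts the cases and gives the stated dichotomy.
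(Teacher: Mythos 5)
Your proof is correct and reaches the conclusion by a genuinely different decomposition from the paper's. The paper first forces $\copi\varphi\in\Delta$ by noting that otherwise saturation gives $\Delta,\copi\varphi\macube\varphi$, whence the schema $(\copi\alpha\imp\alpha)\imp\alpha$ (Proposition~\ref{esq-validos}(i)) together with Corollary~\ref{sendmt} and \emepe\ yields $\Delta\macube\varphi$; it then rules out the joint absence of $\no\varphi$ and $\no\copi\varphi$ the same way, using \pos 5 and the schema $\big((\no\alpha\imp\alpha)\wedge(\no\copi\alpha\imp\alpha)\big)\imp\alpha$ of Proposition~\ref{esq-validos}(ii). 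You instead package the whole content into the single $\bededos$-valid schema $(\no p\wedge\copi p)\vee(\no\copi p\wedge\copi p)\vee p$, import it into $\hachecube$ via Theorem~\ref{preserva-instancia} and closure of $\Delta$, and then use the primeness of $\varphi$-saturated sets with respect to $\vee$, derived from \pos 8 and Corollary~\ref{sendmt}. The two routes draw on the same resources (propositional completeness to certify tautologies, the deduction metatheorem for sentences, closure of saturated theories), and you are right that the only delicate point is keeping the discharged formulas closed so that Corollary~\ref{sendmt} applies without side conditions; your version is arguably more transparent in isolating the one semantic fact driving the lemma, while the paper's avoids stating the disjunction property explicitly. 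One small omission: the paper reads the ``either/or'' exclusively and ends by checking that the two alternatives cannot both hold --- if they did, then $\no\varphi$, $\copi\varphi$ and $\no\copi\varphi$ would all lie in $\Delta$, and Proposition~\ref{esq-validos} items (iii), (vi) and (vii) would make $\Delta$ trivial, contradicting $\Delta\not\macube\varphi$. Your argument establishes only the inclusive disjunction; appending that one-line exclusivity check would make it match the stated lemma in full.
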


\begin{proof}
	Necessarily, it holds that $\copi\varphi\in\Delta$. Otherwise, since $\Delta$ is a $\varphi$-saturated theory, we have 
	$$
	\Delta,\, \copi\varphi \macube \varphi
	$$
	and, in virtue of Proposition~\ref{esq-validos}(i), (DMT) (Corollary~\ref{sendmt}) and MP we conclude
	$$
	\Delta\macube \varphi
	$$
	which is a contradiction since $\Delta$ is $\varphi$-saturated.
	
	On the other hand, suppose that $\no\varphi\not\in\Delta$ \ and \ $\no\copi\varphi\not\in\Delta$. 
	Since $\Delta$ is $\varphi$-saturated we have  
	\begin{center}
		$\Delta,\, \no\varphi\macube \varphi$ \ and \ 
		$\Delta,\, \no\copi\varphi\macube \varphi$  
	\end{center}
	Using Corollary~\ref{sendmt} once again and axiom \pos 5, we obtain 
	$$
	\Delta\macube (\no\varphi\imp\varphi)\wedge(\no\copi\varphi\imp\varphi)
	$$
	and by Proposition~\ref{esq-validos}(ii), it follows that  $\Delta\macube\varphi$, 
	which is a contradiction.
	
	Finally, both conditions cannot hold simultaneously since, by Proposition \ref{esq-validos}  items~(iii), (vi) and~(vii), $\Delta$ would be a trivial theory, which is a contradiction.
\end{proof}

\begin{defi}
A theory $\Gamma$ is said to be {\em maximal} in $\hachecube$ if it is $\bot$-saturated for some formula $\bot$ such that $\bot \macube \beta$ for every $\beta$.
\end{defi}

So, $\Gamma$ is maximal iff $\Gamma$ is not trivial, but $\Gamma \cup \{\beta\}$ is trivial for any $\beta \not\in \Gamma$.

\begin{propo} \label{satu=max}
Let $\Gamma$ be a $\varphi$-saturated set in $\hachecube$. Then, it is maximal in $\hachecube$.
\end{propo}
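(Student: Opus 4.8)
The plan is to verify the characterization of maximality recorded right after the definition: it suffices to show that $\Gamma$ is not trivial and that $\Gamma\cup\{\beta\}$ is trivial for every $\beta\notin\Gamma$. (That this is equivalent to $\bot$-saturation for some trivializing $\bot$ uses only that a trivializing sentence exists, which holds since $\nof$ is a classical, hence explosive, negation in $\hachedost$ and $\hachecube$ extends $\hachedost$: e.g.\ $\psi\wedge\nof\psi$ trivializes.) Non-triviality is immediate: as $\Gamma$ is $\varphi$-saturated, $\Gamma\not{\macube}\varphi$, so $\Gamma$ does not derive every formula. The content of the proof lies in the second condition.

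Fix $\beta\notin\Gamma$. By $\varphi$-saturation, $\Gamma,\beta\macube\varphi$. The key step is to apply Lemma~\ref{lema-sat-hq} to $\Gamma$ itself, obtaining one of the two cases $\no\varphi\wedge\copi\varphi\in\Gamma$ or $\no\copi\varphi\wedge\copi\varphi\in\Gamma$. In the first case, conjunction elimination gives $\Gamma\macube\no\varphi$ and $\Gamma\macube\copi\varphi$, so $\Gamma,\beta\macube\copi\varphi\wedge(\varphi\wedge\no\varphi)$; taking the instance of {\bf ($\copi$-1)} with $\alpha:=\varphi$ and with an arbitrary formula $\gamma$ as consequent, and applying {\bf (MP)}, we get $\Gamma,\beta\macube\gamma$. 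In the second case, conjunction elimination gives $\Gamma\macube\copi\varphi$ and $\Gamma\macube\no\copi\varphi$, so $\Gamma,\beta\macube\varphi\wedge\copi\varphi\wedge\no\copi\varphi$, and Proposition~\ref{esq-validos}(iii) with $\alpha:=\varphi$ together with {\bf (MP)} gives $\Gamma,\beta\macube\gamma$ for every $\gamma$. In either case $\Gamma\cup\{\beta\}$ is trivial, which is what we needed, so $\Gamma$ is maximal.

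The main difficulty is conceptual rather than computational: the formula $\varphi$ witnessing saturation of $\Gamma$ is in general not a trivializing formula, so one cannot merely restate saturation as maximality. The bridge is Lemma~\ref{lema-sat-hq}, which shows that a $\varphi$-saturated $\Gamma$ already contains an explosive ``kernel'' --- $\no\varphi\wedge\copi\varphi$ or $\no\copi\varphi\wedge\copi\varphi$ --- so that adjoining any $\beta\notin\Gamma$ (which forces $\varphi$ by saturation) immediately triggers axiom {\bf ($\copi$-1)} or Proposition~\ref{esq-validos}(iii). Everything else is routine bookkeeping in positive classical logic ($\wedge$-introduction and elimination, plus {\bf (MP)}), and no quantifier axiom or rule of $\hachecube$ is invoked, so nothing genuinely first-order arises.
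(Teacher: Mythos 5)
Your proof is correct, and it rests on the same essential ingredient as the paper's: Lemma~\ref{lema-sat-hq} supplies the explosive kernel $\no\varphi\wedge\copi\varphi$ or $\no\copi\varphi\wedge\copi\varphi$ inside $\Gamma$, which together with axiom {\bf ($\copi$-1)} or Proposition~\ref{esq-validos}(iii) trivializes $\Gamma\cup\{\beta\}$ once saturation forces $\Gamma,\beta\macube\varphi$. The architecture, however, is genuinely different. The paper first invokes Theorem~\ref{L-A-lemma} to extend $\Gamma$ to a $\bot$-saturated set $\Delta$, shows $\varphi\notin\Delta$ (via the kernel and the bottom formulas $\varphi\wedge\no\varphi\wedge\copi\varphi$, $\varphi\wedge\no\copi\varphi\wedge\copi\varphi$), and then shows that any $\beta\in\Delta\setminus\Gamma$ would force $\varphi\in\Delta$, concluding $\Delta=\Gamma$. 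You instead verify directly that $\Gamma$ itself meets the characterization of maximality recorded after the definition (non-trivial, and trivialized by adjoining any $\beta\notin\Gamma$); this is shorter and avoids Lindenbaum--\L os entirely, at the price of having to justify that characterization against the official definition --- which you do, and for which the cleanest trivializing formula is $\copi\psi\wedge\psi\wedge\no\psi$ straight from {\bf ($\copi$-1)}, with no need to route through the classicality of $\nof$. One shared caveat, not a defect of your argument relative to the paper's: both proofs apply Lemma~\ref{lema-sat-hq}, which is stated only for $\varphi\in\sent(\mm S)$, so both tacitly assume the saturating formula is a sentence.
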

\begin{proof}
Observe that $\Gamma \not\macube \bot$: otherwise $\Gamma \macube \varphi$, a contradiction. By Theorem~\ref{L-A-lemma}, there exists a $\bot$-saturated set $\Delta$ (that is, a maximal theory) which contains $\Gamma$. Observe that $\varphi \not\in \Delta$: otherwise, since either $\no\varphi\wedge\copi\varphi\in\Gamma$ or  $\no\copi\varphi\wedge\copi\varphi\in\Gamma$, by Lemma~\ref{lema-sat-hq}, it would follow that either  $\bot_\varphi:=\varphi \land \no\varphi\wedge\copi\varphi\in\Delta$ or  $\bot'_\varphi:=\varphi \land\no\copi\varphi\wedge\copi\varphi\in\Delta$. But both formulas $\bot_\varphi$ and $\bot'_\varphi$ are bottom in  $\hachecube$ by axiom {\bf ($\copi$-1)} and Proposition~\ref{esq-validos}(iii), and so $\Delta$ would be trivial, a contradiction. Hence, $\varphi \notin \Delta$.

Suppose now that there exists $\beta \in \Delta$ such that $\beta \not\in \Gamma$. Then, $\Gamma,\beta \macube \varphi$. But then, $\Delta \macube\varphi$ and so $\varphi \in \Delta$, a contradiction. This shows that $\Delta=\Gamma$ and so $\Gamma$ is maximal in $\hachecube$.
\end{proof}

Since maximal theories are deductively closed, we can prove without difficulty the next result.
\begin{lema}\label{maximas-lema}
	If $\Delta \subseteq \for(\mm S)$ is  maximal in $\hachecube$, then
	\begin{enumerate}[(a)]
		\item If $\set{\alpha\imp\beta,\, \alpha}\subseteq\Delta$, then $\beta\in\Delta$.
		\item $\alpha\wedge\beta\in\Delta$ \ iff \ $\set{\alpha,\beta}\subseteq\Delta$.
                     \item $\no(\alpha\wedge\beta)\in \Delta$ \ iff \ $\no\alpha\in\Delta$ \ or \ $\no\beta\in\Delta$.
		\item If $\pt x\alpha \in \Delta$, then $\alpha[t/x]\in\Delta$ for every $t\in{\sf clo}(\mm S)$.
                     \item If $\no\alpha[t/x]\in\Delta$ for some $t\in{\sf clo}(\mm S)$, then $\no\pt x\alpha\in\Delta$.
	\end{enumerate}
\end{lema}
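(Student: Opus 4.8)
\textbf{Plan of proof for Lemma~\ref{maximas-lema}.}
The strategy is uniform: each item is proven by combining the fact that a maximal theory $\Delta$ is deductively closed (so $\gamma \in \Delta$ iff $\Delta \macube \gamma$) with an appropriate provable schema of $\hachedost$ or $\hachecube$ together with modus ponens. The nontrivial ``packaging'' steps are Proposition~\ref{satu=max} (a $\varphi$-saturated set is maximal), Corollary~\ref{sendmt} (DMT for sentences), and the catalogue of theorems in Proposition~\ref{esq-validos} and Proposition~\ref{teoremas-hachecu}; I would cite these freely.

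First I would record the deductive closure of $\Delta$: since $\Delta$ is $\bot$-saturated for some bottom formula $\bot$, it is in particular closed, i.e.\ $\alpha \in \Delta$ iff $\Delta \macube \alpha$. For~(a), if $\{\alpha\imp\beta,\alpha\}\subseteq\Delta$ then $\Delta\macube\beta$ by MP, hence $\beta\in\Delta$. For~(b), the forward direction uses the instances of \pos 3 and \pos 4 ($\alpha\wedge\beta\imp\alpha$ and $\alpha\wedge\beta\imp\beta$) with~(a); the converse uses \pos 5 ($\alpha\imp(\beta\imp(\alpha\wedge\beta))$) twice with~(a). For~(c), the left-to-right direction is the delicate one: if $\no\alpha\notin\Delta$ and $\no\beta\notin\Delta$, then by maximality both $\Delta,\no\alpha\macube\bot$ and $\Delta,\no\beta\macube\bot$; by Corollary~\ref{sendmt} (noting $\no\alpha$, $\no\beta$ are sentences, which holds because in this section all formulas of $\Gamma$ are closed and $\alpha,\beta$ are subformulas of a sentence) we get $\Delta\macube\no\alpha\imp\bot$ and $\Delta\macube\no\beta\imp\bot$, whence $\Delta\macube(\no\alpha\vee\no\beta)\imp\bot$ via \pos 8; since axiom {\bf (DM2)} gives $\macube \no(\alpha\wedge\beta)\sii(\no\alpha\vee\no\beta)$, from $\no(\alpha\wedge\beta)\in\Delta$ we would derive $\Delta\macube\bot$, a contradiction. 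The converse of~(c) is immediate from {\bf (DM2)} and Proposition~\ref{esq-validos}(x) ($\no\alpha\imp\no(\alpha\wedge\beta)$), plus the symmetric version, using~(a).

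For the quantifier clauses, (d) is a direct application of axiom {\bf (Ax A)} ($\pt x\varphi\imp\varphi[x/t]$, valid since any closed term $t$ is free for $x$) together with~(a): from $\pt x\alpha\in\Delta$ and {\bf (Ax A)} we get $\alpha[t/x]\in\Delta$ for every $t\in{\sf clo}(\mm S)$. Item~(e) is the dual, using axiom {\bf (Ax C)} ($\no\varphi[x/t]\imp\no\pt x\varphi$) with~(a): from $\no\alpha[t/x]\in\Delta$ we obtain $\no\pt x\alpha\in\Delta$. I would be careful to state the side condition $t\in\libre{x,\alpha}$ is automatically met for closed $t$, so the instances of {\bf (Ax A)} and {\bf (Ax C)} are legitimate.

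The main obstacle is the $\Rightarrow$ direction of~(c): unlike the classical case, $\vee$ is a defined connective here and one must make sure that the disjunction property for maximal theories ($\no\alpha\vee\no\beta \in \Delta$ implies $\no\alpha\in\Delta$ or $\no\beta\in\Delta$) is available --- this is exactly where I invoke Proposition~\ref{satu=max} to identify ``maximal'' with ``$\varphi$-saturated'', so that Definition~\ref{op-nega}(b) (the characterizing property of a disjunction) applies, or alternatively argue directly by the DMT-plus-\pos 8 route sketched above. A secondary subtlety is ensuring all formulas to which Corollary~\ref{sendmt} is applied are sentences; since in this section theories consist of closed formulas and the relevant $\alpha,\beta$ arise as subformulas of sentences in $\Delta$, this is fine, but it should be mentioned explicitly.
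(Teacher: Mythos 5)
Your proposal is correct and matches the paper's intent exactly: the paper omits the proof, remarking only that it follows ``without difficulty'' from the deductive closure of maximal theories, and your argument is precisely that routine filling-in (closure plus \pos{3}--\pos{8}, {\bf (DM2)}, {\bf (Ax A)}, {\bf (Ax C)} and modus ponens). Your one worry about applying the deduction metatheorem only to sentences can be dispatched more simply via Corollary~\ref{corodmt}, since the derivations you use for item~(c) involve no application of {\bf ($\pt$-In1)} or {\bf ($\pt$-In2)} to the added hypothesis at all.
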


The next result will be needed for defining canonical models.

\begin{teo}\label{well-def-hq} Let $\Delta\subseteq\for(\mm S)$ be maximal in \hachecube. Then, for all  $\varphi\in\sent(\mm S)$ it holds one, and only one, of the following conditions:
		\begin{align*}
			(1)&\ \varphi\wedge\copi\varphi\in\Delta&
			(2)&\ \varphi\wedge\no\varphi\in\Delta
			\\
			(3)&\ \copi\varphi\wedge\no\copi\varphi\in\Delta&
			(4)&\ \no\varphi\wedge\copi\varphi\in\Delta
			\\
		\end{align*}
\end{teo}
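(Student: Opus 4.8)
The plan is to exploit the fact that $\bededos_T$ is a four‑valued logic, so every sentence $\varphi$ gets a denotation in $\{1,\bebe,\nene,0\}$ under any assignment in a maximal theory's canonical structure; the four cases $(1)$–$(4)$ are exactly the four conjunctions that are "true" (i.e. designated) precisely at one of the four truth values. First I would verify the purely semantic claim in $\bededos_T$: for each truth value $k$, exactly one of the formulas $\varphi\wedge\copi\varphi$, $\varphi\wedge\no\varphi$, $\copi\varphi\wedge\no\copi\varphi$, $\no\varphi\wedge\copi\varphi$ is designated when $v(\varphi)=k$. A quick check of the tables (recall $\copi 1=1$, $\copi\bebe=0$, $\copi\nene=\bebe$, $\copi 0=1$, and $\no$ swaps $1\leftrightarrow 0$, $\bebe\leftrightarrow\nene$) shows: $v(\varphi)=1$ designates only $\varphi\wedge\copi\varphi$; $v(\varphi)=\bebe$ designates only $\varphi\wedge\no\varphi$; $v(\varphi)=\nene$ designates only $\copi\varphi\wedge\no\copi\varphi$; and $v(\varphi)=0$ designates only $\no\varphi\wedge\copi\varphi$. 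Consequently the disjunction of the four formulas is a tautology of $\bededos_T$, and any two of them are jointly explosive. By Theorem~\ref{adequacy} these facts transfer to $\hachedost$, and by Corollary~\ref{validez-instancias} (or Theorem~\ref{preserva-instancia}) all the corresponding instances are provable in $\hachecube$.

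Next I would turn the semantic trichotomy into the syntactic statement about a maximal $\Delta$. Since $\hachecube$ proves the disjunction of the four formulas, and $\Delta$ is maximal hence non‑trivial and deductively closed, I would argue (using Proposition~\ref{esq-validos}, the relevant disjunction‑elimination available through the positive‑logic axioms \pos 8, \pos 9, and maximality of $\Delta$) that at least one of the four conjuncts lies in $\Delta$: if none did, then by maximality adding any of them makes $\Delta$ trivial, so $\Delta\macube\neg(\chi_i)$ for each conjunct $\chi_i$, which together with $\macube\chi_1\vee\chi_2\vee\chi_3\vee\chi_4$ forces triviality of $\Delta$ — contradiction. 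For the "only one" half I would use that any two distinct $\chi_i,\chi_j$ are jointly explosive in $\hachecube$ (this is where Proposition~\ref{esq-validos} items (iii), (vi), (vii) come in, exactly as in the proof of Lemma~\ref{lema-sat-hq} where the analogous incompatibility of $\no\varphi\wedge\copi\varphi$ and $\no\copi\varphi\wedge\copi\varphi$ was already established): if two of them were in $\Delta$, then $\Delta$ would be trivial.

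The main obstacle I anticipate is bookkeeping the pairwise‑incompatibility proofs cleanly. Some pairs are handled directly by axiom {\bf ($\copi$-1)} (anything containing both $\varphi$ and $\copi\varphi$ and $\no\varphi$, or by Proposition~\ref{esq-validos}(iii) anything containing $\copi\varphi\wedge\no\copi\varphi$ alongside $\varphi$), but pairs like $\varphi\wedge\copi\varphi$ versus $\no\varphi\wedge\copi\varphi$, or $\varphi\wedge\no\varphi$ versus $\copi\varphi\wedge\no\copi\varphi$, require chaining through $\copi\alpha\sii\copi\no\alpha$ (Proposition~\ref{esq-validos}(vi)), $\no\copi\alpha\sii\no\copi\no\alpha$ (vii), and (xi), i.e. $(\alpha\wedge\no\alpha)\sii(\no\copi\alpha\wedge\copi\copi\alpha)$. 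A systematic way to avoid case‑by‑case pain is to note semantically that each $\chi_i$ is equivalent in $\bededos_T$ to "$\varphi$ has value $k_i$", and the values are distinct, so any two $\chi_i\wedge\chi_j$ ($i\neq j$) is a contradiction of $\bededos_T$; then one single appeal to Theorem~\ref{adequacy} plus Theorem~\ref{preserva-instancia} gives $\chi_i\wedge\chi_j\macube\beta$ for all $\beta$ in $\hachecube$, and maximality finishes it. I would write the proof in that streamlined form, citing Lemma~\ref{lema-sat-hq} and Proposition~\ref{esq-validos} for the pieces already available.
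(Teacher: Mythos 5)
Your proof is correct in substance but takes a genuinely different route from the paper's. The paper argues syntactically by cases on whether $\varphi\in\Delta$: if $\varphi\in\Delta$, it uses Proposition~\ref{esq-validos}(iii) and maximality to exclude $\copi\varphi\wedge\no\copi\varphi$, then splits on which of $\copi\varphi$, $\no\copi\varphi$ fails to belong to $\Delta$ and applies the Peirce-like schemas \ref{esq-validos}(iv) and (v) to land in case (1) or (2); if $\varphi\notin\Delta$, it observes that $\Delta$ is then $\varphi$-saturated and invokes Lemma~\ref{lema-sat-hq} to land in case (3) or (4); mutual exclusion is read off from axiom \textbf{($\copi{}$-1)} and Proposition~\ref{esq-validos}(iii). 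You instead compress everything into two propositional facts about $\bededos_T$ --- the disjunction $\chi_1\vee\chi_2\vee\chi_3\vee\chi_4$ is a tautology, and each pair $\chi_i\wedge\chi_j$ ($i\neq j$) is explosive --- and transfer them to $\hachecube$ via Theorem~\ref{preserva-instancia}. This is more uniform and spares the case-by-case bookkeeping; what it buys the paper to proceed as it does is that the needed schemas are exactly those already catalogued in Proposition~\ref{esq-validos}, whereas your route leans wholly on the propositional completeness theorem. Both are legitimate, and indeed Proposition~\ref{esq-validos} itself is proved semantically, so the logical content is the same.

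Two slips should be repaired. First, your parenthetical description of $\no$ is wrong: in ${\bf DM4}$ (Table~\ref{alg-demorgan}) one has $\no\bebe=\bebe$ and $\no\nene=\nene$; it is not the swap $\bebe\leftrightarrow\nene$ (the introduction of the paper displays that other negation, but Definition~\ref{BD2} uses ${\bf DM4}$). Your four case conclusions are correct only for the fixed-point table --- with the swap, $\varphi\wedge\no\varphi$ would never be designated --- so the error is confined to the parenthesis, but it should be fixed. Second, in the existence step you cannot pass from $\chi_i\notin\Delta$ to $\Delta\macube\no\chi_i$: the paraconsistent $\no$ is not obtainable from triviality of $\Delta\cup\{\chi_i\}$, and it would not help anyway since $\chi_i,\no\chi_i\not\macube\beta$ in general. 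The correct move is $\Delta\macube\chi_i\imp\bot$ for a bottom formula $\bot$, via Corollary~\ref{sendmt} (legitimate because $\varphi$, hence each $\chi_i$, is a sentence), followed by proof by cases with \pos 8 applied to the provable disjunction to get $\Delta\macube\bot$, a contradiction. This is exactly the use of \pos 8 you gesture at, so the repair is immediate.
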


\begin{proof} Let $\alpha\in\for$ be an arbitrary formula. Then, $\alpha \in \Delta$ or $\alpha \not\in \Delta$.
	
	\paragraph{Case 1} If $\alpha\in \Delta$, by Proposition~\ref{esq-validos}(iii) and the maximality of $\Delta$, necessarily  
	$$
	\copi\alpha\wedge\no\copi\alpha \ \not\in \ \Delta
	$$
	Then, by Lemma \ref{maximas-lema}(b), one of the formulas $\copi\alpha$, \, $\no\copi\alpha$ does not belong to $\Delta$. 
	\paragraph{Case 1.1} If $\no\copi\alpha\not\in\Delta$, by the maximality of $\Delta$ we have that 
	$$
	\Delta,\, \no\copi\alpha\macube\copi\alpha
	$$
	and, by Proposition~\ref{esq-validos}(iv) it holds 
	$$\Delta\macube\copi\alpha$$
	Then, 
	$$\ \alpha\wedge\copi\alpha\in\Delta$$
	\paragraph{Case 1.2} If $\copi\alpha\not\in\Delta$, again, by the maximality of $\Delta$ it holds
	$$
	\Delta,\, \copi\alpha\macube\no\alpha
	$$
	and by Proposition \ref{esq-validos}(v)
	$$\Delta\macube\no\alpha$$
	that is, since $\Delta$ is closed, $\no\alpha\in\Delta$. Then,
	$$ \ \alpha\wedge\no\alpha\in\Delta$$

	\paragraph{Case 2}  $\alpha\not\in\Delta$. Then $\Delta$  is $\alpha$-saturated. Therefore, by Lemma \ref{lema-sat-hq}
	\begin{center}
		\ (3) $\no\alpha\wedge\copi\alpha\in \Delta$ \  or \
		\ (4) $\no\copi\alpha\wedge\copi\alpha\in \Delta$.
	\end{center}
	
	Finally, let us note that conditions (1)--(4) cannot hold simultaneously, moreover, they are mutually exclusive and this is consequence of Proposition  \ref{esq-validos}(iii) and axiom {\bf (\copi-1)}.
\end{proof}

\begin{defi}\label{def-testigos} Let $\Delta$ be an arbitrary set of formulas in the language $\rr L(\pt,{\bb P}_T,\mm S)$ and let $C$ be a nonempty set of constant symbols in the signature $\mm S$. We say that  $\Delta$ has witnesses in $C$ (or that it is a Henkin set)  for $\hachecube$ if it holds: 
\begin{quote}
for every sentence of the form  $\no\pt x\varphi$, there is a constant symbol $c\in C$ such that 
if $\Delta\mabedos\no\pt x\varphi$, then $\Delta\mabedos\no\varphi(c)$
\end{quote}
\end{defi}

\begin{teo}[Theorem of Constants]\label{const-conse}
Let $\Delta$ be an arbitrary set of formulas in the language $\rr L(\pt,{\bb P}_T,\mm S)$ and let $\sststile{\hachecube} C$ be the consequence relation of $\hachecube$ on the signature $\mm S_C$, which is obtained from $\mm S$ by adding the new constant symbols of $C$. Then, for every $\varphi\in\for(\mm S)$, 
\begin{center}
$\Delta\mabedos\varphi$ \ iff \ $\Delta\sststile\hachecube C\varphi$
\end{center}
That is, $\hachecube$ over $\mm S_C$ is a conservative extension of  $\hachecube$ over $\mm S$
\end{teo}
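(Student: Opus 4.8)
The plan is to prove the two directions separately; the ``only if'' direction is immediate and the ``if'' direction is the substantive one. For ``only if'', observe that every instance over $\mm S$ of an axiom schema of $\hachecube$ is also an instance over $\mm S_C$ of the same schema, the side conditions of \axi A, \axi C, {\bf ($\pt$-In1)} and {\bf ($\pt$-In2)} depending only on variables and not on which signature the formulas are read in; hence any derivation witnessing $\Delta\macube\varphi$ over $\mm S$ is at the same time a derivation witnessing $\Delta\sststile{\hachecube}{C}\varphi$.

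For ``if'', suppose $\Delta\sststile{\hachecube}{C}\varphi$ with $\Delta\cup\{\varphi\}\subseteq\for(\mm S)$, and fix a derivation $\psi_1,\dots,\psi_n=\varphi$ of $\varphi$ from $\Delta$ over $\mm S_C$. Only finitely many constant symbols $c_1,\dots,c_k$ from $C$ and only finitely many variables occur in this derivation, so, since $\mm V$ is denumerable, we may choose pairwise distinct variables $y_1,\dots,y_k\in\mm V$ none of which occurs (free or bound) anywhere in $\psi_1,\dots,\psi_n$. Let $\sigma$ be the map that simultaneously replaces each $c_i$ by $y_i$ throughout a formula. The goal is to show that $\sigma(\psi_1),\dots,\sigma(\psi_n)$ is a derivation of $\varphi$ from $\Delta$ over $\mm S$; since $\varphi$ and every member of $\Delta$ contain no symbol from $C$, we have $\sigma(\varphi)=\varphi$ and $\sigma[\Delta]=\Delta$, so this yields $\Delta\macube\varphi$, as required.

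To establish that claim one goes through the derivation step by step. If $\psi_j\in\Delta$, then $\sigma(\psi_j)=\psi_j\in\Delta$. If $\psi_j$ is an instance of a propositional axiom of $\hachedost$, then $\sigma(\psi_j)$ is an instance of the same schema, since $\sigma$ commutes with the propositional connectives. For the quantifier axioms one uses that $\sigma$ commutes with substitution of a term for the quantified variable $x$ (recall $x\neq y_i$ for all $i$, and $\sigma$ introduces no occurrence of $x$) and that ``$t$ is free for $x$ in $\varphi$'' entails ``$\sigma(t)$ is free for $x$ in $\sigma(\varphi)$'': indeed $\sigma(\varphi)$ has exactly the quantifier occurrences of $\varphi$, and the fresh variables $y_i$ cannot be captured since no subformula of the form $\pt y_i(\cdots)$ occurs in the derivation; hence $\sigma(\psi_j)$ is an instance of \axi A, \axi B or \axi C whenever $\psi_j$ is. Applications of \emepe\ are preserved because $\sigma$ commutes with $\imp$. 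Finally, an application of {\bf ($\pt$-In1)} or {\bf ($\pt$-In2)} is preserved because $\sigma(\pt x\beta)=\pt x\,\sigma(\beta)$ and because the side condition $x\notin\libre{\alpha}$ forces $x\notin\libre{\sigma(\alpha)}$, as replacing constants by variables distinct from $x$ does not turn $x$ into a free variable. Thus $\sigma(\psi_1),\dots,\sigma(\psi_n)$ is a legitimate derivation over $\mm S$, which finishes the argument.

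The only genuinely delicate point is the bookkeeping around the side conditions of the quantifier rules and axioms: one must make sure that replacing the new constants with \emph{fresh} variables neither breaks a ``free for'' requirement in \axi A and \axi C nor the eigenvariable-style restriction $x\notin\libre{\alpha}$ of {\bf ($\pt$-In1)} and {\bf ($\pt$-In2)}. Freshness of the $y_i$ is precisely what rules out variable capture and keeps all of these conditions in force; everything else is routine.
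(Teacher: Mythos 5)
Your proof is correct and is essentially the argument the paper intends: the paper's own ``proof'' is only a citation of the standard constant-elimination argument (Carnielli--Coniglio, Theorem 7.5.2), which is exactly the replace-new-constants-by-fresh-variables procedure you carry out in detail, including the necessary checks that freshness preserves the ``free for'' conditions of \axi A/\axi C and the side conditions of {\bf ($\pt$-In1)}/{\bf ($\pt$-In2)}. You have simply written out in full what the paper delegates to the reference.
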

\begin{proof}
Analogous to the proof of  \cite[Theorem 7.5.2]{CarCon}.
\end{proof}

\

\begin{teo}\label{teo-testigos}
Let $\Delta\subseteq\for(\mm S)$. Then, there exists $\Delta^W\subseteq\for(\mm S)$ with witnesses in the set $C$ for $\hachecube$ such that $\Delta \subseteq \Delta^W$ and
\begin{center}
$\Delta\macube\varphi$ \ iff \ $\Delta^W\sststile\hachecube C\varphi$
\end{center}
Besides, any extension of $\Delta^W$ by sentences in the signature $\mm S_C$ is a set with witnesses in  $C$.
\end{teo}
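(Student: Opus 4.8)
The plan is to carry out a Henkin-style saturation, adjoining to $\Delta$ one ``witnessing conditional'' for each sentence of the form $\no\pt x\varphi$. Working over the signature $\mm S_C$ of Theorem~\ref{const-conse}, where $C$ is a denumerable set of fresh constant symbols, fix an enumeration $\no\pt x_0\varphi_0,\ \no\pt x_1\varphi_1,\ \dots$ of all $\mm S_C$-sentences of that shape and define an increasing chain $\Delta=\Delta_0\subseteq\Delta_1\subseteq\cdots$ by
$$\Delta_{n+1}\ :=\ \Delta_n\cup\set{\no\pt x_n\varphi_n\imp\no\varphi_n(c_n/x_n)},$$
where $c_n\in C$ is chosen so as not to occur in $\Delta_n$ nor in $\no\pt x_n\varphi_n$; this is possible since at any finite stage only finitely many constants of $C$ have been used while $C$ is infinite. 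Put $\Delta^W:=\bigcup_n\Delta_n$; as each $\no\pt x_n\varphi_n$ is a sentence, so is every adjoined formula. Then $\Delta^W$ has witnesses in $C$ (Definition~\ref{def-testigos}): if $\Delta^W\macuhen\no\pt x\varphi$ for a sentence $\no\pt x\varphi=\no\pt x_n\varphi_n$, the conditional $\no\pt x_n\varphi_n\imp\no\varphi_n(c_n/x_n)$ lies in $\Delta^W$, so $\Delta^W\macuhen\no\varphi_n(c_n/x_n)$ by \emepe. The final clause follows identically: any $\Theta\supseteq\Delta^W$ obtained by adjoining $\mm S_C$-sentences still contains every such conditional, so, by monotonicity of $\macuhen$, $\Theta$ too has witnesses in $C$.

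It remains to establish $\Delta\macube\varphi$ \ iff \ $\Delta^W\macuhen\varphi$, for $\varphi\in\for(\mm S)$. Left to right is trivial, since $\Delta\subseteq\Delta^W$ and $\hachecube$ over $\mm S_C$ extends $\hachecube$ over $\mm S$ (Theorem~\ref{const-conse}). For the converse, a $\hachecube$-derivation being a finite sequence, $\Delta^W\macuhen\varphi$ already yields $\Delta_N\macuhen\varphi$ for some $N$; a downward induction on $N$ then brings this down to $\Delta_0=\Delta\macuhen\varphi$, hence (conservativity again) to $\Delta\macube\varphi$, provided that each single extension step is conservative: whenever $c\in C$ occurs in none of $\Gamma$, $\chi$, $\sigma$, where $\sigma:=\no\pt x\psi$ is a sentence, and $\Gamma,(\sigma\imp\no\psi(c/x))\macuhen\chi$, then $\Gamma\macuhen\chi$.

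This conservativity lemma is where the real work lies. I would prove it along the classical lines: by the Deduction Metatheorem for sentences (Corollary~\ref{sendmt}) we get $\Gamma\macuhen(\sigma\imp\no\psi(c/x))\imp\chi$; since $c$ occurs in none of $\Gamma$, $\chi$, $\sigma$, the ``fresh constant behaves as a parameter'' argument underlying the proof of Theorem~\ref{const-conse} gives $\Gamma\macuhen\pt y\big((\sigma\imp\no\psi(y/x))\imp\chi\big)$ for a new variable $y$; instantiating $y$ by the term $x$ through axiom \axi A\ and \emepe\ yields $\Gamma\macuhen(\sigma\imp\no\psi)\imp\chi$; and finally Proposition~\ref{teoremas-hachecu}(viii) — applied with the sentence $\sigma$ in the role of $\varphi$, with $\psi$ in the role of $\phi$, and with $\chi$ in the role of $\psi$ — converts this into $\Gamma\macuhen(\sigma\imp\no\pt x\psi)\imp\chi$, i.e.\ $\Gamma\macuhen(\sigma\imp\sigma)\imp\chi$, whence $\Gamma\macuhen\chi$ since $\sigma\imp\sigma$ is provable. (Both applications carry the usual side conditions on free variables, automatically met when $\varphi$ is a sentence — the case needed for the completeness proof.)

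I expect this conservativity lemma to be the main obstacle, and within it the genuinely delicate point is the weakness of the paraconsistent negation $\no$. Because $\no$ is not congruential, and $\existe x\alpha$ and $\nof\pt x\nof\alpha$ are not strongly equivalent (Remark~\ref{not-strongeq}, Proposition~\ref{def-ex-pt}), the usual classical quantifier-shift identities one would invoke to ``absorb'' a Henkin witness are not available. The argument must instead be channelled through exactly those facts that $\hachecube$ does prove about the interaction of $\no$ and $\pt$ — the ad hoc axioms \axi B, \axi C, and, above all, the witness-elimination schemata of Proposition~\ref{teoremas-hachecu} (items~(vi), (vii) and~(viii), which seem to have been isolated with precisely this step in mind) — while carefully keeping track of the free-variable side conditions on Corollary~\ref{sendmt}, Proposition~\ref{Predmdt}, and the rules ($\pt$-{\bf In1}) and ($\pt$-{\bf In2}). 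The remaining ingredients — the bookkeeping of fresh constants, the $\omega$-indexed iteration, and the transfer back to $\mm S$ via Theorem~\ref{const-conse} — are entirely routine.
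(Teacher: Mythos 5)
Your proof is correct and follows essentially the same route as the paper's: the same witnessing conditionals $\no\pt x\psi\imp\no\psi(c)$ adjoined one per sentence, the same reduction of conservativity to eliminating one conditional at a time via the Deduction Metatheorem for sentences and the replacement of the fresh constant by a fresh variable, and the same appeal to Proposition~\ref{teoremas-hachecu}(viii) as the witness-elimination device. The only (cosmetic) divergence is at the very end: you instantiate the generalized variable back to $x$ so that the residual antecedent becomes $\sigma\imp\sigma$, whereas the paper keeps the fresh variable $y$ and discharges the antecedent $\no\pt x\psi(x)\imp\no\pt y\psi(y)$ via \axi C and Proposition~\ref{teoremas-hachecu}(iii) --- a variant that also sidesteps the side condition $x\notin\libre{\chi}$ which, as you note, your version needs when the conclusion is not a sentence.
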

\begin{proof}
Let $\Delta$ be a set of formulas in $\rr L(\mm S)$ and let $C$ be a set of new constant symbols such that $|C|=\|\rr L(\mm S)\|$. 
Let
\begin{center}
$\psi_0(x_{i_0}),\psi_1(x_{i_1}),\psi_2(x_{i_2}),\ldots,\psi_k(x_{i_k}),\ldots$ 
\end{center}
be an enumeration of all formulas with one free variable in the language $\rr L(\mm S_C)$. We choose a sequence of elements in $C$
\begin{center}
$a_0,\ldots, a_n,\ldots$
\end{center}
in such a way that
\begin{itemize}
\item $a_k$ does not occur in the formulas  
$\psi_0(x_{i_0}),\psi_1(x_{i_1}),\ldots,\psi_{k-1}(x_{i_{k-1}})$, and
\item each $a_k$ is different from $a_0,a_1,\ldots,a_{k-1}$.
\end{itemize}
Now, consider the following sentences:
\begin{center}
$(S_k)\quad 
\no\pt x_{i_k}\psi_k(x_{i_k})
\imp
\no\psi_k(a_k)$
\end{center}

By construction, we can assert that each of the new symbols $a_k$ occurs only in $(S_k)$.
Let $\Delta^W \ := \ \Delta\cup\{(S_i)\}_{i\in\omega}$. By construction, we have that $\Delta\subseteq\Delta^W\subseteq\for(\mm S_C)$ and $\Delta^W$ has witnesses in $C$ (see  Definition \ref{def-testigos}). Let us see that  $\Delta^W$ is a conservative extension of $\Delta$. Indeed, let $\varphi\in\for(\mm S)$ and suppose that  $\Delta^W\macuhen\varphi$. Since $\hachecube$ is finitary, we can assert that there exists a finite set $\Delta_0\subseteq \Delta^W$ such that $\Delta_0\macuhen\varphi$. Besides, $\Delta_0$ has a finite number of formulas $(S_k)$.

\

 Suppose that the formula $(S_{n})$ is in $\Delta_0$ and let
 $\Delta_1 \ :=  \ \Delta_0 \setminus \set{\no\pt x_{i_n}{\psi_n}\imp\no\psi_n(a_{n})}$. By Corollary \ref{sendmt}, \ 
$\Delta_1\macuhen (\no\pt x_{i_n}\psi_{n}\imp\no\psi(a_{n}))\imp\varphi$.

\

By an usual technique that can be found in the literature (for instance \cite[Theorem 7.5.2]{CarCon}), we can built a deduction
$$\Delta_1\macuhen (\no\pt x_{i_n}\psi_{n}\imp\no\psi_{n}(y))\imp\varphi$$
where $y$ is a variable symbol which does not occur in $\no\pt {x_{i_n}}{\psi_n}\imp\no\psi_n(a_n)\imp\varphi$.
\item From Proposition \ref{teoremas-hachecu}(viii), it follows
$$\Delta_1\macuhen\big(\no\pt x_{i_n}\psi_{n}(x_{i_n})\imp\no\pt y\psi_{n}(y)\big)\imp\varphi$$
By \axi C, we have
$\macuhen\no\psi_{i_n}(x_{i_n})\imp\no\pt y\psi_{i_n}(y)$, 
and by Proposition \ref{teoremas-hachecu}(iii), 
$\macuhen \no\pt x_{i_n}{\psi_n}(x_{i_n})\imp{\no\pt y}{\psi_n}(y)$. Therefore
$$\Delta_1\macuhen\varphi$$
Repeating this procedure a finite number of times (since $\Delta_0$ is finite) we conclude
	$$\Delta_n\macuhen\varphi,$$ 
	where $\Delta_n$ is a theory without the axioms $(S_k)$. Then,  $\Delta\macuhen\varphi$ and by Theorem \ref{const-conse}, we have $\Delta\macube\varphi$. Then, $\Delta^W$ is a conservative extension of $\Delta$.
\end{proof}

\begin{defi} \label{maxtheory}
 We say that the set of sentences  $\Theta$ in the language $\rr L(\mm S)$ is a  {\em maximal theory for $\hachecube$} it there is a maximal theory $\Delta\subseteq \for(\mm S)$ in $\hachecube$ such that
	$$\Theta=\Delta\cap\sent(\mm S)$$
\end{defi}

\begin{teo}[Henkin's model existence]\label{henbebd}
Let $\Theta$ be a maximal theory and with witnesses for $\hachecube$. Then, $\Theta$ has a  \cube-model.
\end{teo}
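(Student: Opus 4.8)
The plan is to build a canonical $\cube$-structure directly from the maximal theory with witnesses $\Theta$, following the standard Henkin construction adapted to the four-valued setting. First I would take as domain $A$ the set ${\sf clo}(\mm S)$ of closed terms of the (Henkinized) signature — or, if one prefers, the quotient of ${\sf clo}(\mm S)$ by the congruence induced by a suitable equality-like relation, but since there is no genuine equality predicate here we may simply use ${\sf clo}(\mm S)$ itself. The interpretation of function and constant symbols is the obvious term-algebra one: $f^{\ff A}(t_1,\dots,t_n)=f(t_1,\dots,t_n)$ and $c^{\ff A}=c$. The delicate part is defining $P^{\ff A}$ on tuples of closed terms so that the induced valuation $\|\cdot\|_{\cube}^{\ff A}$ assigns a designated value to a sentence exactly when it lies in $\Theta$. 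For this I would use Theorem~\ref{well-def-hq}: for each atomic sentence $P(t_1,\dots,t_n)$ exactly one of the four cases $\varphi\wedge\copi\varphi$, $\varphi\wedge\no\varphi$, $\copi\varphi\wedge\no\copi\varphi$, $\no\varphi\wedge\copi\varphi$ (with $\varphi=P(t_1,\dots,t_n)$) is in $\Delta$, and these four cases correspond precisely to the four truth values $1$, $\bebe$, $\nene$, $0$ respectively (this correspondence is exactly the content of the operators $\copi$, $\neg$ on ${\bf 4}$ as tabulated). So I would set $P^{\ff A}(t_1,\dots,t_n)$ to be that unique value.

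Next I would prove the \emph{Truth Lemma}: for every sentence $\varphi\in\sent(\mm S_C)$ and the (unique) assignment-independent value, $\|\varphi\|_{\cube}^{\ff A}\in\mm D_\bededos$ iff $\varphi\in\Theta$; in fact I would prove the sharper statement matching the four cases of Theorem~\ref{well-def-hq}, i.e.\ $\|\varphi\|_{\cube}^{\ff A}=k$ iff the corresponding case formula (e.g.\ $\varphi\wedge\copi\varphi$ for $k=1$) lies in $\Delta$. The proof is by induction on the complexity of $\varphi$. The atomic case is the definition of $P^{\ff A}$. The propositional connective cases ($\wedge$, $\vee$, $\imp$, $\neg$, $\copi$) follow from Lemma~\ref{maximas-lema} together with Proposition~\ref{esq-validos} and the axioms of $\hachedost$ — here one leans on Lemma~\ref{lema-instancias}/\ref{lema-axiomas} to reduce a compound sentence's value to a $\bededos$-morphism applied to the components, and on the fact (Theorem~\ref{well-def-hq}) that $\Delta$ "decides" each subformula into exactly one of four states that mirror the four-valued truth tables. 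For $\copi$ in particular I would use the schemata $\copi\alpha\sii\copi\no\alpha$, $\no\copi\alpha\sii(\alpha\sii\no\alpha)$ and the $\copi$-axioms to verify all four input-output pairs of the table for $\copi$ on ${\bf 4}$.

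The quantifier cases are where the \emph{witnesses} hypothesis does the work, and this is the main obstacle. For $\pt x\varphi$: by Definition~\ref{cube-estruc}(4), $\|\pt x\varphi\|_{\cube}^{\ff A}=\inf\{\|\varphi[t/x]\|_{\cube}^{\ff A} : t\in A\}$, and since $A={\sf clo}(\mm S_C)$ this infimum ranges over all closed instances. Using the induction hypothesis, the value $1$ (i.e.\ $\pt x\varphi\in\Theta$, which by Theorem~\ref{well-def-hq} means $\pt x\varphi\wedge\copi\pt x\varphi\in\Delta$) should correspond to every instance $\varphi[t/x]$ being designated, which follows from Lemma~\ref{maximas-lema}(d) in one direction; the converse direction — if every instance is in $\Theta$ then $\pt x\varphi\in\Theta$ — is the genuinely hard step and is exactly why a \emph{maximal} theory (not merely saturated) is needed together with the witness property: from $\pt x\varphi\notin\Delta$ one gets $\Delta,\pt x\varphi\macube\bot$, and must derive a contradiction using a witness constant. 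More precisely, the cases where the infimum drops to $\nene$ or $0$ are handled by axioms \axi B, \axi C and Lemma~\ref{maximas-lema}(e): if some instance $\no\varphi[c/x]$ is in $\Delta$ then $\no\pt x\varphi\in\Delta$, and conversely if $\no\pt x\varphi\in\Delta$ then by the witness property there is $c\in C$ with $\no\varphi[c/x]\in\Delta$. Matching these syntactic facts against the lattice computation of $\inf$ over $\{1,\bebe,\nene,0\}$ (using the order: $0$ and $\bebe$ below $1$, etc., and Remark~\ref{DM-infty} for the interplay with $\neg$) is the crux, and it has to be done carefully for each of the four possible values of $\|\pt x\varphi\|_{\cube}^{\ff A}$. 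The case $\ex x\varphi$ is then immediate by Proposition~\ref{def-ex-pt} (interdefinability via $\no$) and Remark~\ref{DM-infty}. Finally, once the Truth Lemma is established, since $\Theta\subseteq\Delta$ consists of sentences and each $\theta\in\Theta$ gets a designated value, $\ff A$ is a $\cube$-model of $\Theta$, completing the proof.
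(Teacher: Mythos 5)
Your proposal follows essentially the same route as the paper's proof: the term model over ${\sf clo}(\mm S)$, the four-way definition of $P^{\ff A}$ justified by Theorem~\ref{well-def-hq}, the sharpened four-case Truth Lemma proved by induction on complexity, and the use of the witness property together with axioms \axi{B}, \axi{C} and Lemma~\ref{maximas-lema}(d),(e) in the universal-quantifier case. The plan is correct and matches the paper's argument in all essentials.
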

\begin{proof} 
We define the  \cube-structure $\ff A$ on $\mm S$ as follows:
\begin{itemize}
\item 
the domain $A$ is the set of all closed terms in $\mm S$, ${\sf clo}(\mm S)$
\item 
if $c$ is a constant symbol of $\mm S$, \
$c^\ff A:=c$,
\item 
if $f$ is an $n$-ary function symbol of $\mm S$, $f^\ff A:A^n\imp A$ is given by  $f^\ff A(\vect a):=f(\vect a)$,
\item if $R$ is an $n$-ary predicate symbol of $\mm S$, $R^\ff A$ is given as follows 
\begin{align*}
R^\ff A(\vect a)=1 
&& \ \mbox{ iff } \ &&
\bduno{R(\vect a)}\in \Theta \\
R^\ff A(\vect a)=\bebe 
&& \ \mbox{ iff } \ &&
\bdebe{R(\vect a)}\in \Theta \\
R^\ff A(\vect a)=\nene 
&& \ \mbox{ iff } \ &&
\bdene{R(\vect a)}\in \Theta \\
R^\ff A(\vect a)=0 
&& \ \mbox{ iff } \ &&
\bdcero{R(\vect a)}\in \Theta 
\end{align*}
\end{itemize}
Note that the predicates are well defined in virtue of Theorem~\ref{well-def-hq}.
Then, for every sentence $\varphi\in\sent(\mm S)$, the  \cube-structure $\ff A$ satisfies the following property (P)
\begin{align*}
&(1)\quad
\valbede\varphi= 1
&\ \mbox{ iff } \ &&
\bduno\varphi\in \Theta
\\
&(2)\quad
\valbede\varphi= \bebe
&\ \mbox{ iff } \ &&
\bdebe\varphi\in \Theta
\\
&(3)\quad
\valbede\varphi= \nene
&\ \mbox{ iff } \ &&
\bdene\varphi\in \Theta\\
&(4)\quad
\valbede\varphi= 0
&\ \mbox{ iff } \ &&
\bdcero\varphi\in \Theta
\end{align*}
Indeed, we use induction on the complexity of the sentence $\varphi$. We show only the ``if'' part of conditions (1)--(4) (we left the ``only if'' to the reader).
\paragraph{Base step.} 
If $\varphi$ is the atomic formula $R(\vect a)$, (P) holds by the definition of the \cube-structure $\ff A$ and the fact that $\valbede{R(\vect a)}=R^\ff A(\vect a)$.

\paragraph{Inductive step.} Let $\varphi$ a sentence of complexity $n\geqq 1$.
Then, we consider the following subcases:
\begin{center}
\begin{tabular}{ll}
{\bf (a)} \ $\varphi$ is $\no\psi$&
{\bf (b)} \ $\varphi$ is $\copi\psi$\\[2mm]
{\bf (c)} \ $\varphi$ is $\psi\wedge\chi$&
{\bf (d)} \ $\varphi$ is $\pt x\psi(x)$
\end{tabular}
\end{center}
We shall just analyze the subcase {\bf (d)}. The others are left to the patient reader.
\paragraph{Case d:} $\varphi$ is $\pt  x\psi$ \\[3mm]
{\bf (d.1)} \ 
If $\bduno{\pt x\psi}\in \Theta$, 
then, by Lemma \ref{maximas-lema}(b) and (d), we have $\psi(a)\in\Theta$ for all $a\in A$. By (I.H.),  $\valbede{\psi(a)}\in\set{1,\bebe}$ for all $a\in A$. Suppose that there is $b\in A$ such that $\valbede{\psi(b)}=\bebe$. Then, by (I.H.), $\no\psi(b)\in\Theta$ and then, by  \axi C and Lemma \ref{maximas-lema}(iii), we get $\no\pt x\psi\in\Theta$. This contradicts the fact that $\Theta$ is non trivial  (recall Proposition~\ref{esq-validos}(iii)), and therefore $\valbede{\pt x\psi}=1$.
\\[2mm]
{\bf (d.2)} If $\bdebe{\pt x\psi}\in\Theta$ then, by Lemma \ref{maximas-lema}(a) and (b), we know that 
\begin{center}
	$\bebe\leq \valbede{\psi(a)}\leq 1$, for all $a\in A$ (I.H.).
\end{center} 
On the other hand, from Lemma \ref{maximas-lema}(b) it follows that $\no\pt x\psi\in\Theta$
and then, there exists a witness $b\in A$ such that $\no\psi(b)\in\Theta$. 
Now, by (I.H.), we have that  $\valbede{\psi(b)}\in\set{0,\bebe}$. 
Then,
\begin{center}
	$\valbede{\psi(b)}=\bebe.$ 
\end{center}
By definition of the order in {\bf 4}, we have $\valbede{\pt x\psi}=\bebe$.
\\[2mm]
{\bf (d.3)} Suppose that $\bdene{\pt x\psi}\in\Theta$. Since  $\mobedos(\copi\alpha\wedge\no\copi\alpha)\imp\nof\alpha$, as it is easy to check,  then $\nof\pt x\psi\in\Theta$, by   Lemma \ref{maximas-lema}(a).
Then, by Proposition~\ref{teoremas-hachecu}(ii) and the fact that $\Theta$ is closed, we have  $\no\pt x\no\nof\psi\in\Theta$, and therefore, there exists $b\in A$ such that $\no\no\nof\psi(b)\in\Theta$. Hence, by {\bf (DNeg)}, $\nof\psi(b)\in \Theta$. Then, by Lemma \ref{maximas-lema}(b) and (c), it holds one of the following cases: 
either $\bdcero{\psi(b)}\in \Theta$ or 
$\bdene{\psi(b)}\in\Theta$.
By  (I.H.),  $\valbede{\psi(b)}\in\set{\nene,0}$. Now, suppose that $\valbede{\psi(c)}\in\set{0,\bebe}$ for some $c\in A$. By (I.H.), we have  $\no\psi(c)\in\Theta$,
and then, by \axi C, we have $\no\pt x\psi\in\Theta$. 
Therefore $\Theta$ is trivial, taking into account that $(\copi{\alpha} \land \neg\copi{\alpha} \land \neg \alpha) \to \beta$ is provable in $\hachedost$, by Proposition~\ref{esq-validos}. From this contradiction we conclude that  
\begin{center}
	$\valbede{\psi(a)}\in\set{1,\nene}$ for all $a\in A$
\end{center}
and then
\begin{center}
	$\valbede{\psi(b)}=\nene$.
\end{center}
Finally, by definition of the order in {\bf 4}, $\valbede{\pt x\psi}=\nene$.
\\[2mm]
{\bf (d.4)} The proof of this case is analogous to case {\bf (d.3)}. \\[2mm]
Therefore, we have proved the property (P).

Let $\theta\in\Theta$. Since $\Theta$ is a maximal theory 
we have that either it holds 
$\theta\wedge\copi\theta\in\Theta$ or it holds $\theta\wedge\no\theta\in\Theta$. By the property (P), we conclude that $\valbede\theta\in\set{1,\bebe}$.
Therefore, $\ff A$ is a model of $\Theta$.
\end{proof}

\begin{teo}[Completeness for sentences]\label{comp-sent-hbd2}
Let $\Delta\cup\set{\varphi}\subseteq \sent(\mm S)$, then
\begin{center}
$\Delta\mocube\varphi$ \ if and only if \ $\Delta\macube\varphi$
\end{center}
\end{teo}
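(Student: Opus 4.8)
The plan is to prove the two directions separately, with soundness already in hand and completeness being the real work. The soundness direction, $\Delta\macube\varphi \Rightarrow \Delta\mocube\varphi$, is exactly Theorem~\ref{sound-qbd2}, so nothing new is needed there. For completeness I would argue by contraposition: assume $\Delta\nvdash_{\hachecube}\varphi$ and produce a $\cube$-model of $\Delta$ that is not a model of $\varphi$. Since $\Delta\cup\{\varphi\}\subseteq\sent(\mm S)$, I may work entirely with theories (sets of sentences).

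First I would pass to a language with witnesses. By Theorem~\ref{teo-testigos} there is a set $\Delta^W\subseteq\for(\mm S_C)$ with witnesses in $C$ for $\hachecube$, containing $\Delta$, and such that $\Delta\macube\varphi$ iff $\Delta^W\sststile{\hachecube}{C}\varphi$. Since $\Delta\nvdash\varphi$, we get $\Delta^W\nvdash_C\varphi$. Now apply the Lindenbaum--\L os theorem (Theorem~\ref{L-A-lemma}; recall $\hachecube$ is finitary by Theorem~\ref{preserva-instancia}'s underlying matrix argument, or directly because it extends a finitary propositional base and the standard first-order machinery preserves finitariness) to obtain a $\varphi$-saturated set $\Delta'\supseteq\Delta^W$ in $\hachecube$ over $\mm S_C$. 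By Proposition~\ref{satu=max}, $\Delta'$ is maximal in $\hachecube$. Moreover, by the last clause of Theorem~\ref{teo-testigos}, any extension of $\Delta^W$ by sentences of $\mm S_C$ still has witnesses in $C$; so I must take a little care that the saturation step only adds sentences, or restrict attention to $\Theta:=\Delta'\cap\sent(\mm S_C)$, which by Definition~\ref{maxtheory} is a maximal theory for $\hachecube$, and which still has witnesses in $C$. Then $\Theta$ is a maximal theory with witnesses, so by the Henkin model existence theorem (Theorem~\ref{henbebd}) it has a $\cube$-model $\ff A$ over $\mm S_C$.

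It remains to check that $\ff A$ witnesses $\Delta\nmodels\varphi$. Since $\Delta\subseteq\Delta^W\subseteq\Delta'$ and all members of $\Delta$ are sentences, $\Delta\subseteq\Theta$, so $\ff A$ is a $\cube$-model of $\Delta$. For $\varphi$: since $\Delta'$ is $\varphi$-saturated it is in particular closed and $\varphi\notin\Delta'$, hence $\varphi\notin\Theta$; by property (P) established in the proof of Theorem~\ref{henbebd} — more precisely, by the fact that $\valbede{\varphi}(s)\in\{1,\bebe\}$ iff $\bduno{\varphi}\in\Theta$ or $\bdebe{\varphi}\in\Theta$, which would force $\varphi\in\Theta$ by Lemma~\ref{maximas-lema}(b) — we conclude $\valbede{\varphi}\notin\{1,\bebe\}$, i.e. $\ff A\notin{\rm mod}_\cube(\varphi)$. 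Finally I would reduce the $\mm S_C$-model back to $\mm S$: by the Theorem of Constants (Theorem~\ref{const-conse}) the consequence relation does not change, and a $\cube$-model over $\mm S_C$ restricts to a $\cube$-structure over $\mm S$ satisfying the same $\mm S$-sentences, so $\Delta\nmodels_\cube\varphi$ already over $\mm S$. This gives $\Delta\nmodels_\cube\varphi$, completing the contrapositive.

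The main obstacle, and the step deserving the most care, is the bookkeeping around which set is maximal, which set has witnesses, and over which signature: the Henkin theorem requires a theory that is \emph{simultaneously} maximal and witnessed, and the natural order of operations (add witnesses, then Lindenbaumize) must be checked to preserve the witness property — this is exactly why the final clause of Theorem~\ref{teo-testigos} is stated, and I would invoke it explicitly. A secondary, routine point is that $\varphi$ being a sentence is used twice: once to ensure $\Delta\cup\{\varphi\}$ survives the restriction to $\sent(\mm S_C)$ intact, and once so that ``$\varphi\notin\Theta$'' combined with property (P) genuinely yields $\valbede{\varphi}\notin\mm D_\bededos$. Everything else is a direct assembly of Theorems~\ref{sound-qbd2}, \ref{teo-testigos}, \ref{henbebd}, \ref{const-conse} and Proposition~\ref{satu=max}.
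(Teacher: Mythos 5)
Your proposal is correct and follows essentially the same route as the paper's proof: add witnesses via Theorem~\ref{teo-testigos}, Lindenbaumize and apply Proposition~\ref{satu=max}, restrict to sentences to get a maximal witnessed theory, build the Henkin model from Theorem~\ref{henbebd}, and read off $\valbede{\varphi}\notin\{1,\bebe\}$ from property (P). The only cosmetic difference is that the paper concludes via Theorem~\ref{well-def-hq} (placing $\varphi$ in one of the cases $\bdene{\varphi}\in\Theta$ or $\bdcero{\varphi}\in\Theta$) rather than arguing directly from $\varphi\notin\Theta$, and your extra care about the signature $\mm S_C$ versus $\mm S$ is, if anything, slightly more precise than the paper's own bookkeeping.
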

\begin{proof}
Suppose that $\Delta\not\macube\varphi$.  
By Theorem \ref{teo-testigos}, we can extend (conservatively) $\Delta$ to a set $\Delta^W\subseteq\for(\mm S_C)$ with witnesses in $C\neq\emptyset$.  Since $\Delta^W\not\macube\varphi$,
by Theorem~\ref{L-A-lemma} and Proposition~\ref{satu=max}, we know that there exists  
$\Phi\subseteq \for(\mm S_C)$ maximal in  $\hachecube$ such that  $\varphi \not\in \Phi$.
Hence $\Theta=\Phi\cap\sent(\mm S)$ is a  maximal theory for $\hachecube$ (recall Definition~\ref{maxtheory}) such that  $\varphi \not\in \Phi$, which extends $\Delta$. Moreover, $\Theta$ has witnesses.
Then, by Theorem \ref{henbebd}, we know that there is a $\hachecube$-model of $\Theta$. On the other hand, by Definition \ref{well-def-hq}, it must hold one of the following conditions
\begin{align*}
	\bdene{\varphi}&\in \Theta&
	\bdcero{\varphi}&\in \Theta
\end{align*}
and then
$$
\valbede{\varphi}=\set{\nene,0}.
$$
Since $\ff A$ is also a model of $\Delta$, it is not the case that $\Delta\mocube\varphi$.
\end{proof}

\begin{nota} \label{obsquant} 
Observe that completeness was proved only for sentences, while soundness was stated for formulas in general (recall Theorem~\ref{sound-qbd2}). However, a completeness theorems for formulas in general (i.e., for formulas possibly having free variables) can be obtained from Theorem~\ref{comp-sent-hbd2} by observing the following: for any formula $\psi$ let $(\forall)\psi$ be the universal closure of $\psi$, that is: if $\psi$ is a sentence then  $(\forall)\psi=\psi$, and if $\psi$ has exactly the variables $x_1,\ldots,x_n$ occurring free then  $(\forall)\psi=(\forall x_1)\cdots(\forall x_n)\psi$. If $\Gamma$ is a set of formulas then  $(\forall)\Gamma := \{(\forall)\psi  \ : \  \psi \in \Gamma\}$. Thus, it is easy to see that,   for every set $\Gamma \cup \{\varphi\}$ of formulas:  $\Gamma\mocube \varphi$ \ iff \   $(\forall)\Gamma\macube (\forall)\varphi$, and $\Gamma \mocube\varphi$ \ iff \   $(\forall)\Gamma \macube (\forall)\varphi$. From this, a general completeness result follows (see Corollary below).
\end{nota}

\begin{coro}[Adequacy theorem]\label{adecuacion-hbd2} \  $\Delta\mocube\varphi$ \ iff  \ $\Delta\macube\varphi$.
\end{coro}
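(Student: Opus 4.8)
The plan is to derive the general adequacy theorem from the completeness result for sentences (Theorem~\ref{comp-sent-hbd2}) together with the soundness theorem (Theorem~\ref{sound-qbd2}), by passing through universal closures as outlined in Remark~\ref{obsquant}. Concretely, for a set of formulas $\Gamma\cup\set\varphi$ I write $(\forall)\varphi$ for the universal closure of $\varphi$ and $(\forall)\Gamma=\set{(\forall)\psi \ : \ \psi\in\Gamma}$, and I want to establish the two ``closure equivalences'': (i) $\Gamma\mocube\varphi$ iff $(\forall)\Gamma\mocube(\forall)\varphi$, and (ii) $\Gamma\macube\varphi$ iff $(\forall)\Gamma\macube(\forall)\varphi$. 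Granting these, the corollary is immediate: $\Delta\mocube\varphi$ iff $(\forall)\Delta\mocube(\forall)\varphi$ (by (i)) iff $(\forall)\Delta\macube(\forall)\varphi$ (by Theorem~\ref{comp-sent-hbd2}, since all formulas in $(\forall)\Delta\cup\set{(\forall)\varphi}$ are sentences) iff $\Delta\macube\varphi$ (by (ii)).

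For the semantic equivalence (i), the key observation is that a $\cube$-structure $\ff A$ is a $\cube$-model of $\varphi$ exactly when every assignment $s\in\ese A$ satisfies $\varphi$, and by Lemma~\ref{asig-var-libres}(ii) the value $\valbede\varphi(s)$ depends only on $s$ restricted to $\libre\varphi$; unwinding Definition~\ref{cube-estruc}(4) for $\pt$, one gets $\valbede{(\forall)\varphi}(s)=\inf\set{\valbede\varphi(s') \ : \ s'\in\ese A}$ when $\libre\varphi=\set{x_1,\dots,x_n}$, so $\ff A\in{\rm mod}_\cube(\varphi)$ iff $\ff A\in{\rm mod}_\cube((\forall)\varphi)$. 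Hence ${\rm mod}_\cube(\Gamma)={\rm mod}_\cube((\forall)\Gamma)$ and ${\rm mod}_\cube(\varphi)={\rm mod}_\cube((\forall)\varphi)$, which gives (i) directly from Definition~\ref{martilloprimer}.

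For the syntactic equivalence (ii), one direction uses the derived rule of generalization \gen\ (proved just before Proposition~\ref{teoremas-hachecu}): from $(\forall)\Gamma\macube(\forall)\varphi$ one recovers $\varphi$ from $(\forall)\varphi$ by repeated instantiation via \axi A and \emepe, and one recovers each $(\forall)\psi$ for $\psi\in\Gamma$ from $\psi$ by repeated application of \gen; combining, $\Gamma\macube\varphi$. The converse direction, $\Gamma\macube\varphi$ implies $(\forall)\Gamma\macube(\forall)\varphi$, is the more delicate point and is the main obstacle: one must take a derivation of $\varphi$ from $\Gamma$ and transform it into a derivation of $(\forall)\varphi$ from $(\forall)\Gamma$ while respecting the eigenvariable side conditions of {\bf ($\pt$-In1)} and {\bf ($\pt$-In2)}. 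The standard manoeuvre is that if $\Gamma\macube\varphi$ then only finitely many premises $\psi_1,\dots,\psi_k\in\Gamma$ are used, and since each $(\forall)\psi_i$ is a sentence, Corollary~\ref{sendmt} (Deduction Metatheorem for sentences) applies repeatedly to yield $\macube (\forall)\psi_1\imp(\cdots\imp((\forall)\psi_k\imp\varphi)\cdots)$; then \gen\ together with the distribution of $\pt$ over $\imp$ when the quantified variable is not free in the antecedent (available because the antecedents are sentences, hence closed) pushes the closure inward to give $\macube (\forall)\psi_1\imp(\cdots\imp((\forall)\psi_k\imp(\forall)\varphi)\cdots)$, and finally \emepe\ with the premises $(\forall)\psi_i$ yields $(\forall)\Gamma\macube(\forall)\varphi$. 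All of these ingredients are either stated in the excerpt or are routine consequences of the Hilbert calculus, so the proof of the corollary is short once the reduction to Remark~\ref{obsquant} is spelled out.
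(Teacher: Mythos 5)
Your proposal is correct and follows essentially the same route as the paper: the ``if'' direction is Theorem~\ref{sound-qbd2}, and the ``only if'' direction passes through universal closures via Remark~\ref{obsquant} and then applies Theorem~\ref{comp-sent-hbd2}. The only difference is that you supply proofs of the two closure equivalences that the paper states without proof in Remark~\ref{obsquant}, and both of your arguments (the filter property of $\set{1,\bebe}$ under infima for the semantic side, and the combination of \axi A, \gen\ and Corollary~\ref{sendmt} for the syntactic side) are sound.
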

\begin{proof} The {\em ``if''} part is precisely Theorem \ref{sound-qbd2}. The proof for the {\em ``only if''} part goes as follows:
	\begin{center}
		\begin{tabular}{rll|l}
		$\Delta\mocube\varphi$ 
		& only if & $(\pt)\Delta\mocube(\pt)\varphi$&Remark \ref{obsquant}\\[2mm]
		& only if &
		$(\pt)\Delta\macube(\pt)\varphi$& Theorem \ref{comp-sent-hbd2}\\[2mm]
		& only & 
		$\Delta\macube\varphi$ & Remark \ref{obsquant}		
	\end{tabular}
	\end{center}
\end{proof}

\begin{teo}[Compactness] Suppose that $\Delta\mocube\varphi$ then there exists a finite set  $\Delta_0\subseteq \Delta$ such that $\Delta_0\mocube\varphi$.
\end{teo}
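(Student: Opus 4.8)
The plan is to derive compactness for $\cube$ directly from the syntactic completeness and soundness that have already been established, using the fact that the Hilbert calculus $\hachecube$ is itself finitary. First I would invoke the Adequacy Theorem (Corollary~\ref{adecuacion-hbd2}): from the hypothesis $\Delta \mocube \varphi$ we immediately obtain $\Delta \macube \varphi$, i.e.\ there is a derivation of $\varphi$ from $\Delta$ in $\hachecube$. Since a derivation is by definition a finite sequence of formulas, only finitely many members of $\Delta$ are actually used as premises in it; collecting exactly those premises gives a finite subset $\Delta_0 \subseteq \Delta$ with $\Delta_0 \macube \varphi$.

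Having $\Delta_0 \macube \varphi$ for a finite $\Delta_0$, I would then apply the Soundness Theorem (Theorem~\ref{sound-qbd2}), which yields $\Delta_0 \mocube \varphi$. This is precisely the desired conclusion, so the proof is complete in essentially three lines: completeness to pass to the syntactic side, finiteness of proofs to extract $\Delta_0$, soundness to return to the semantic side. No induction or new construction is needed here; everything has been prepared by the preceding sections.

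The only point that requires a moment's care — and what I would flag as the sole potential obstacle — is the claim that a $\hachecube$-derivation uses only finitely many premises. This is true because $\hachecube$ has only the finitary rules \emepe, {\bf ($\pt$-In1)} and {\bf ($\pt$-In2)}, each with finitely many (indeed one or two) antecedents, and a derivation is a finite sequence; hence the set of formulas of $\Delta$ occurring in the sequence is finite. One should note that the side conditions on the quantifier rules (concerning free variables) are conditions on the formulas in the derivation, not on $\Delta$, so restricting the premise set to $\Delta_0$ does not invalidate the derivation: the very same sequence of formulas witnesses $\Delta_0 \macube \varphi$. With that observation in place the argument is routine, and I would write it up as a short direct proof rather than as anything more elaborate.

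\begin{proof}
Suppose $\Delta\mocube\varphi$. By the Adequacy Theorem (Corollary~\ref{adecuacion-hbd2}), $\Delta\macube\varphi$; fix a derivation $\psi_1,\ldots,\psi_n=\varphi$ of $\varphi$ from $\Delta$ in $\hachecube$. Let $\Delta_0$ be the (finite) set of those $\psi_i$ that belong to $\Delta$. Since the inference rules \emepe, {\bf ($\pt$-In1)} and {\bf ($\pt$-In2)} each have finitely many antecedents and their side conditions concern only the formulas appearing in the derivation, the same sequence $\psi_1,\ldots,\psi_n$ is a derivation of $\varphi$ from $\Delta_0$ in $\hachecube$; that is, $\Delta_0\macube\varphi$. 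By Soundness (Theorem~\ref{sound-qbd2}), $\Delta_0\mocube\varphi$, and $\Delta_0$ is a finite subset of $\Delta$.
\end{proof}
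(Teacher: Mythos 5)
Your proof is correct and is exactly the argument the paper intends: the paper's own proof is the one-line remark that compactness follows from the adequacy theorem together with the fact that the Hilbert calculus $\hachecube$ is finitary by definition, which is precisely your completeness--extract-finite-premise-set--soundness chain. You have merely spelled out the "usual proof" that the paper leaves implicit, including the harmless observation about the side conditions on the quantifier rules.
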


\begin{proof} The usual proof once we have established the adequacy of a first-order semantics w.r.t. a compact proof system. Our deductive system {\em à la Hilbert} $\hachecube$ is compact by definition.
\end{proof}

\end{document}